\newdimen\unitsize\setlength{\unitsize}{0.25cm}
 \newtheorem{theorem}{Theorem}[section]
 \newtheorem{lemma}[theorem]{Lemma}
 \newtheorem{corollary}[theorem]{Corollary}
 \newtheorem{prop}[theorem]{Proposition}
 \newtheorem*{theoremA}{Theorem A}
\newtheorem*{theorem*}{Theorem}
\theoremstyle{definition}
\newtheorem{example}[theorem]{Example}
 \newtheorem{remark}[theorem]{Remark}
\numberwithin{equation}{section}
\newcommand{\C}{\mathbb{C}}
\newcommand{\Z}{\mathbb{Z}}
\newcommand{\GL}{\operatorname{GL}}
\newcommand{\X}{\mathscr{X}}
\renewcommand{\Z}{\mathscr{Z}}
\newcommand{\codim}{\operatorname{codim}}
\newcommand{\bS}{\mathbb{S}}
\renewcommand{\a}{\underline{a}}
\renewcommand{\b}{\underline{b}}
\newcommand{\g}{\mathfrak{g}}
\newcommand{\mf}{\mathfrak}
\renewcommand{\aa}{\underline{a}}
\newcommand{\tn}{\textnormal}
\newcommand{\defi}[1]{{\upshape\sffamily #1}}
\newcommand{\ol}{\overline}
\newcommand{\lra}{\longrightarrow}
\newcommand{\mc}[1]{\mathcal{#1}}
\newcommand{\GG}{X}
\newcommand{\Sv}[1]{Z_{#1}}
\newcommand{\Sc}[1]{O_{#1}}
\newcommand{\D}{\mathcal{D}}
\newcommand{\IC}{IC}
\newcommand{\weyl}{\prescript{I}{}{W}}
\newcommand{\DD}{\mathbb{D}}
\newcommand{\lc}[1]{\mathscr{H}_{#1}}
\author{Michael Perlman}
\address{Department of Mathematics, The University of Alabama, Tuscaloosa, AL 35401}
\email{mperlman@ua.edu}
\title{Local cohomology with support in Schubert varieties}
\begin{document}
\maketitle

\begin{abstract}
This paper is concerned with local cohomology sheaves on generalized flag varieties supported in closed Schubert varieties, which carry natural structures as (mixed Hodge) $\mathcal{D}$-modules. We employ Kazhdan--Lusztig theory and Saito's theory of mixed Hodge modules to describe a general strategy to calculate the simple composition factors, Hodge filtration, and weight filtration on these modules. Our main tool is the Grothendieck--Cousin complex, introduced by Kempf, which allows us to relate the local cohomology modules in question to parabolic Verma modules over the corresponding Lie algebra. We show that this complex underlies a complex of mixed Hodge modules, and is thus endowed with Hodge and weight filtrations. As a consequence, strictness implies that computing cohomology commutes with taking associated graded with respect to both of these filtrations. We execute this strategy to calculate the composition factors and weight filtration for Schubert varieties in the  Grassmannian, in particular showing that the weight filtration is controlled by the admissible augmented Dyck patterns of Raicu--Weyman. As an application, upon restriction to the opposite big cell, we recover the  composition factors and weight filtration on local cohomology with support in generic determinantal varieties. 
\end{abstract}

\section{Introduction}

\subsection{Overview}\label{sec:over} A general problem in commutative algebra and algebraic geometry is to calculate the local cohomology sheaves $\mathscr{H}^q_Z(\mc{O}_X)$ associated to a smooth complex variety $X$ and closed subvariety $Z\subseteq X$. In recent past, a great deal of progress has been made by taking advantage of deeper structure on these sheaves, namely that of holonomic $\D_X$-modules, where $\D_X$ is the sheaf of algebraic differential operators on $X$. This strong finiteness property has facilitated numerous new calculations, especially in the presence of a group action. In particular, when $X$ has finitely many orbits under the action of a linear algebraic group and $Z$ is an orbit closure, the theory of equivariant $\D_X$-modules has provided a fruitful framework to investigate the modules $\mathscr{H}^q_Z(\mc{O}_X)$ (see \cite{categories}). A well-understood example is the case when $X$ is a space of matrices, and $Z$ is a determinantal variety (see \cite{raicu2016local, iterated, MHM}). More recently, the extra layer of Saito's theory of mixed Hodge modules \cite{saito90} has provided new insights as to how local cohomology detects and measures singularities of $Z$ (see \cite{MP1, MP4}).

Another context in which equivariant $\D_X$-modules appear is in geometric representation theory, for instance  in the study of composition factor multiplicities of Verma modules over simple Lie algebras (see \cite{htt}), which is the subject of the Kazhdan--Lusztig theorem. In particular, the Beilinson--Bernstein theorem \cite{BB} identifies, via the global sections functor, Borel equivariant $\D_X$-modules on flag varieties with highest weight modules over the corresponding Lie algebra. Here, intersection cohomology modules associated to Schubert cells correspond to irreducible representations with trivial central character, and (Verdier duals of) push-forwards of structure sheaves on Schubert cells correspond to Verma modules. In commutative algebra and algebraic geometry, we would refer to these geometric Verma modules  as the local cohomology modules $\mathscr{H}^q_{O_w}(\mathcal{O}_X)$ with support in the (locally closed) Schubert cells $O_w$. Via the theory of mixed Hodge modules, the weight filtration on $\mathscr{H}^q_{O_w}(\mathcal{O}_X)$ induces the Jantzen filtration on the corresponding Verma module \cite{jantzen}. 

The purpose of this article is to reconcile the two theories above by calculating the local cohomology modules $\mathscr{H}^q_{Z_w}(\mc{O}_X)$ with support in closed Schubert varieties $Z_w=\ol{O}_w$ in flag varieties $X=G/P$, where $G$ is a complex simple linear algebraic group and $P$ is a parabolic subgroup. Such modules are $B$-equivariant holonomic $\D_X$-modules, where $B$ is a Borel subgroup with $B\subseteq P\subseteq G$. Each local cohomology module $\mathscr{H}^q_{Z_w}(\mc{O}_X)$ is finite length over $\D_X$ and, since Schubert cells are simply connected, has composition factors among $\mathcal{L}(Z_v,X)$, where $Z_v\subseteq Z_w$ and $\mathcal{L}(Z_v,X)$ is the intersection cohomology $\D_X$-module associated to the trivial local system on the Schubert cell $O_v$. These local cohomology modules are functorially endowed with structures as mixed Hodge modules, and in particular are endowed with two increasing filtrations of geometric origin: (1) the Hodge filtration $F_{\bullet}(\mathscr{H}^q_{Z_w}(\mc{O}_X))$, an infinite filtration by coherent $\mathcal{O}_X$-modules, and (2) the weight filtration $W_{\bullet}(\mathscr{H}^q_{Z_w}(\mc{O}_X))$, a finite filtration by holonomic $\D_X$-modules. In this work, we provide a  framework to determine the Hodge and weight filtrations on $\mathscr{H}^q_{Z_w}(\mc{O}_X)$ from those on the geometric (dual) Verma modules $\mathscr{H}^q_{O_v}(\mathcal{O}_X)$. Our main result in this direction is Theorem \ref{prop:degen}, which we now summarize.

Let $W^P$ be the relative Weyl group associated to $P$, and let $\ell$ denote the length function on $W^P$. As the Schubert cells $\{O_w\}_{w\in W^P}$ give an affine paving of $X$, the \defi{Grothendieck--Cousin complex} provides a right resolution of $\mc{O}_X$ in the category of $\D_X$-modules:
$$
\mc{GC}^{\bullet}:\quad 0  \longrightarrow  \mathscr{H}^0_{O_{w_{\circ}(P)}}(\mc{O}_X)\longrightarrow \bigoplus_{\ell(v)=d-1}\mathscr{H}^1_{O_v}(\mc{O}_X) \longrightarrow \cdots \longrightarrow \bigoplus_{\ell(v)=1}\mathscr{H}^{d-1}_{O_v}(\mc{O}_X)\longrightarrow \mathscr{H}^{d}_{O_{e}}(\mc{O}_X)\longrightarrow 0,
$$
where $e$ is the identity and $w_{\circ}(P)$ is the longest element in $W^P$, of length $d=d_X$. Therefore, for a Schubert variety $Z_w$, we have $\mathscr{H}^q_{Z_w}(\mc{O}_X)=\mathscr{H}^q_{Z_w}(\mc{GC}^{\bullet})$ for $q\geq 0$. Furthermore, using affineness of the inclusions of the Schubert cells, it can be shown that, for $0\leq q\leq d$, we have (see Theorem \ref{GCaComplex}):
\begin{equation}
\mathscr{H}^0_{Z_w}(\mc{GC}^{q})=\bigoplus_{v\leq w,\;\ell(v)=d-q}\mathscr{H}^q_{O_v}(\mc{O}_X),\quad\tn{and}\quad \mathscr{H}^q_{Z_w}(\mc{O}_X)=\mc{H}^q(\mathscr{H}^0_{Z_w}(\mc{GC}^{\bullet})).
\end{equation}
The complex $\mc{GC}^{\bullet}$ was introduced by Kempf \cite{kempf} in his study of BGG resolutions of highest weight modules over simple Lie algebras. In particular, via the Beilinson--Bernstein theorem \cite{BB}, the terms $\mathscr{H}^{d-\ell(y)}_{O_y}(\mc{O}_X)$ correspond to duals of parabolic Verma modules. Since $\Gamma(X,\mc{O}_X)=\C$, it follows that $\mc{GC}^{\bullet}$ is a geometric realization of the dual of the BGG resolution of the trivial representation. 

For $w\in W^P$, we write $\mc{GC}_w^{\bullet}=\mathscr{H}^0_{Z_w}(\mc{GC}^{\bullet})$, which by the discussion above satisfies $\mc{H}^q(\mc{GC}^{\bullet}_w)=\mathscr{H}^q_{Z_w}(\mc{O}_X)$ for $q\geq 0$. The idea to use $\mc{GC}_w^{\bullet}$ to study local cohomology is not new. For instance, Raben-Pedersen \cite{raben} uses $\mc{GC}_w^{\bullet}$ to investigate qualitative properties of $\mathscr{H}^q_{Z_w}(\mc{O}_X)$ and local cohomological dimension in the case when $X=G/B$ or $X=\operatorname{Gr}(k,n)$, though a clear picture is not obtained. The complex $\mc{GC}_w^{\bullet}$ has also been examined in positive characteristic, see \cite{KL, raben, lauritzen}. Our approach to get at the cohomology of $\mc{GC}_w^{\bullet}$ is to enhance it to a complex in the category of mixed Hodge modules, and use \textit{strictness} to obtain degeneration of the spectral sequences associated to the Hodge and weight filtrations. The following is a consequence of Theorem \ref{prop:degen}.
\begin{theoremA}
For $w\in W^P$ the complex $\mc{GC}^{\bullet}_w$ 	underlies a complex of mixed Hodge modules, and hence is endowed with a Hodge filtration $F_{\bullet}(\mc{GC}^{\bullet}_w)$ by coherent $\mc{O}_X$-modules and a weight filtration $W_{\bullet}(\mc{GC}^{\bullet}_w)$ by holonomic $\D_X$-modules. Furthermore, taking cohomology commutes with taking associated graded with respect to these filtrations, i.e. for all $p\in \mathbb{Z}$ and $q\geq 0$ we have isomorphisms
$$
\tn{gr}^F_p(\lc{Z_w}^{q}(\mc{O}_X^H))\cong \mc{H}^q(\tn{gr}^F_p(\mathcal{GC}_w^{\bullet})),\quad\tn{and}\quad\tn{gr}^W_p(\lc{Z_w}^{q}(\mc{O}_X^H))\cong \mc{H}^q(\tn{gr}^W_p(\mathcal{GC}_w^{\bullet})).
$$
\end{theoremA}

In particular, in order to calculate the composition factors and weight filtration on $\mathscr{H}^q_{Z_w}(\mc{O}_X)$, instead of working with the complicated complex $\mc{GC}_w^{\bullet}$, one can study the cohomology of the complexes $\tn{gr}^W_p(\mathcal{GC}_w^{\bullet})$, which are complexes of \textit{semi-simple} $\D_X$-modules. Furthermore, via the Beilinson--Bernstein theorem, one can first  take global sections and instead work with parabolic Verma modules and irreducible representations of the corresponding Lie algebra (see Corollary \ref{globalGC}). The composition factors and weight filtrations on the parabolic Verma modules are known, and are dictated by inverse parabolic Kazhdan--Lusztig polynomials (see \cite{CC, KT}). Recent work of Davis--Vilonen \cite[Theorem 1.1]{davis} implies that the global sections functor is also exact with respect to the Hodge filtration. 

After establishing the above framework in Sections \ref{sec:2} and \ref{sec:3}, we carry out the following:
\begin{itemize}
\item In Section \ref{sec:Grass} we investigate the weight filtration on $\mc{GC}_w^{\bullet}$ in the case when $X$ is the classical Grassmannian $\operatorname{Gr}(k,n)$. We employ Kazhdan--Lusztig theory and the known submodule lattice of the relevant parabolic Verma modules to obtain the composition factors and weight filtration on $\lc{Z_w}^{q}(\mc{O}_X^H)$. Our results in this direction are stated in 	Section \ref{sec:state}. Remarkably, the weight filtration is controlled by the admissible augmented Dyck patterns of Raicu--Weyman \cite{thick}, which also control the syzygies of determinantal thickenings \cite{amy}. In Section \ref{amyremark}, we explain the connection between local cohomology and syzygies via the BGG correspondence and super duality, showing that part of our Theorem \ref{mainthm} is equivalent to \cite[Theorem 3.7]{amy}.\
\item In Section \ref{sec:det} we restrict these calculations to the opposite big cell of the Grassmannian to recover the composition factors and weight filtration on local cohomology with support in determinantal varieties of generic matrices. This gives a new combinatorial interpretation of the composition factor formula \cite{raicu2016local} and the weight filtration formula \cite{MHM}.
\end{itemize}

At the end of Section \ref{sec:maincalc} we discuss how one might approach a similar analysis for Schubert varieties in the remaining Hermitian symmetric spaces.

\subsection{Statement of results for the Grassmannian} \label{sec:state}

For positive integers $k<n$ we consider the Grassmannian $\GG=\operatorname{Gr}(k,V)$ of $k$-dimensional subspaces of an $n$-dimensional complex vector space $V$. We fix a complete flag $\mc{V}_{\bullet}$ of vector subspaces in $V$
$$
0\subset V_1\subset \cdots \subset V_{n-1}\subset V_n=V,
$$
with $\dim V_i=i$. Given a partition $\a=(a_1,\cdots,a_k)$ satisfying
\begin{equation}\label{fitInRectangle}
n-k\geq a_1\geq a_2\geq \cdots \geq a_k\geq 0,
\end{equation}
we define the Schubert variety $\Sv{\a}\subseteq \GG$ as the closed subset
$$
\Sv{\a}=\big\{W\in \GG \mid \dim(V_{k+a_i+1-i}\cap W)\geq k+1-i\;\;\tn{for all $i$}\big\}.
$$
It is the closure of the Schubert cell $\Sc{\aa}$. This variety has dimension
$$
\dim \Sv{\a}=|\a|=a_1+\cdots +a_k,
$$
and given two partitions $\a$, $\b$ satisfying (\ref{fitInRectangle}) we have
$$
\Sv{\b}\subseteq \Sv{\a} \quad \iff \quad \b \subseteq \a,
$$
where the latter notation means that $b_i\leq a_i$ for all $1\leq i \leq k$.

We identify a partition $\a=(a_1,a_2,\cdots ,a_k)$ with its Young diagram
$$
\a=(a_1,a_2,\cdots ,a_k) \quad \leftrightarrow \quad \{(i,j)\in \mathbb{Z}^2_+\mid 1\leq i \leq k,\; 1\leq j\leq a_i\},
$$
which we picture as a left-justified collection of boxes, with $a_i$ boxes in row $i$. For example, the partition $\aa=(5,4,2,2)$ corresponds to the following Young diagram:\\

\begin{center}
\begin{minipage}{.3\textwidth}
\centering
$\underline{a}=(5,4,2,2)$	
\end{minipage} $\longleftrightarrow$
\begin{minipage}{.3\textwidth}
\centering	
\begin{tikzpicture}[x=\unitsize,y=\unitsize,baseline=0]
\tikzset{vertex/.style={}}%
\tikzset{edge/.style={  thick}}%
\draw[edge] (0,0) -- (4,0);
\draw[edge] (0,2) -- (4,2);
\draw[edge] (0,4) -- (8,4);
\draw[edge] (0,6) -- (10,6);
\draw[edge] (0,8) -- (10,8);
\draw[edge] (0,0) -- (0,8);
\draw[edge] (2,0) -- (2,8);
\draw[edge] (4,0) -- (4,8);
\draw[edge] (6,4) -- (6,8);
\draw[edge] (8,4) -- (8,8);
\draw[edge] (10,6) -- (10,8);
\end{tikzpicture}%
\end{minipage}
\end{center}

\medskip

A path $P$ in $\mathbb{Z}^2_+$ is a collection of boxes
$$
P=\{ (i_1,j_1),(i_2,j_2),\cdots ,(i_t, j_t)\},
$$
satisfying the following for all $1\leq s \leq t-1$: 
$$
(i_{s+1},j_{s+1})=(i_s-1, j_s)\quad \tn{or} \quad (i_{s+1},j_{s+1})=(i_s, j_s+1).
$$

In other words, a path is a connected collection of boxes obtained by starting at some box $(i_1,j_1)$, and walking in any combination of North and East. We write $|P|=t$ for the length of $P$, which is the number of boxes in $P$. 
The path $P$ is called a \defi{Dyck path} if for some $d\geq 0$ we have 
\begin{itemize}
\item $i_1+j_1=i_t+j_t=d$,
\item $i_s+j_s\leq d$ for all $1\leq s\leq t$.
\end{itemize}
In words, $(i_1,j_1)$ and $(i_t,j_t)$ lie on the same antidiagonal of $\mathbb{Z}^2_+$, and no intermediate box $(i_s,j_s)$ lies below this anti-diagonal. In this case, we say that $P$ is a Dyck path of level $d$, and that $(i_1,j_1)$ and $(i_t,j_t)$ lie on the $d$-th antidiagonal. The following three paths are examples of Dyck paths: \\

\begin{center}
\begin{minipage}{.18\textwidth}
\centering 
\begin{tikzpicture}[x=\unitsize,y=\unitsize,baseline=0]
\tikzset{vertex/.style={}}%
\tikzset{edge/.style={  thick}}%
\draw[dotted] (0,0) -- (14,0);
\draw[dotted] (0,2) -- (14,2);
\draw[dotted] (0,4) -- (14,4);
\draw[dotted] (0,6) -- (14,6);
\draw[dotted] (0,8) -- (14,8);
\draw[dotted] (0,10) -- (14,10);
\draw[dotted] (2,-2) -- (2,12);
\draw[dotted] (4,-2) -- (4,12);
\draw[dotted] (6,-2) -- (6,12);
\draw[dotted] (8,-2) -- (8,12);
\draw[dotted] (10,-2) -- (10,12);
\draw[dotted] (12,-2) -- (12,12);
\draw[dotted] (2,-2) -- (14,10);
\draw[red, line width=6pt] (3,0) -- (3,3) -- (5,3) -- (5,5) -- (7,5) -- (7,9) -- (12,9);
\end{tikzpicture}
\end{minipage}
\quad\quad\quad   
\begin{minipage}{.18\textwidth}
\centering 
\begin{tikzpicture}[x=\unitsize,y=\unitsize,baseline=0]
\tikzset{vertex/.style={}}%
\tikzset{edge/.style={  thick}}%
\draw[red, line width=6pt]  (5,2) -- (5,7) -- (9,7) -- (9, 9) -- (12,9) ;
\draw[dotted] (0,0) -- (14,0);
\draw[dotted] (0,2) -- (14,2);
\draw[dotted] (0,4) -- (14,4);
\draw[dotted] (0,6) -- (14,6);
\draw[dotted] (0,8) -- (14,8);
\draw[dotted] (0,10) -- (14,10);
\draw[dotted] (2,-2) -- (2,12);
\draw[dotted] (4,-2) -- (4,12);
\draw[dotted] (6,-2) -- (6,12);
\draw[dotted] (8,-2) -- (8,12);
\draw[dotted] (10,-2) -- (10,12);
\draw[dotted] (12,-2) -- (12,12);
\draw[dotted] (2,-2) -- (14,10);
\end{tikzpicture}
\end{minipage}
\quad\quad \quad 
\begin{minipage}{.18\textwidth}
\centering 
\begin{tikzpicture}[x=\unitsize,y=\unitsize,baseline=0]
\tikzset{vertex/.style={}}%
\tikzset{edge/.style={  thick}}%
\draw[dotted] (0,0) -- (14,0);
\draw[dotted] (0,2) -- (14,2);
\draw[dotted] (0,4) -- (14,4);
\draw[dotted] (0,6) -- (14,6);
\draw[dotted] (0,8) -- (14,8);
\draw[dotted] (0,10) -- (14,10);
\draw[dotted] (2,-2) -- (2,12);
\draw[dotted] (4,-2) -- (4,12);
\draw[dotted] (6,-2) -- (6,12);
\draw[dotted] (8,-2) -- (8,12);
\draw[dotted] (10,-2) -- (10,12);
\draw[dotted] (12,-2) -- (12,12);
\draw[color=red,fill=red] (6.5,4.5) rectangle (7.5,5.5);
\draw[dotted] (2,-2) -- (14,10);
\end{tikzpicture}
\end{minipage}
\end{center}

\medskip

The next two paths fail to be Dyck paths:\\
\begin{center}
\begin{minipage}{.18\textwidth}
\centering 
\begin{tikzpicture}[x=\unitsize,y=\unitsize,baseline=0]
\tikzset{vertex/.style={}}%
\tikzset{edge/.style={  thick}}%
\draw[dotted] (0,0) -- (14,0);
\draw[dotted] (0,2) -- (14,2);
\draw[dotted] (0,4) -- (14,4);
\draw[dotted] (0,6) -- (14,6);
\draw[dotted] (0,8) -- (14,8);
\draw[dotted] (0,10) -- (14,10);
\draw[dotted] (2,-2) -- (2,12);
\draw[dotted] (4,-2) -- (4,12);
\draw[dotted] (6,-2) -- (6,12);
\draw[dotted] (8,-2) -- (8,12);
\draw[dotted] (10,-2) -- (10,12);
\draw[dotted] (12,-2) -- (12,12);
\draw[red, line width=6pt] (3,0) -- (3,3) -- (7,3) -- (7,7) -- (10,7) ;
\draw[dotted] (2,-2) -- (14,10);
\end{tikzpicture}
\end{minipage}
\quad\quad\quad
\begin{minipage}{.18\textwidth}
\centering 
\begin{tikzpicture}[x=\unitsize,y=\unitsize,baseline=0]
\tikzset{vertex/.style={}}%
\tikzset{edge/.style={  thick}}%
\draw[dotted] (0,0) -- (14,0);
\draw[dotted] (0,2) -- (14,2);
\draw[dotted] (0,4) -- (14,4);
\draw[dotted] (0,6) -- (14,6);
\draw[dotted] (0,8) -- (14,8);
\draw[dotted] (0,10) -- (14,10);
\draw[dotted] (2,-2) -- (2,12);
\draw[dotted] (4,-2) -- (4,12);
\draw[dotted] (6,-2) -- (6,12);
\draw[dotted] (8,-2) -- (8,12);
\draw[dotted] (10,-2) -- (10,12);
\draw[dotted] (12,-2) -- (12,12);
\draw[red, line width=6pt] (3,0) -- (3,7) -- (8,7)  ;
\draw[dotted] (2,-2) -- (14,10);
\end{tikzpicture}
\end{minipage}
\end{center}
\medskip

The left path fails the second condition of a Dyck path, and the right path fails the first condition.

Observe that the properties of a Dyck path $P$ imply that $|P|$ is odd, $(i_2,j_2)=(i_1-1,j_1)$, and $(i_t,j_t)=(i_{t-1},j_{t-1}+1)$, i.e. a Dyck path starts by moving North and ends by moving East.

Following Raicu--Weyman \cite{thick} we define an \defi{augmented Dyck path}  as a pair $\tilde{P}=(P,B)$ where $P$ is a Dyck path and $B$ is a set of boxes, called the \defi{bullets} in $\tilde{P}$. The set $B$ is a disjoint union of two sets of boxes $B = B_{head} \sqcup B_{tail}$, where 
\begin{equation}\label{defBullets}
\begin{aligned}
B_{head}= \{(i_t,j_t+1), (i_t,j_t+2),\cdots,(i_t,j_t+u)\} &\mbox{ for some }u\geq 0,\mbox{ and} \\
B_{tail} = \{(i_1+1,j_1), (i_1+2,j_1), \cdots,(i_1+v,j_1)\} &\mbox{ for some }v\geq 0. \\
\end{aligned}
\end{equation}
In words, $B_{head}$ is a connected collection of boxes obtained from starting in the box directly East of $(i_t,j_t)$, and taking some number of steps East. Similarly, $B_{tail}$ is a connected collection of boxes obtained from starting in the box directly South of $(i_1,j_1)$, and taking some number of steps South.\\

\begin{center}
\begin{minipage}{.18\textwidth}
\centering 
\begin{tikzpicture}[x=\unitsize,y=\unitsize,baseline=0]
\tikzset{vertex/.style={}}%
\tikzset{edge/.style={  thick}}%
\draw[red, line width=6pt]  (3,2) -- (3,7) -- (7,7) -- (7, 9) -- (10,9) ;
\draw[dotted] (0,0) -- (14,0);
\draw[dotted] (0,2) -- (14,2);
\draw[dotted] (0,4) -- (14,4);
\draw[dotted] (0,6) -- (14,6);
\draw[dotted] (0,8) -- (14,8);
\draw[dotted] (0,10) -- (14,10);
\draw[dotted] (2,-2) -- (2,12);
\draw[dotted] (4,-2) -- (4,12);
\draw[dotted] (6,-2) -- (6,12);
\draw[dotted] (8,-2) -- (8,12);
\draw[dotted] (10,-2) -- (10,12);
\draw[dotted] (12,-2) -- (12,12);
\draw[dotted] (0,-2) -- (12,10);
\draw[fill=green] (11,9) circle [radius=0.5];
\draw[fill=green] (13,9) circle [radius=0.5];
\draw[fill=green] (3,1) circle [radius=0.5];
\draw[fill=green] (3,-1) circle [radius=0.5];
\end{tikzpicture}
\end{minipage}
\end{center}
\medskip

Given a partition $\a$, an \defi{augmented Dyck pattern}  in $\aa$ is a collection 
$$
\mathbb{D} = (D_1,D_2,\cdots,D_r;\mathbb{B}),
$$
 where 
\begin{itemize}
 \item each $D_i\subseteq \aa$ is a Dyck path and $\mathbb{B} \subseteq \aa$ is a finite set of boxes;
 \item the sets $D_1,D_2,\cdots,D_r$ and $\mathbb{B}$ are pairwise disjoint;
 \item $\mathbb{B}$ can be expressed as a union
 \begin{equation}\label{bulletUnion}
 \mathbb{B} = B_1 \cup B_2 \cup \cdots \cup B_r
 \end{equation}
 in such a way that $(D_i,B_i)$ is an augmented Dyck path for every $i=1,\cdots,r$.
\end{itemize}

\noindent We emphasize that the expression (\ref{bulletUnion}) need not be unique, and the sets $B_i$ are not necessarily disjoint. We write $|\mathbb{D}|=r$ for the number of Dyck paths in $\mathbb{D}$ (so we are not counting elements of $\mathbb{B}$). We often refer to augmented Dyck patterns simply as ``Dyck patterns". 

 We define the support of $\mathbb{D}$ by
\begin{equation}\label{eq:def-supp}
 \operatorname{supp}(\mathbb{D}) = D_1 \cup D_2 \cup \cdots \cup D_r \cup \mathbb{B}.
\end{equation}
If $\aa$ is a partition and $\DD$ is an augmented Dyck pattern in $\aa$, we write 
\begin{equation}\label{defDa}
\a^{\mathbb{D}}=\aa \setminus \operatorname{supp}(\mathbb{D}).
\end{equation}

Raicu--Weyman \cite{thick} define a Dyck pattern $\DD=(D_1,D_2,\cdots,D_r;\mathbb{B})$ in $\aa$ to be \defi{admissible} if the following conditions are satisfied:
\begin{enumerate}
\item $a^{\mathbb{D}}$ is a set of boxes corresponding to a partition,
\item (Covering Condition) for every $p\neq q$, if there exists a box $(i',j')\in D_q$ which is located directly North, Northwest, or West from a box $(i,j)\in D_p$, then every box located directly North, Northwest, or West from a box in $D_p$ must also belong to $D_p$ or $D_q$,
\item there is no box in $\mathbb{B}$ which is located directly North, Northwest, or West from a box in any Dyck path $D_p$.	
\end{enumerate}

The following are four examples of admissible Dyck patterns in $\a=(5,4,3,2,2)$:\\

\begin{center}
\begin{minipage}{.18\textwidth}
\centering 
\begin{tikzpicture}[x=\unitsize,y=\unitsize,baseline=0]
\tikzset{vertex/.style={}}%
\tikzset{edge/.style={thick}}%
\draw[edge] (0,0) -- (4,0);
\draw[edge] (0,2) -- (4,2);
\draw[edge] (0,4) -- (6,4);
\draw[edge] (0,6) -- (8,6);
\draw[edge] (0,8) -- (10,8);
\draw[edge] (0,10) -- (10,10);
\draw[edge] (0,0) -- (0,10);
\draw[edge] (2,0) -- (2,10);
\draw[edge] (4,0) -- (4,10);
\draw[edge] (6,4) -- (6,10);
\draw[edge] (8,6) -- (8,10);
\draw[edge] (10,8) -- (10,10);
\draw[red, line width=6pt] (3,2) -- (3,5) --(6,5) ;
\draw[red, line width=6pt] (1,0) -- (1,7) --(7,7) -- (7,9) -- (10,9) ;
\draw[fill=green] (3,1) circle [radius=0.5];
\end{tikzpicture}
\end{minipage}
\quad\quad
\begin{minipage}{.18\textwidth}
\centering 
\begin{tikzpicture}[x=\unitsize,y=\unitsize,baseline=0]
\tikzset{vertex/.style={}}%
\tikzset{edge/.style={thick}}%
\draw[edge] (0,0) -- (4,0);
\draw[edge] (0,2) -- (4,2);
\draw[edge] (0,4) -- (6,4);
\draw[edge] (0,6) -- (8,6);
\draw[edge] (0,8) -- (10,8);
\draw[edge] (0,10) -- (10,10);
\draw[edge] (0,0) -- (0,10);
\draw[edge] (2,0) -- (2,10);
\draw[edge] (4,0) -- (4,10);
\draw[edge] (6,4) -- (6,10);
\draw[edge] (8,6) -- (8,10);
\draw[edge] (10,8) -- (10,10);
\draw[red, line width=6pt] (3,2) -- (3,5) --(6,5) ;
\draw[red, line width=6pt] (1,2) -- (1,7) --(6,7) ;
\draw[fill=green] (7,7) circle [radius=0.5];
\draw[fill=green] (3,1) circle [radius=0.5];
\draw[fill=green] (1,1) circle [radius=0.5];
\end{tikzpicture}
\end{minipage}
\quad\quad
\begin{minipage}{.18\textwidth}
\centering 
\begin{tikzpicture}[x=\unitsize,y=\unitsize,baseline=0]
\tikzset{vertex/.style={}}%
\tikzset{edge/.style={  thick}}%
\draw[edge] (0,0) -- (4,0);
\draw[edge] (0,2) -- (4,2);
\draw[edge] (0,4) -- (6,4);
\draw[edge] (0,6) -- (8,6);
\draw[edge] (0,8) -- (10,8);
\draw[edge] (0,10) -- (10,10);
\draw[edge] (0,0) -- (0,10);
\draw[edge] (2,0) -- (2,10);
\draw[edge] (4,0) -- (4,10);
\draw[edge] (6,4) -- (6,10);
\draw[edge] (8,6) -- (8,10);
\draw[edge] (10,8) -- (10,10);
\draw[red, line width=6pt] (1,0) -- (1,3) --(4,3) ;
\draw[color=red,fill=red] (2.5,0.5) rectangle (3.5,1.5);
\draw[red, line width=6pt] (5,4) -- (5,7) --(7,7) -- (7,9) -- (10,9) ;
\end{tikzpicture}
\end{minipage}
\quad\quad
\begin{minipage}{.18\textwidth}
\centering 
\begin{tikzpicture}[x=\unitsize,y=\unitsize,baseline=0]
\tikzset{vertex/.style={}}%
\tikzset{edge/.style={  thick}}%
\draw[edge] (0,0) -- (4,0);
\draw[edge] (0,2) -- (4,2);
\draw[edge] (0,4) -- (6,4);
\draw[edge] (0,6) -- (8,6);
\draw[edge] (0,8) -- (10,8);
\draw[edge] (0,10) -- (10,10);
\draw[edge] (0,0) -- (0,10);
\draw[edge] (2,0) -- (2,10);
\draw[edge] (4,0) -- (4,10);
\draw[edge] (6,4) -- (6,10);
\draw[edge] (8,6) -- (8,10);
\draw[edge] (10,8) -- (10,10);
\draw[red, line width=6pt] (7,6) -- (7,9) --(10,9) ;
\draw[color=red,fill=red] (2.5,0.5) rectangle (3.5,1.5);
\draw[color=red,fill=red] (4.5,4.5) rectangle (5.5,5.5);
\end{tikzpicture}
\end{minipage}
\end{center}
\medskip

The following three patterns are not admissible:\\

\begin{center}
\begin{minipage}{.18\textwidth}
\centering 
\begin{tikzpicture}[x=\unitsize,y=\unitsize,baseline=0]
\tikzset{vertex/.style={}}%
\tikzset{edge/.style={  thick}}%
\draw[edge] (0,0) -- (4,0);
\draw[edge] (0,2) -- (4,2);
\draw[edge] (0,4) -- (6,4);
\draw[edge] (0,6) -- (8,6);
\draw[edge] (0,8) -- (10,8);
\draw[edge] (0,10) -- (10,10);
\draw[edge] (0,0) -- (0,10);
\draw[edge] (2,0) -- (2,10);
\draw[edge] (4,0) -- (4,10);
\draw[edge] (6,4) -- (6,10);
\draw[edge] (8,6) -- (8,10);
\draw[edge] (10,8) -- (10,10);
\draw[red, line width=6pt] (3,2) -- (3,5) --(6,5) ;
\draw[fill=green] (3,1) circle [radius=0.5];
\draw[red, line width=6pt] (1,4) -- (1,7) --(4,7) ;
\draw[fill=green] (1,1) circle [radius=0.5];
\draw[fill=green] (1,3) circle [radius=0.5];
\draw[fill=green] (7,7) circle [radius=0.5];
\draw[fill=green] (5,7) circle [radius=0.5];
\end{tikzpicture}
\end{minipage}
\quad\quad
\begin{minipage}{.18\textwidth}
\centering 
\begin{tikzpicture}[x=\unitsize,y=\unitsize,baseline=0]
\tikzset{vertex/.style={}}%
\tikzset{edge/.style={  thick}}%
\draw[edge] (0,0) -- (4,0);
\draw[edge] (0,2) -- (4,2);
\draw[edge] (0,4) -- (6,4);
\draw[edge] (0,6) -- (8,6);
\draw[edge] (0,8) -- (10,8);
\draw[edge] (0,10) -- (10,10);
\draw[edge] (0,0) -- (0,10);
\draw[edge] (2,0) -- (2,10);
\draw[edge] (4,0) -- (4,10);
\draw[edge] (6,4) -- (6,10);
\draw[edge] (8,6) -- (8,10);
\draw[edge] (10,8) -- (10,10);
\draw[red, line width=6pt] (3,2) -- (3,5) --(6,5) ;
\draw[red, line width=6pt] (1,2) -- (1,7) --(6,7) ;
\draw[red, line width=6pt] (7,6) -- (7,9) --(10,9) ;
\draw[fill=green] (3,1) circle [radius=0.5];
\draw[fill=green] (1,1) circle [radius=0.5];
\end{tikzpicture}
\end{minipage}
\quad\quad
\begin{minipage}{.18\textwidth}
\centering 
\begin{tikzpicture}[x=\unitsize,y=\unitsize,baseline=0]
\tikzset{vertex/.style={}}%
\tikzset{edge/.style={  thick}}%
\draw[edge] (0,0) -- (4,0);
\draw[edge] (0,2) -- (4,2);
\draw[edge] (0,4) -- (6,4);
\draw[edge] (0,6) -- (8,6);
\draw[edge] (0,8) -- (10,8);
\draw[edge] (0,10) -- (10,10);
\draw[edge] (0,0) -- (0,10);
\draw[edge] (2,0) -- (2,10);
\draw[edge] (4,0) -- (4,10);
\draw[edge] (6,4) -- (6,10);
\draw[edge] (8,6) -- (8,10);
\draw[edge] (10,8) -- (10,10);
\draw[red, line width=6pt] (5,4) -- (5,7) --(7,7) -- (7,9) -- (10,9) ;
\draw[red, line width=6pt] (3,6) -- (3,9) -- (6,9) ;
\draw[fill=green] (3,3) circle [radius=0.5];
\draw[fill=green] (3,5) circle [radius=0.5];
\draw[color=red,fill=red] (2.5,0.5) rectangle (3.5,1.5);
\end{tikzpicture}
\end{minipage}

\end{center}

\medskip

The covering condition is Rule II in \cite[Section 3.1]{path}. This condition may also be described in terms of boundary strip removals from skew diagrams \cite[Section 2]{brenti2}.

We distinguish between Dyck patterns with bullets and those without bullets as follows:
\begin{align}
& Dyck^{\bullet}(\a)=\{\textnormal{admissible augmented Dyck patterns in $\a$}\},\\
& Dyck(\a)=\{\DD=(D_1,D_2,\cdots,D_r;\mathbb{B})\in Dyck^{\bullet}(\a)\mid \mathbb{B}=\emptyset\}.	
\end{align}
We write $\mathbb{D}=(D_1,\cdots,D_r)$ when $\mathbb{B}=\emptyset$. The empty pattern $\mathbb{D}=\emptyset$ satisfies $\a^{\mathbb{D}}=\a$ and belongs to both $Dyck^{\bullet}(\a)$ and $Dyck(\a)$.

With these combinatorial definitions, we now prepare to state the main result. Let $d_X=\dim X=k(n-k)$. Given a partition $\aa$ and $q\in \mathbb{Z}_{\geq 0}$ we consider the following sets of Dyck patterns:
\begin{equation}\label{eq:patternslc}
\mc{A}(\aa; q)=\{ \mathbb{D} = (D_1,D_2,\cdots,D_r;\mathbb{B})\in Dyck^{\bullet}(\a) \mid |D_i|\geq 3\; \tn{for $i=1,\cdots ,r$},\; |\mathbb{B}|=q+|\a|-d_{X}\}.
\end{equation}
For $p\in \mathbb{Z}_{\geq 0}$ we consider the subset of $\mc{A}(\aa; q)$ consisting of patterns with $p-q-d_X$ paths:
\begin{equation}
\mathcal{A}_p(\aa;q)=\{\mathbb{D}\in \mc{A}(\aa; q)\mid |\mathbb{D}|=p-q-d_X\}.
\end{equation}
For the following statement, we write $\mc{L}(\b)=\mc{L}(\Sv{\b},X)$ for the intersection cohomology $\D_X$-module associated to the trivial local system on the Schubert cell $\Sc{\b}$. For $k\in \mathbb{Z}$, the Hodge modules $IC^H_{\Sv{\b}}(k)$ provide a complete list of polarizable pure Hodge modules that may overlie $\mc{L}(\b)$ (see Section \ref{sec:D}). By the Beilinson--Bernstein theorem (see Section \ref{sec:loc}), the global sections functor is exact, and $\Gamma(X,\mc{L}(\b))$ is the irreducible $\mf{sl}_n(\C)$-representation $L(\b)$, whose highest weight is described in Section \ref{sec:young}.

\begin{theorem}\label{mainthm}
With the notation above, we have the following expression in the Grothendieck group of $\D_{\GG}$-modules (resp. representations of $\mathfrak{sl}_{n}(\C))$ for $q\geq 0$:
$$
\big[ \lc{Z_{\a}}^{q}(\mc{O}_{\GG})\big]=\sum_{\mathbb{D}\in \mc{A}(\a;q)} \left[\mathcal{L}(\a^{\mathbb{D}})\right],\quad\tn{and}\quad \big[H^q_{Z_{\a}}(\GG,\mc{O}_{\GG})\big]=\sum_{\mathbb{D}\in \mc{A}(\a;q)} \left[L(\a^{\mathbb{D}})\right].
$$
Furthermore, the following is true about the weight filtration $W_{\bullet}$ for $p\in \mathbb{Z}$:
$$
\tn{gr}^W_p\lc{Z_{\a}}^q(\mc{O}^H_{\GG})=\bigoplus_{\mathbb{D}\in \mathcal{A}_p(\aa;q)} IC^H_{Z_{\a^{\mathbb{D}}}}\left(\frac{\left(\left|\a^{\mathbb{D}}\right|-p\right)}{2}\right).
$$
	
\end{theorem}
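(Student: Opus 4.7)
The plan is to apply the starred theorem above, which reduces the problem to computing the cohomology of $\tn{gr}^W_p(\mc{GC}_\a^\bullet)$, a complex of semi-simple $\D_X$-modules, and then to execute this cohomology computation using an explicit combinatorial description of parabolic Kazhdan-Lusztig polynomials in the Grassmannian case. By the exactness of Beilinson-Bernstein (Corollary \ref{globalGC}), after global sections this becomes a complex of $\mf{sl}_n(\C)$-representations built from irreducibles $L(\gamma)$, so I can work interchangeably on the geometric or representation-theoretic side.

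First, I would describe each term $\tn{gr}^W_p(\mc{GC}_\a^q) = \bigoplus_{\b\subseteq \a,\,|\b|=d_X-q}\tn{gr}^W_p(\lc{\Sc{\b}}^q(\mc{O}_X^H))$ explicitly. Each summand is the Jantzen graded piece of a dual parabolic Verma module, and for the Grassmannian these pieces are governed by inverse parabolic Kazhdan-Lusztig polynomials admitting an explicit Dyck-path formula (Lascoux-Sch\"utzenberger, Brenti, Zelevinsky). Concretely, $\IC^H_{\Sv{\gamma}}((|\gamma|-p)/2)$ should contribute to the $\b$-summand precisely when $\b\setminus\gamma$ is the support of some $\DD \in Dyck(\b)$ satisfying the covering condition, with $|\DD|=p-q-d_X$. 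This yields a bidecomposition of $\tn{gr}^W_p(\mc{GC}_\a^\bullet)$ indexed by subpartitions $\gamma \subseteq \a$ together with a Dyck-pattern structure on the complement.

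Second, I would identify the differentials. The differentials in $\mc{GC}_\a^\bullet$ arise from codimension-one Bruhat inclusions $\b'\lessdot \b$ of Schubert cells; after passing to $\tn{gr}^W$ and fixing $\gamma$, the component linking the $\IC^H_{\Sv{\gamma}}$-isotypic pieces of the $\b$- and $\b'$-summands is nonzero (up to a nonzero scalar, determined by a local computation near a codimension-one boundary stratum or by uniqueness of nontrivial extensions in $\mhm$) exactly when $\b\setminus\gamma$ and $\b'\setminus\gamma$ are Dyck configurations in the above sense related by extending or truncating a single path by one box at its head or tail. The isotypic subcomplex indexed by $\gamma$ therefore becomes a concrete combinatorial chain complex on Dyck patterns inside the skew shape $\a/\gamma$, independent of the ambient geometry.

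Finally, I would compute the cohomology of each such combinatorial subcomplex, and this is where the augmented Dyck patterns of Raicu-Weyman enter: the bullets $\mathbb{B}$ record the positions along the head and tail directions where the boundary map vanishes because the would-be neighboring pattern leaves the shape $\a$, while the rule $|D_i|\geq 3$, the covering condition, and the bullet-placement condition cut out the kernel modulo image. I would carry out the cancellation by induction on $|\a|$, using the inclusions $\Sv{\b}\subseteq \Sv{\a}$ and the functoriality of $\mc{GC}^{\bullet}$, identifying the surviving cohomology classes in bidegree $(p,q)$ with $\mc{A}_p(\a;q)$ and then summing over $p$ to obtain the composition factor formula. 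The main obstacle is this combinatorial cancellation — in particular tracking signs in the Grothendieck-Cousin differential and verifying that the admissibility conditions of Section 1.2 are precisely what survives — but the parallel with the Raicu-Weyman computation of syzygies of scheme-theoretic thickenings of determinantal varieties (Remark \ref{amyremark}) strongly indicates that the augmented-pattern formalism is the right bookkeeping device for exactly this kernel/image analysis.
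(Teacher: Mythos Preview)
Your overall strategy matches the paper: pass to $\tn{gr}^W_p(\mc{GC}_\a^\bullet)$ via strictness, describe the terms using the Dyck-pattern formula for the Loewy layers of the dual parabolic Verma modules, and then analyze the resulting semi-simple complex. The gap is in your second step, where you determine the differentials.

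You assert that the component map on the $L(\gamma)$-isotypic piece between the $\b$- and $\b'$-summands is nonzero ``exactly when $\b\setminus\gamma$ and $\b'\setminus\gamma$ are Dyck configurations \ldots\ related by extending or truncating a single path by one box at its head or tail,'' and you justify this by a ``local computation near a codimension-one boundary stratum'' or by ``uniqueness of nontrivial extensions in MHM.'' Neither justification suffices, and the description itself is not the correct one. For $\b'=\b\setminus\{(x,y)\}$, both $\tn{gr}^W_pN(\b)$ and $\tn{gr}^W_pN(\b')$ are multiplicity-free, so on each common summand $L(\gamma)$ the induced map is either zero or an isomorphism; deciding \emph{which} requires knowing the submodule lattice of $M(\b)$, not merely its Loewy filtration. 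The paper establishes this lattice (Theorem~\ref{thm:submod}) via super duality with Kac modules over $\mf{gl}_{k|\infty}$, and the upshot (Corollary~\ref{lem:keymorphismlemma}) is that the induced map on $L(\gamma)=L(\b^{\mathbb{D}'})=L((\b')^{\mathbb{D}})$ is nonzero precisely when $\mathbb{D}'=\mathbb{D}\cup\{(x,y)\}$, i.e.\ the removed corner appears in $\mathbb{D}'$ as a \emph{separate length-one path}, not as an extension of an existing path. The subtlety is exactly that there can exist pairs $(\mathbb{D}',\mathbb{D})$ with $\b^{\mathbb{D}'}=(\b')^{\mathbb{D}}$ and $|\mathbb{D}'|=|\mathbb{D}|+1$ yet $\mathbb{D}'\neq\mathbb{D}\cup\{(x,y)\}$, and for these the map vanishes; this is not visible from local geometry near a boundary stratum.

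Once the differential is correctly identified, the paper does not proceed by induction on $|\a|$. Instead it separates out the length-one paths: for each pair $(\b,\mathbb{D})$ with all paths of length $\geq 3$, the set of configurations obtained by adjoining arbitrary subsets $J\subseteq\mathcal{I}(\a,\b,\mathbb{D})$ of admissible singleton paths forms a Koszul subcomplex $\mathcal{K}(\a,\b,\mathbb{D})$, and $\tn{gr}^W_p(GC^\bullet_\a)$ is the direct sum of these (Theorem~\ref{Koszulsums}). Such a complex contributes to cohomology only when $\mathcal{I}(\a,\b,\mathbb{D})=\emptyset$, and the final lemma identifies this condition with membership in $\mathcal{A}_p(\a;q)$, the bullets $\mathbb{B}$ recording the skew shape $\a/\b$. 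So the augmented patterns arise not as surviving cocycles in an inductive cancellation but as the index set for one-term Koszul complexes.
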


\noindent In Section \ref{amyremark}, we explain a remarkable connection to syzygies of some equivariant ideals. Specifically, via super duality, an equivalence of categories between representations of Lie (super)algebras, our description of $[H^q_{Z_{\a}}(\GG,\mc{O}_{\GG})]$ is equivalent to Huang's theorem on linear strands of syzygies of determinantal thickenings \cite{amy}. 

We carry out a first example.

\begin{example}\label{ex1}
Let $X=\operatorname{Gr}(4,9)$ and $\underline{a}=(5,4,2,2)$. There are eight elements of $Dyck^{\bullet}(\a)$ for which all paths have length at least three:\\

\begin{center}
\begin{minipage}{.18\textwidth}
\centering 
\begin{tikzpicture}[x=\unitsize,y=\unitsize,baseline=0]
\tikzset{vertex/.style={}}%
\tikzset{edge/.style={  thick}}%
\draw[edge] (0,0) -- (4,0);
\draw[edge] (0,2) -- (4,2);
\draw[edge] (0,4) -- (8,4);
\draw[edge] (0,6) -- (10,6);
\draw[edge] (0,8) -- (10,8);
\draw[edge] (0,0) -- (0,8);
\draw[edge] (2,0) -- (2,8);
\draw[edge] (4,0) -- (4,8);
\draw[edge] (6,4) -- (6,8);
\draw[edge] (8,4) -- (8,8);
\draw[edge] (10,6) -- (10,8);
\end{tikzpicture}
\captionsetup{labelformat=empty}	
\captionof{figure}{$(5,4,2,2)$}	
\end{minipage}
\quad\quad
\begin{minipage}{.18\textwidth}
\centering 
\begin{tikzpicture}[x=\unitsize,y=\unitsize,baseline=0]
\tikzset{vertex/.style={}}%
\tikzset{edge/.style={  thick}}%
\draw[edge] (0,0) -- (4,0);
\draw[edge] (0,2) -- (4,2);
\draw[edge] (0,4) -- (8,4);
\draw[edge] (0,6) -- (10,6);
\draw[edge] (0,8) -- (10,8);
\draw[edge] (0,0) -- (0,8);
\draw[edge] (2,0) -- (2,8);
\draw[edge] (4,0) -- (4,8);
\draw[edge] (6,4) -- (6,8);
\draw[edge] (8,4) -- (8,8);
\draw[edge] (10,6) -- (10,8);
\draw[red, line width=6pt] (7,4) -- (7,7) -- (10,7) ;
\end{tikzpicture}
\captionsetup{labelformat=empty}	
\captionof{figure}{$(3,3,2,2)$}
\end{minipage}
\quad\quad
\begin{minipage}{.18\textwidth}
\centering 
\begin{tikzpicture}[x=\unitsize,y=\unitsize,baseline=0]
\tikzset{vertex/.style={}}%
\tikzset{edge/.style={  thick}}%
\draw[edge] (0,0) -- (4,0);
\draw[edge] (0,2) -- (4,2);
\draw[edge] (0,4) -- (8,4);
\draw[edge] (0,6) -- (10,6);
\draw[edge] (0,8) -- (10,8);
\draw[edge] (0,0) -- (0,8);
\draw[edge] (2,0) -- (2,8);
\draw[edge] (4,0) -- (4,8);
\draw[edge] (6,4) -- (6,8);
\draw[edge] (8,4) -- (8,8);
\draw[edge] (10,6) -- (10,8);
\draw[red, line width=6pt] (3,0) -- (3,5) --(7,5) -- (7,7) -- (10,7) ;
\end{tikzpicture}
\captionsetup{labelformat=empty}	
\captionof{figure}{$(3,1,1,1)$}
\end{minipage}
\quad\quad
\begin{minipage}{.18\textwidth}
\centering 
\begin{tikzpicture}[x=\unitsize,y=\unitsize,baseline=0]
\tikzset{vertex/.style={}}%
\tikzset{edge/.style={  thick}}%
\draw[edge] (0,0) -- (4,0);
\draw[edge] (0,2) -- (4,2);
\draw[edge] (0,4) -- (8,4);
\draw[edge] (0,6) -- (10,6);
\draw[edge] (0,8) -- (10,8);
\draw[edge] (0,0) -- (0,8);
\draw[edge] (2,0) -- (2,8);
\draw[edge] (4,0) -- (4,8);
\draw[edge] (6,4) -- (6,8);
\draw[edge] (8,4) -- (8,8);
\draw[edge] (10,6) -- (10,8);
\draw[red, line width=6pt] (3,0) -- (3,5) --(8,5) ;
\end{tikzpicture}
\captionsetup{labelformat=empty}	
\captionof{figure}{$(5,1,1,1)$}
\end{minipage}
\medskip
\medskip

\begin{minipage}{.18\textwidth}
\centering 
\begin{tikzpicture}[x=\unitsize,y=\unitsize,baseline=0]
\tikzset{vertex/.style={}}%
\tikzset{edge/.style={  thick}}%
\draw[edge] (0,0) -- (4,0);
\draw[edge] (0,2) -- (4,2);
\draw[edge] (0,4) -- (8,4);
\draw[edge] (0,6) -- (10,6);
\draw[edge] (0,8) -- (10,8);
\draw[edge] (0,0) -- (0,8);
\draw[edge] (2,0) -- (2,8);
\draw[edge] (4,0) -- (4,8);
\draw[edge] (6,4) -- (6,8);
\draw[edge] (8,4) -- (8,8);
\draw[edge] (10,6) -- (10,8);
\draw[red, line width=6pt] (3,0) -- (3,5) --(8,5) ;
\draw[red, line width=6pt] (1,0) -- (1,7) --(8,7) ;
\draw[fill=green] (9,7) circle [radius=0.5];
\end{tikzpicture}
\captionsetup{labelformat=empty}	
\captionof{figure}{$(0,0,0,0)$}	
\end{minipage}
\quad\quad
\begin{minipage}{.18\textwidth}
\centering 
\begin{tikzpicture}[x=\unitsize,y=\unitsize,baseline=0]
\tikzset{vertex/.style={}}%
\tikzset{edge/.style={  thick}}%
\draw[edge] (0,0) -- (4,0);
\draw[edge] (0,2) -- (4,2);
\draw[edge] (0,4) -- (8,4);
\draw[edge] (0,6) -- (10,6);
\draw[edge] (0,8) -- (10,8);
\draw[edge] (0,0) -- (0,8);
\draw[edge] (2,0) -- (2,8);
\draw[edge] (4,0) -- (4,8);
\draw[edge] (6,4) -- (6,8);
\draw[edge] (8,4) -- (8,8);
\draw[edge] (10,6) -- (10,8);
\draw[red, line width=6pt] (3,2) -- (3,5) --(6,5) ;
\draw[fill=green] (7,5) circle [radius=0.5];
\draw[fill=green] (3,1) circle [radius=0.5];
\end{tikzpicture}
\captionsetup{labelformat=empty}	
\captionof{figure}{$(5,1,1,1)$}
\end{minipage}
\quad\quad
\begin{minipage}{.18\textwidth}
\centering 
\begin{tikzpicture}[x=\unitsize,y=\unitsize,baseline=0]
\tikzset{vertex/.style={}}%
\tikzset{edge/.style={  thick}}%
\draw[edge] (0,0) -- (4,0);
\draw[edge] (0,2) -- (4,2);
\draw[edge] (0,4) -- (8,4);
\draw[edge] (0,6) -- (10,6);
\draw[edge] (0,8) -- (10,8);
\draw[edge] (0,0) -- (0,8);
\draw[edge] (2,0) -- (2,8);
\draw[edge] (4,0) -- (4,8);
\draw[edge] (6,4) -- (6,8);
\draw[edge] (8,4) -- (8,8);
\draw[edge] (10,6) -- (10,8);
\draw[red, line width=6pt] (3,2) -- (3,5) --(6,5) ;
\draw[red, line width=6pt] (1,0) -- (1,7) --(8,7) ;
\draw[fill=green] (9,7) circle [radius=0.5];
\draw[fill=green] (7,5) circle [radius=0.5];
\draw[fill=green] (3,1) circle [radius=0.5];
\end{tikzpicture}
\captionsetup{labelformat=empty}	
\captionof{figure}{$(0,0,0,0)$}
\end{minipage}
\quad\quad
\begin{minipage}{.18\textwidth}
\centering 
\begin{tikzpicture}[x=\unitsize,y=\unitsize,baseline=0]
\tikzset{vertex/.style={}}%
\tikzset{edge/.style={  thick}}%
\draw[edge] (0,0) -- (4,0);
\draw[edge] (0,2) -- (4,2);
\draw[edge] (0,4) -- (8,4);
\draw[edge] (0,6) -- (10,6);
\draw[edge] (0,8) -- (10,8);
\draw[edge] (0,0) -- (0,8);
\draw[edge] (2,0) -- (2,8);
\draw[edge] (4,0) -- (4,8);
\draw[edge] (6,4) -- (6,8);
\draw[edge] (8,4) -- (8,8);
\draw[edge] (10,6) -- (10,8);
\draw[red, line width=6pt] (3,2) -- (3,5) --(6,5) ;
\draw[red, line width=6pt] (1,2) -- (1,7) --(6,7) ;
\draw[fill=green] (9,7) circle [radius=0.5];
\draw[fill=green] (7,7) circle [radius=0.5];
\draw[fill=green] (7,5) circle [radius=0.5];
\draw[fill=green] (3,1) circle [radius=0.5];
\draw[fill=green] (1,1) circle [radius=0.5];
\end{tikzpicture}
\captionsetup{labelformat=empty}	
\captionof{figure}{$(0,0,0,0)$}
\end{minipage}

\end{center}

\medskip

The number of bullets determines the cohomological degree, after an initial shift by codimension, which is $c=7$ in this example. The first four patterns belong to $\mc{A}(\a;7)$, the fifth belongs to $\mc{A}(\a;8)$, the sixth belongs to $\mc{A}(\a;9)$, the seventh belongs to $\mc{A}(\a,10)$, and the eighth belongs to $\mc{A}(\a;12)$. By Theorem \ref{mainthm} we have the following in the Grothendieck group of $\D_X$-modules:	
$$
\big[\mathscr{H}^{7}_{Z_{\underline{a}}}(\mathcal{O}_{X})\big]=\big[\mathcal{L}(5,4,2,2)\big]+\big[\mathcal{L}(3,3,2,2)\big]+\big[\mathcal{L}(3,1,1,1)\big]+\big[\mathcal{L}(5,1,1,1)\big],
$$
$$
\big[\mathscr{H}^{8}_{Z_{\underline{a}}}(\mathcal{O}_{X})\big]=\big[\mathcal{L}(0)\big],\quad\big[\mathscr{H}^{9}_{Z_{\underline{a}}}(\mathcal{O}_{X})\big]=\big[\mathcal{L}(5,1,1,1)\big],\quad \big[\mathscr{H}^{10}_{Z_{\underline{a}}}(\mathcal{O}_{X})\big]=\big[\mathscr{H}^{12}_{Z_{\underline{a}}}(\mathcal{O}_{X})\big]=\big[\mathcal{L}(0)\big].
$$

The graded pieces of the weight filtration are given by the number of paths plus the number of bullets, after an initial shift of $c+d_X$, which is $27$ in this example. Thus, the weights of the simple modules above are (from left to right): $27$, $28$, $28$, $28$, $30$, $30$, $32$, $34$.\hfill\mbox{$\diamond$}
\end{example}

Every Grassmannian has a unique Schubert divisor $\Sv{\a}$, where $\a$ is obtained by removing the bottom right box from the $k\times (n-k)$ rectangle. The following example illustrates the general behavior for Schubert divisors. We see that there is only one way to remove Dyck paths of lengths $\geq 3$ such that the covering condition is satisfied, yielding only one nonzero local cohomology module.

\begin{example}\label{ex2}
Set $X=\operatorname{Gr}(4,9)$ and let $\a=(5,5,5,4)$. There are four elements of $Dyck^{\bullet}(\a)$ for which all paths have length at least three:\\

\begin{center}
\begin{minipage}{.18\textwidth}
\centering 
\begin{tikzpicture}[x=\unitsize,y=\unitsize,baseline=0]
\tikzset{vertex/.style={}}%
\tikzset{edge/.style={  thick}}%
\draw[edge] (-2,2) -- (6,2);
\draw[edge] (-2,4) -- (8,4);
\draw[edge] (-2,6) -- (8,6);
\draw[edge] (-2,8) -- (8,8);
\draw[edge] (-2,10) -- (8,10);
\draw[edge] (-2,2) -- (-2,10);
\draw[edge] (0,2) -- (0,10);
\draw[edge] (2,2) -- (2,10);
\draw[edge] (4,2) -- (4,10);
\draw[edge] (6,2) -- (6,10);
\draw[edge] (8,4) -- (8,10);
\end{tikzpicture}
\captionsetup{labelformat=empty}	
\captionof{figure}{$(5,5,5,4)$}
\end{minipage}
\quad\quad
\begin{minipage}{.18\textwidth}
\centering 
\begin{tikzpicture}[x=\unitsize,y=\unitsize,baseline=0]
\tikzset{vertex/.style={}}%
\tikzset{edge/.style={  thick}}%
\draw[edge] (-2,2) -- (6,2);
\draw[edge] (-2,4) -- (8,4);
\draw[edge] (-2,6) -- (8,6);
\draw[edge] (-2,8) -- (8,8);
\draw[edge] (-2,10) -- (8,10);
\draw[edge] (-2,2) -- (-2,10);
\draw[edge] (0,2) -- (0,10);
\draw[edge] (2,2) -- (2,10);
\draw[edge] (4,2) -- (4,10);
\draw[edge] (6,2) -- (6,10);
\draw[edge] (8,4) -- (8,10);
\draw[red, line width=6pt]  (5,2) -- (5,5) -- (8,5);
\end{tikzpicture}
\captionsetup{labelformat=empty}	
\captionof{figure}{$(5,5,3,3)$}
\end{minipage}
\quad\quad
\begin{minipage}{.18\textwidth}
\centering 
\begin{tikzpicture}[x=\unitsize,y=\unitsize,baseline=0]
\tikzset{vertex/.style={}}%
\tikzset{edge/.style={  thick}}%
\draw[edge] (-2,2) -- (6,2);
\draw[edge] (-2,4) -- (8,4);
\draw[edge] (-2,6) -- (8,6);
\draw[edge] (-2,8) -- (8,8);
\draw[edge] (-2,10) -- (8,10);
\draw[edge] (-2,2) -- (-2,10);
\draw[edge] (0,2) -- (0,10);
\draw[edge] (2,2) -- (2,10);
\draw[edge] (4,2) -- (4,10);
\draw[edge] (6,2) -- (6,10);
\draw[edge] (8,4) -- (8,10);
\draw[red, line width=6pt]  (5,2) -- (5,5) -- (8,5);
\draw[red, line width=6pt]  (3,2) -- (3,7) -- (8,7);
\end{tikzpicture}
\captionsetup{labelformat=empty}	
\captionof{figure}{$(5,2,2,2)$}
\end{minipage}
\quad\quad
\begin{minipage}{.18\textwidth}
\centering 
\begin{tikzpicture}[x=\unitsize,y=\unitsize,baseline=0]
\tikzset{vertex/.style={}}%
\tikzset{edge/.style={  thick}}%
\draw[edge] (-2,2) -- (6,2);
\draw[edge] (-2,4) -- (8,4);
\draw[edge] (-2,6) -- (8,6);
\draw[edge] (-2,8) -- (8,8);
\draw[edge] (-2,10) -- (8,10);
\draw[edge] (-2,2) -- (-2,10);
\draw[edge] (0,2) -- (0,10);
\draw[edge] (2,2) -- (2,10);
\draw[edge] (4,2) -- (4,10);
\draw[edge] (6,2) -- (6,10);
\draw[edge] (8,4) -- (8,10);
\draw[red, line width=6pt]  (5,2) -- (5,5) -- (8,5);
\draw[red, line width=6pt]  (3,2) -- (3,7) -- (8,7);
\draw[red, line width=6pt]  (1,2) -- (1,9) -- (8,9);
\end{tikzpicture}
\captionsetup{labelformat=empty}	
\captionof{figure}{$(1,1,1,1)$}
\end{minipage}

\end{center}

\medskip

There is one local cohomology module, given as follows in the Grothendieck group of $\D_X$-modules:
$$
\big[\mathscr{H}^{1}_{Z_{\a}}(\mathcal{O}_{X})\big]=\big[\mathcal{L}(5,5,5,4)\big]+\big[\mathcal{L}(5,5,3,3)\big]+\big[\mathcal{L}(5,2,2,2)\big]+\big[\mathcal{L}(1,1,1,1)\big].
$$		
The weight filtration is given by the number of paths in the pattern, after an initial shift by $c+d_X=21$. From left to right, the weights of the composition factors are: $21$, $22$, $23$, $24$.\hfill\mbox{$\diamond$}	
\end{example}

In the next example, we discuss the singular locus of $Z_{\a}$ in relation to local cohomology.

\begin{example}
Let $\a$ be a partition, and write $\a=(\tilde{a}_1^{d_1}, \tilde{a}_2^{d_2},\cdots, \tilde{a}_t^{d_t})$	with $\tilde{a}_1>\tilde{a}_2>\cdots>\tilde{a}_t$ and $d_i> 0$ for $1\leq i\leq t$. Pictorially, this expression for $\a$ decomposes the Young diagram for $\a$ into blocks of distinct widths. For instance $\a=(5^1,4^1,2^2)$ in Example \ref{ex1}, and $\a=(5^3,4^1)$ in Example \ref{ex2}. For $1\leq i\leq t-1$ we define the partitions
$$
\underline{a}^i:=(\tilde{a}_1^{d_1}, \cdots, \tilde{a}_{i-1}^{d_{i-1}},\tilde{a}_i^{d_i-1}, (\tilde{a}_{i+1}-1)^{d_{i+1}+1},\tilde{a}_{i+2}^{d_{i+2}},\cdots, \tilde{a}_t^{d_t}).
$$
In words, $\a^i$ is obtained from $\a$ by removing $\tilde{a}_i-\tilde{a}_{i+1}+1$ many boxes from the last row of the $\tilde{a}_i$ block, and removing one box from every row of the $\tilde{a}_{i+1}$ block. Together, the boxes being removed to make $\a^i$ form a path $P_i$ in the shape of a hook around the corner formed by the blocks in question. Lakshmibai--Weyman \cite[Theorem 5.3]{lak} give the following expression for the singular locus of $Z_{\a}$:
$$
\operatorname{Sing}(Z_{\a})=\bigcup_{i=1}^{t-1} Z_{\a^i}.
$$
We now discuss the appearances of $\mathcal{L}(\a^i)$ as composition factors of $\mathscr{H}^{\bullet}_{Z_{\a}}(\mc{O}_X)$. By Theorem \ref{mainthm}, the multiplicity of $\mathcal{L}(\a^i)$ in  $\mathscr{H}^{\bullet}_{Z_{\a}}(\mc{O}_X)$ is determined by the fillings of the path $P_i$ with augmented Dyck paths. More precisely, if we write $c=\codim(Z_{\a},X)$, then the multiplicity of $\mathcal{L}(\a^i)$ in $\mathscr{H}^{c+j}_{Z_{\a}}(\mc{O}_X)$ is one if $P_i$ can be filled with an augmented Dyck path with  $j$ bullets, and zero otherwise. A straightforward calculation gives the following generating function for the desired multiplicities:
\begin{equation}
\sum_{j\geq 0} \big[\mathscr{H}^{j}_{Z_{\a}}(\mc{O}_X):\mathcal{L}(\a^i)\big]\cdot q^j=q^c\cdot (q^{|\tilde{a}_i-\tilde{a}_{i+1}-d_{i+1}|}+q^{|\tilde{a}_i-\tilde{a}_{i+1}-d_{i+1}|+2}+\cdots+q^{\tilde{a}_i-\tilde{a}_{i+1}+d_{i+1}-2}),
\end{equation}
where $|\tilde{a}_i-\tilde{a}_{i+1}-d_{i+1}|$ denotes absolute value of the integer $\tilde{a}_i-\tilde{a}_{i+1}-d_{i+1}$. In particular, all cohomological degrees in which $\mathcal{L}(\a^i)$ appears have the same parity.

For instance, in Example \ref{ex1}, we have $\a=(5^1,4^1,2^2)$, $\a^1=(3^2,2^2)$, $\a^2=(5^1,1^3)$, and $c=7$. The multiplicity of $\mathcal{L}(\a^1)$ in local cohomology is described by the generating function $q^7(q^0)$ and the multiplicity of $\mathcal{L}(\a^2)$ in local cohomology is described by the generating function $q^7(q^0+q^2)$.\hfill\mbox{$\diamond$}
\end{example}

In the next example, we discuss the (rationally) smooth case.

\begin{example}
A Schubert variety $\Sv{\a}$ is smooth 	if and only if $\a$ is a rectangle \cite[Theorem 5.3]{lak}, in which case $Z_{\a}$ is isomorphic to a smaller Grassmannian. Observe that if $\a$ is a rectangle with $c=\operatorname{codim}(Z_{\a},X)$, then $\mc{A}(\a;c)=\{(\emptyset)\}$ and $\mc{A}(\a,c+j)=\emptyset$ for all $j\geq 1$. In particular, we recover the following well-known fact in the smooth case:
\begin{equation}\label{ratsmooth}
\mathscr{H}^c_{Z_{\a}}(\mc{O}_X)=\mc{L}(\a),\quad\tn{and}\quad \mathscr{H}^{c+j}_{Z_{\a}}(\mc{O}_X)=0\quad \tn{for $j\geq 1$}.
\end{equation}
The cohomological condition (\ref{ratsmooth}) is referred to as \textit{rational smoothness}. If $Z_{\a}$ is rationally smooth, then by Theorem \ref{mainthm} it follows that $\a$ does not have any inside corners, i.e. $\a$ is a rectangle. In particular, we recover the well-known fact that a Schubert variety in the Grassmannian is rationally smooth if and only if it is smooth (see \cite[Sections 4.3, 5.3]{diagrams} for a survey of rational smoothness properties of Schubert varieties in Hermitian symmetric spaces). Using notation from Section \ref{sec:over}, we have that $Z_w\subseteq G/P$ is rationally smooth if and only if $\mc{GC}_w^{\bullet}$ is a resolution of $\mc{L}(Z_w,G/P)$. \hfill\mbox{$\diamond$}
\end{example}

We prove Theorem \ref{mainthm} in Section \ref{sec:Grass} and we use it to calculate composition factors and weight filtration on local cohomology with support in generic determinantal varieties in Section \ref{sec:det}.

\section{The Hodge-theoretic Grothendieck-Cousin complex}\label{sec:2}

In this section we review background on $\D$-modules, mixed Hodge modules, and local cohomology (Section \ref{sec:D}), and we construct the Hodge-theoretic Grothendieck--Cousin complex (Section \ref{sec:const}). 

For use in our arguments below, our convention is that complex varieties are possibly reducible.

\subsection{$\D$-modules, mixed Hodge modules, and local cohomology}\label{sec:D} Let $X$ be a smooth complex variety of dimension $d_{X}$, with sheaf of algebraic differential operators $\D_{X}$. A $\D_X$-module is a sheaf of left modules over $\D_X$. We write $\tn{MHM}(X)$ for the category of algebraic mixed Hodge modules on $X$ (see \cite[Section 8.3.3]{htt} or \cite{saito89,saito90}), with bounded derived category $\tn{D}^b\tn{MHM}(X)$. These categories are also defined on singular varieties.

Given an irreducible closed subvariety $Z\subseteq X$, we write $\mc{L}(Z,X)$ for the intersection cohomology $\D_X$-module associated to the trivial local system on the regular locus $Z_{\tn{reg}}\subseteq Z$ \cite[Definition 3.4.1]{htt}, and we write $\IC^H_Z$ for the pure Hodge module associated to the trivial variation of Hodge structure on $Z_{\tn{reg}}$ \cite[Section 8.3.3(m13)]{htt}, which has weight $d_Z$. For instance, we have $\mc{L}(X,X)=\mc{O}_X$ and $IC^H_X=\mc{O}_X^H$. For $k\in \mathbb{Z}$ we write $IC^H_Z(k)$ for the $k$-th  Tate twist of $\IC^H_Z$ \cite[Section 8.3.3(m5)]{htt}, which is pure of weight $d_Z-2k$. The modules $\IC^H_Z(k)$ provide a complete list of polarizable pure Hodge modules that may overlie the $\D_X$-module $\mc{L}(Z,X)$ \cite[Section 8.3.3(m13)]{htt}.  

 Let $f:X\to Y$ be a morphism between complex varieties. We write the following for the corresponding direct and inverse image functors for mixed Hodge modules \cite[Section 4]{saito90}
$$
f_{\ast}:\tn{D}^b\tn{MHM}(X)\to \tn{D}^b\tn{MHM}(Y),\;\;\tn{and}\;\; f^{!}:\tn{D}^b\tn{MHM}(Y)\to \tn{D}^b\tn{MHM}(X).
$$
If $X$ and $Y$ are smooth, then these functors lift the $\D$-module functors $\int_f$ and $f^{\dagger}$ from \cite[Chapter 1.5]{htt}, and if $f$ is a closed immersion of codimension $c$, then $f^{!}\mc{O}_Y^H=\mc{O}_X^H(-c)[-c]$.

Let $X$ be smooth and let $i:Z\hookrightarrow X$ be a locally closed subvariety of $X$. Given $\mc{M}\in \tn{D}^b\tn{MHM}(X)$, we define the local cohomology modules of $\mc{M}$ with support in $Z$ via
$$
\lc{Z}^q(\mc{M}):=\mc{H}^q(i_{\ast}i^{!}\mc{M}),\quad q\in \mathbb{Z}.
$$
In particular, for $\mc{M}=\mc{O}^H_X$ this endows each local cohomology module $\lc{Z}^q(\mc{O}_X)$ with a mixed Hodge module structure, which we denote by $\lc{Z}^q(\mc{O}^H_X)$. For more information, see \cite[Section B]{MP4}.

Now assume that $Z\subseteq X$ is a closed subvariety of $X$, with complement $U=X\setminus Z$, and open immersion $j:U\to X$.
Given $\mc{M}\in \tn{D}^b\tn{MHM}(X)$, there is an exact triangle \cite[(4.4.1)]{saito90}
\begin{equation}\label{triangle}
i_{\ast}i^{!}(\mc{M})\lra \mc{M}\lra j_{\ast}j^{!}(\mc{M})\overset{+1}\lra.
\end{equation}
In particular, if $\mc{M}$ is a mixed Hodge module, we obtain an exact sequence 
$$
0\lra \lc{Z}^0(\mc{M})\lra \mc{M}\lra \mc{H}^0(j_{\ast}(\mc{M}|_U))\lra \lc{Z}^1(\mc{M})\lra 0,
$$
and isomorphisms
$$
\lc{Z}^q(\mc{M})\cong \mc{H}^{q-1}(j_{\ast}(\mc{M}|_U)),\quad \tn{for $q\geq 2$.}
$$
We write $F_{\bullet}$ and $W_{\bullet}$ for the Hodge and weight filtrations of $\lc{Z}^q(\mc{M})$.

\subsection{Construction of the Grothendieck--Cousin complex}\label{sec:const}

\begin{lemma}\label{locallyClosedSequence}
Let $\mathcal{M}$ be a mixed Hodge module on a smooth complex variety $X$, and let $Z'\subsetneq Z$ be closed subvarieties of $X$. We have the following exact sequence of mixed Hodge modules:
$$
0 \to \mathscr{H}^0_{Z'}(\mc{M})\to \mathscr{H}^0_Z(\mc{M}) \to \mathscr{H}^0_{Z\setminus Z'}(\mc{M})\to \mathscr{H}^{1}_{Z'}(\mc{M})\to \mathscr{H}^{1}_Z(\mc{M}) \to \mathscr{H}^{1}_{Z\setminus Z'}(\mc{M})\to \cdots 		
$$
\end{lemma}

\begin{proof}
We set $U=X\setminus Z'$ with open immersion $j$, and we write $i':Z'\to X$, $i:Z\to X$ for the natural inclusions. By (\ref{triangle}) applied to $i'=i$ and $i_{\ast}i^!\mc{M}$ we obtain an exact triangle
$$
i'_{\ast}i'^!i_{\ast}i^!\mc{M}\lra i_{\ast}i^!\mc{M}\lra j_{\ast}j^!i_{\ast}i^!\mc{M}\overset{+1}\lra.
$$
Let $f$ be the inclusion of $Z'$ into $Z$. Since $i'=i\circ f$, we have  $i'_{\ast}i'^!i_{\ast}i^!= i'_{\ast}f^!i^!i_{\ast}i^!$, which by adjunction of $i^!$ and $i_{\ast}$ is equal to  $i_{\ast}'i'^!$. Let $\tilde{j}$ be the inclusion of $Z\setminus Z'$ into $Z$, and let $\tilde{i}$ be the inclusion of $Z\setminus Z'$ into $U$. By base change \cite[(4.4.3)]{saito90} we have that $j_{\ast}j^!i_{\ast}i^!=k_{\ast}k^!$. Therefore, the triangle above becomes
\begin{equation}\label{trianglewanted}
i'_{\ast}i'^!\mc{M}\lra i_{\ast}i^!\mc{M}\lra k_{\ast}k^!\mc{M}\overset{+1}\lra.	
\end{equation}
The desired exact sequence is obtained as the long exact sequence of cohomology of (\ref{trianglewanted}). 
\end{proof}

\begin{prop}\label{GCcomplex}
Let $X$ be a smooth complex variety and let 
$$
X=Z_0\supsetneq Z_1 \supsetneq \cdots \supsetneq Z_{\ell} \supsetneq Z_{\ell+1}=\emptyset
$$	
be a decreasing sequence of closed subsets. If $\mathcal{M}$ is a mixed Hodge module on $X$ satisfying $\mathscr{H}^{j-1}_{Z_j}(\mc{M})=\mathscr{H}^{j+1}_{Z_j}(\mc{M})=0$ for all $0\leq j\leq \ell$, then there exists a complex
$$
\mathcal{GC}^{\bullet}_{\{Z\}}(\mathcal{M}):\quad 0  \to \mathscr{H}^0_{Z_0\setminus Z_1}(\mathcal{M})\to \mathscr{H}^1_{Z_1\setminus Z_2}(\mathcal{M}) \to \cdots \to \mathscr{H}^{\ell}_{Z_{\ell}\setminus Z_{\ell+1}}(\mathcal{M})\to 0,
$$
which is a right resolution of $\mathcal{M}$ in the category of mixed Hodge modules. This complex is known as the Grothendieck--Cousin of $\mathcal{M}$ complex associated to the filtration $\{Z\}$.
\end{prop}

\begin{proof}
By Lemma \ref{locallyClosedSequence} and the hypothesis on $\mathcal{M}$, we obtain exact sequences
$$
0 \to \mathscr{H}^j_{Z_j}(\mathcal{M}) \overset{\alpha^j}\lra \mathscr{H}^j_{Z_j\setminus Z_{j+1}}(\mathcal{M})\overset{\beta^j}\lra \mathscr{H}^{j+1}_{Z_{j+1}}(\mathcal{M})\to  0.
$$

For $0\leq j\leq \ell$ we set $\mathcal{GC}^{j}_{\{Z\}}(\mathcal{M})=\mathscr{H}^j_{Z_j\setminus Z_{j+1}}(\mathcal{M})$, and we define $d^j:\mathcal{GC}^{j}_{\{Z\}}(\mathcal{M})\to \mathcal{GC}^{j+1}_{\{Z\}}(\mathcal{M})$ to be the composition of maps
$$
d^j=\alpha^{j+1}\circ \beta^j.
$$
By construction, the image of $d^j$ is  $\mathscr{H}^{j+1}_{Z_{j+1}}(\mathcal{M})$, which is the kernel of $d^{j+1}$. Thus, $\mathcal{GC}^{\bullet}_{\{Z\}}(\mathcal{M})$ is exact in cohomological degrees $j\geq 1$. Since the kernel of $\beta^0$ is $\mathscr{H}^0_{X}(\mathcal{M})=\mathcal{M}$, we conclude that $\mathcal{GC}^{\bullet}_{\{Z\}}(\mathcal{M})$ is a resolution of $\mathcal{M}$.
\end{proof}

The Grothendieck--Cousin complex was introduced and studied in greater generality in \cite{kempf}. In particular, Kempf considers more general input sheaves. We will be interested in the following case.
 
 \begin{corollary}\label{corGCcomplex}
Let $X$ be an irreducible smooth complex variety of dimension $d$, and let 
$$
X=Z_0\supsetneq Z_1 \supsetneq \cdots \supsetneq Z_{d} \supsetneq Z_{d+1}=\emptyset
$$	
be a decreasing sequence of equidimensional closed subsets with $\operatorname{codim}(Z_j,X)=j$. If $Z_j\setminus Z_{j+1}$ is smooth and the inclusion $Z_j\setminus Z_{j+1}\subseteq X$ is affine for all $0\leq j \leq d$, then $\mathcal{GC}^{\bullet}_{\{Z\}}:=\mathcal{GC}^{\bullet}_{\{Z\}}(\mc{O}_X^H)$ is a right resolution of $\mc{O}_X^H$ in the category of mixed Hodge modules.
 \end{corollary}
 
 \begin{proof}
 By Proposition \ref{GCcomplex}, it suffices to show that $\mathscr{H}^{j-1}_{Z_j}(\mc{O}_X^H)=\mathscr{H}^{j+1}_{Z_j}(\mc{O}_X^H)=0$ for all $j$. Since $Z_j$ is equidimensional with $\operatorname{codim}(Z_j,X)=j$ we have that $\mathscr{H}^{j-1}_{Z_j}(\mc{O}_X^H)=0$. 
 
 Let $k$ be the inclusion of $Z_j\setminus Z_{j+1}$ into $X$. Since $Z_j\setminus Z_{j+1}$ is smooth, we have $k^{!}\mc{O}_X^H=\mc{O}_{Z_j\setminus Z_{j+1}}^H(-j)[-j]$, and since $k$ is affine, we have that $k_{\ast}k^{!}\mc{O}_X^H$ only has cohomology in degree $j$, which is $\mathscr{H}^j_{Z_j\setminus Z_{j+1}}(\mc{O}_X^H)$. By Lemma \ref{locallyClosedSequence} we obtain that $\mathscr{H}^i_{Z_j}(\mc{O}_X^H)\cong \mathscr{H}^i_{Z_{j+1}}(\mc{O}_X^H)$ for all $i>j+1$. Since $\mathscr{H}^i_X(\mc{O}_X^H)=0$ for all $i>0$, we conclude that, for all $0\leq j\leq d$ we have $\mathscr{H}^i_{Z_j}(\mc{O}_X^H)=0$ for $i>j$. In particular, $\mathscr{H}^{j+1}_{Z_j}(\mc{O}_X^H)=0$.
 \end{proof}

\section{The Grothendieck--Cousin complex of a Schubert variety}\label{sec:3}

\subsection{Lie algebras and Weyl groups} We let $\g$ be a complex simple Lie algebra of rank $n$. We fix a Cartan subalgebra $\mf{h}\subset \g$, and we write $\Phi\subset \mf{h}^{\ast}$ for the root system of $\g$ with respect to $\mf{h}$. We have the root space decomposition of $\g$:
$$
\g=\mf{h}\oplus \bigoplus_{\alpha \in \Phi} \g_{\alpha}.
$$
We denote by $\Delta=\{\alpha_1,\cdots,\alpha_n\}\subset \Phi$ a choice of simple roots, and we write $\Phi^{+}\subset \Phi$ for the set of positive roots with respect to $\Delta$. The standard Borel subalgebra of $\g$ associated to this data is:
$$
\mf{b}=\mf{h}\oplus \bigoplus_{\alpha \in \Phi^+} \g_{\alpha}.
$$

A subset $I\subset \Delta$ determines a root system $\Phi_I$ via
$$
\Phi_I:=\Phi\cap \sum_{\alpha\in I}\mathbb{Z}\alpha,
$$
and thus determines the following three subalgebras of $\g$:
\begin{equation}
\mf{l}_I=\mf{h}\oplus \bigoplus_{\alpha \in \Phi_I} \g_{\alpha},\quad\quad \mf{u}_I=	\bigoplus_{\alpha \in \Phi^+\setminus \Phi_I}\g_{\alpha},\quad\quad \mf{p}_I=\mf{l}_I\oplus \mf{u}_I.
\end{equation}
We say $\mf{p}_I$ is a parabolic subalgebra of $\g$, with Levi factor $\mf{l}_I$ and nilradical $\mf{u}_I$. When the set $I$ is understood, we will sometimes omit the subscript $I$ and simply write $\mathfrak{p}=\mf{l}\oplus \mf{u}$.

Let $W$ denote the Weyl group of $\g$, generated by the simple reflections $s_{\alpha}$, where $\alpha\in \Delta$. Given $w\in W$ we write $\ell(w)$ for the length of $w$, which is the length of the shortest word for $w$ in the simple reflections. The group $W$ has a unique longest element, which we will denote by $w_{\circ}$, with length $\ell(w_{\circ})=|\Phi^+|$. Given a parabolic subalgebra $\mf{p}_I=\mf{l}\oplus \mf{u}$, we write $W_I=\langle s_{\alpha}\mid \alpha \in I\rangle$ for the Weyl group of the Levi factor $\mf{l}$, with longest element $w_I$. In each left coset $wW_I\in W/W_I$ there exists a unique element of minimal length. We consider the set of minimal length coset representatives:
\begin{equation}
W^I=\{ w\in W\mid \ell(ww)=\ell(w)+\ell(w')\quad \tn{for all $w'\in W_I$}\}.
\end{equation}
Each $x\in W$ may be expressed uniquely as $x=ww'$ where $w\in W^I$ and $w'\in W_I$. We write $w_{\circ}(I)$ for the longest element in $W^I$, which satisfies $w_{\circ}(I)w_I=w_{\circ}$ and $\ell(w_{\circ}(I))=|\Phi^+\setminus \Phi_I|$.

\subsection{Generalized flag varieties and Schubert varieties} Let $G$ be the simply-connected complex simple linear algebraic group with Lie algebra  $\g$, and with Borel subgroup $B$ corresponding to $\mf{b}\subset \g$. For $I\subset \Phi$ we write $P=P_I$ for the parabolic subgroup of $G$ corresponding to $\mf{p}=\mf{p}_I$. We refer to $G/B$ as the complete flag variety of $G$, and we call $G/P$ the generalized flag variety associated to $P$. The dimension of $G/P$ is $\dim(G/P)=\ell(w_{\circ}(I))$.

Let $T$ be the maximal torus of $G$ associated to the fixed Cartan subalgebra $\mf{h}$ of $\g$. With this notation, the Weyl group of $G$ is $W=N_G(T)/T$. The group $G$ has the Bruhat decomposition
$$
G=\bigcup_{w\in W} BwB,
$$
where the $w$ in $BwB$ is any representative in $G$ of the class of $w$. Furthermore, $G/P$ has Bruhat decomposition
$$
G/P=\bigcup_{w\in W^I}BwP/P.
$$
The sets $BwP/P$ are the $B$-orbits on $G/P$, known as the Schubert cells. For $w\in W^I$ we set
\begin{equation}
\Sc{w}=BwP/P\subseteq G/P.
\end{equation}
The Zariski closure $\Sv{w}$ of $\Sc{w}$ is the Schubert variety associated to $w\in W^I$ in $G/P$. It has dimension $\ell(w)$ and codimension $c(w):=\operatorname{codim}(\Sv{w},X)$. We say $v\leq w$ if $O_v\subseteq Z_w$.

\subsection{Structure of the Grothendieck--Cousin complex}
We let $X=G/P_I$ and $d=\dim X$. For $0\leq j\leq d$ we consider the closed sets
$$
Z_j=\bigcup_{\substack{v \in W^I\\ c(v)=j}} \Sv{v}.
$$
The sets $Z_j$ fit into a descending chain
$$
\GG=Z_0\supset Z_1 \supset \cdots \supset Z_d \supset Z_{d+1}=\emptyset,
$$
where the inclusion $Z_j\setminus Z_{j+1}\subseteq X$ is affine. Each $Z_j\setminus Z_{j+1}$ is the disjoint union of closed smooth subvarieties $O_v$ over $v\in W^I$ with $c(v)=j$, so $Z_j\setminus Z_{j+1}$ is smooth and $Z_j$ satisfies $\operatorname{codim}(Z_j,X)=j$.

By Corollary \ref{corGCcomplex}, the mixed Hodge module $\mc{O}^H_X$ admits a right resolution 
$$
\mathcal{GC}^{\bullet}_{\{Z\}}:\quad 0  \lra \mathscr{H}^0_{Z_0\setminus Z_1}(\mc{O}_X^H)\lra  \mathscr{H}^1_{Z_1\setminus Z_2}(\mc{O}_X^H) \lra  \cdots \lra  \mathscr{H}^d_{Z_d\setminus Z_{d+1}}(\mc{O}_X^H)\lra  0,
$$
where we view $\mathscr{H}^j_{Z_j\setminus Z_{j+1}}(\mc{O}_X^H)$ as being in cohomological degree $j$. For $w\in W^I$ we define
\begin{equation}
\mathcal{GC}^{\bullet}_{w}=\mathscr{H}^0_{\Sv{w}}\big(\mathcal{GC}^{\bullet}_{\{Z\}}\big).
\end{equation}
The main result of this subsection is the following.
\begin{theorem}\label{GCaComplex}
The following is true about $\mathcal{GC}^{\bullet}_{w}$.
\begin{enumerate}
\item For all $0\leq j\leq d$ we have
$$
\mc{H}^j\big(\mathcal{GC}^{\bullet}_{w}\big)=\mathscr{H}^j_{\Sv{w}}(\mc{O}_X^H).
$$
\item $\mathcal{GC}^j_{w}=0$ for $j<c(w)$, and for $j\geq c(w)$ we have an isomorphism
$$
\mathcal{GC}^j_{w}=\bigoplus_{\substack{v \leq w \\ c(v)=j}} \mathscr{H}^j_{\Sc{v}}(\mc{O}_X^H).
$$
\item For $u<v \leq w$ with $j=c(v)=c(u)-1$, the differential in $\mathcal{GC}^{\bullet}_{w}$ induces a nonzero morphism:
$$
\mathscr{H}^j_{\Sc{v}}(\mc{O}_X^H)\lra \mathscr{H}^{j+1}_{\Sc{u}}(\mc{O}_X^H),
$$
with $\mathcal{L}(Z_u,X)$ in its image.
\end{enumerate}
	
\end{theorem}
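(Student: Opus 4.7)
The plan is to prove the three parts in the order (2) $\to$ (1) $\to$ (3), where (2) supplies the structural decomposition needed for (1), and (3) is the main obstacle and requires a separate local argument.

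For (2), I would first observe that $Z_j \setminus Z_{j+1} = \bigsqcup_{v \in W^I,\, c(v) = j} \Sc{v}$ is a disjoint union of affine Schubert cells, each embedded via an affine locally closed smooth immersion, so standard $\D$-module theory gives $\mc{GC}^j_{\{Z\}} = \bigoplus_{c(v)=j} \mathscr{H}^j_{\Sc{v}}(\mc{O}_X^H)$. Applying $\mathscr{H}^0_{\Sv{w}}$ to each summand, I split by whether $v \leq w$. If $v \leq w$, the summand is supported on $\Sv{v} \subseteq \Sv{w}$, so $\mathscr{H}^0_{\Sv{w}}$ acts as the identity. If $v \not\leq w$, cell-disjointness of $\Sv{w} = \bigsqcup_{v' \leq w} \Sc{v'}$ forces $\Sc{v} \cap \Sv{w} = \emptyset$; hence the locally closed immersion $\Sc{v} \hookrightarrow X$ factors through $U_w := X \setminus \Sv{w}$, so $\mathscr{H}^j_{\Sc{v}}(\mc{O}_X^H)$ lies in the essential image of $j_*$, and the standard identity $i^! \circ j_* = 0$ for the complementary immersions $i: \Sv{w} \hookrightarrow X$ and $j: U_w \hookrightarrow X$ yields $\mathscr{H}^k_{\Sv{w}}(\mathscr{H}^j_{\Sc{v}}(\mc{O}_X^H)) = 0$ for all $k \geq 0$. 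This gives the decomposition; vanishing for $j < c(w)$ is immediate, since $v \leq w$ forces $\ell(v) \leq \ell(w)$ and hence $c(v) \geq c(w)$. For (1), I would then apply $\R\Gamma_{\Sv{w}}$ to the quasi-isomorphism $\mc{O}_X^H \simeq \mc{GC}^{\bullet}_{\{Z\}}$ from Proposition \ref{GCcomplex}: the case analysis above makes every term $\Gamma_{\Sv{w}}$-acyclic (the $v \leq w$ summands because they are already supported on $\Sv{w}$, the $v \not\leq w$ summands because they are killed by all $\mathscr{H}^k_{\Sv{w}}$), so $\R\Gamma_{\Sv{w}}(\mc{GC}^{\bullet}_{\{Z\}}) = \Gamma_{\Sv{w}}(\mc{GC}^{\bullet}_{\{Z\}}) = \mc{GC}^{\bullet}_w$, and taking cohomology computes $\mathscr{H}^j_{\Sv{w}}(\mc{O}_X^H)$.

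Part (3) is the main obstacle. Write $d^j = \alpha^{j+1} \circ \beta^j$ as in Proposition \ref{GCcomplex}; by (2) the $\Sc{v}$-to-$\Sc{u}$ component of $d^j$ is the composition of the connecting map $\beta^j$ with projection onto the $\mathscr{H}^{j+1}_{\Sc{u}}$-summand of $\mc{GC}^{j+1}_{\{Z\}}$. The plan is to reduce to a local computation near the generic point of $\Sc{u}$ by choosing an open $V \subseteq X$ such that $\Sc{v}$ is the only codimension-$j$ Schubert cell meeting $V$, $\Sc{u}$ is the only codimension-$(j+1)$ cell of $\Sv{v}$ meeting $V$, and $Z_{j+2} \cap V = \emptyset$; such a $V$ can be assembled by excising appropriate unions of boundary Schubert varieties and exploiting pairwise disjointness of distinct cells. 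On $V$ the relevant component of $\beta^j$ reduces to the boundary map in the two-term long exact sequence of Lemma \ref{locallyClosedSequence}(2) for the inclusion $(\Sv{u} \cap V) \subseteq (\Sv{v} \cap V)$, and at a smooth point of $\Sv{v}$ lying on $\Sc{u}$ this is the classical residue map of local cohomology along a smooth divisor, which is nonzero. The delicate points are arranging $V$ so that removing other codimension-$(j+1)$ cells does not also remove $\Sc{u}$, and handling the possibility that $\Sv{v}$ is singular along all of $\Sc{u}$. In the Grassmannian setting of Section \ref{sec:Grass} the latter is unproblematic because Schubert varieties are smooth along their codimension-one boundary strata; in general one can alternatively globalize via Beilinson--Bernstein and invoke the nonvanishing of BGG-type differentials between adjacent dual parabolic Verma modules, as originally established by Kempf \cite{kempf}.
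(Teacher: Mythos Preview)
Your approach to (1) and (2) is essentially the paper's: it uses the same splitting via Lemma~\ref{Lem:codimSplit} and the same case analysis on $v\leq w$ versus $v\not\leq w$, phrasing the acyclicity through a degenerate Grothendieck spectral sequence rather than your $i^{!}j_{*}=0$, but these are equivalent.

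For (3) the paper also localizes, and its execution is a bit sharper than your sketch in two respects. First, the open set it uses is $U=X\setminus\bigcup_{x\not\geq u}\Sv{x}$; in $U$ one has $\overline{O_v}\cap U=O_v\cup O_u$ with $O_u$ closed, and the relevant component of the differential becomes the connecting map of Lemma~\ref{locallyClosedSequence} for this pair, which is surjective once $\mathscr{H}^{j+1}_{\overline{O}_v}(\mc{O}_U^H)=0$. Your stated requirement that $\Sc{v}$ be the \emph{only} codimension-$j$ cell meeting $V$ is generally impossible to arrange without excising $O_u$ as well (other $v'$ covering $u$ have $O_u$ in their closure), and is in any case unnecessary since only the $\Sc{v}$ summand is in play. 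Second, the smoothness of $\Sv{v}$ along $O_u$ that you flag as delicate is handled in the paper by invoking normality of Schubert varieties (Ramanan--Ramanathan), which forces the singular locus of $\Sv{v}$ to have dimension $\leq \ell(v)-2<\ell(u)$; this holds for arbitrary $G/P$, not only for Grassmannians, so the BGG/Kempf workaround is not needed.
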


We note that part (3) is not formal: its proof uses the fact that Schubert varieties are normal. In order to prove Theorem \ref{GCaComplex}, we first need the following lemma.

\begin{lemma}\label{Lem:codimSplit}
For $0\leq j\leq d$ we have
$$
\mathscr{H}^j_{Z_j\setminus Z_{j+1}}(\mc{O}_X^H)=\bigoplus_{\substack{v \in W^I\\ c(v)=j}} \mathscr{H}^j_{\Sc{v}}(\mc{O}_X^H).
$$	
\end{lemma}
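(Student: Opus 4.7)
\smallskip

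\noindent\textbf{Proof plan for Lemma \ref{Lem:codimSplit}.}

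The plan is to show that $Y_j := Z_j \setminus Z_{j+1}$ decomposes as a disjoint union of Schubert cells, with each cell open-and-closed in $Y_j$, and then invoke the corresponding direct sum decomposition of local cohomology. First I would observe that the Bruhat decomposition refines the filtration $\{Z_j\}$ set-theoretically: any point of $X$ lies in exactly one cell $O_u$ ($u \in W^I$), and $O_u \subseteq \Sv{v}$ if and only if $u \leq v$ in Bruhat order, which forces $c(u) \geq c(v)$. Hence a cell $O_u$ meets $Z_j$ iff $c(u) \geq j$, and meets $Y_j$ iff $c(u) = j$. This gives the set-theoretic identity
$$
Y_j \;=\; \bigsqcup_{\substack{v\in W^I\\ c(v)=j}} O_v.
$$

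Next I would verify that each $O_v$ with $c(v) = j$ is simultaneously open and closed in $Y_j$. It is closed because $\ol{O_v} \cap Y_j = O_v$: indeed $\ol{O_v} \setminus O_v$ is a union of Schubert varieties $\Sv{u}$ with $c(u) \geq j+1$, hence contained in $Z_{j+1}$. Openness then follows because the decomposition above is finite and the complement $Y_j \setminus O_v = \bigsqcup_{v'\neq v} O_{v'}$ is a finite union of closed subsets of $Y_j$.

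To finish, I would factor the locally closed inclusion $i: Y_j \hookrightarrow X$ as the open immersion $j_U : U \hookrightarrow X$ (with $U = X \setminus Z_{j+1}$) followed by the closed immersion $i': Y_j \hookrightarrow U$, and similarly factor $i_v: O_v \hookrightarrow X$ through the closed immersion $\iota_v: O_v \hookrightarrow U$. Because $Y_j$ decomposes as the disjoint union of closed subvarieties $O_v$ of $U$, the category $\mhm(Y_j)$ splits as the product of the categories $\mhm(O_v)$, so $i'_{\ast}(i')^{!} = \bigoplus_v (\iota_v)_{\ast}(\iota_v)^{!}$ as functors on $\tn{D}^b\mhm(U)$. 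Applying $(j_U)_{\ast}$ and then $(j_U)^{!} = (j_U)^{\ast}$ to $\mc{O}_X^H$, and passing to cohomology sheaves in degree $j$, yields the claimed decomposition.

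The argument is essentially formal once the set-theoretic picture is in place; the only point that requires care is confirming that each $O_v$ is genuinely \emph{closed} in $Y_j$ (and not merely locally closed), which is what allows the direct sum splitting of $i'_{\ast}(i')^{!}$ in the mixed Hodge module category. This relies on the Bruhat-order description of Schubert cell closures, so no deeper input is needed beyond what is already set up in Section \ref{sec:3}.
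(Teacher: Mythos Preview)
Your proposal is correct and follows essentially the same approach as the paper: both arguments reduce to the observation that $Y_j = Z_j \setminus Z_{j+1}$ is the disjoint union of the Schubert cells $O_v$ with $c(v)=j$, each open-and-closed in $Y_j$, and then invoke the resulting splitting of local cohomology. The paper phrases the final step slightly differently---it uses smoothness to identify $(k^j)^{!}\mc{O}_X^H$ with $\mc{O}^H_{Y_j}[j](-j)$ and then decomposes $\mc{O}^H_{Y_j}$ over the connected components, whereas you decompose the functor $i'_{\ast}(i')^{!}$ directly---but this is a cosmetic difference, not a substantive one.
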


\begin{proof}
Let $k^j:Z_j\setminus Z_{j+1}\hookrightarrow X$, $i_v:O_v\hookrightarrow X$, and $i^j_v: O_v\hookrightarrow Z_j\setminus Z_{j+1}$. The variety $Z_j\setminus Z_{j+1}$ is the disjoint union of closed smooth subvarieties $O_v$ over $v\in W^I$ with $c(v)=j$. Thus,
$$
\mc{O}^H_{Z_j\setminus Z_{j+1}}=\bigoplus_{\substack{v \in W^I\\ c(v)=j}} i^j_{v\ast}\mc{O}^H_{O_v}.
$$
Since $\mathscr{H}^j_{Z_j\setminus Z_{j+1}}(\mc{O}_X^H)=k^j_{\ast} \mc{O}^H_{Z_j\setminus Z_{j+1}}(-j)[-j]$ and $\mathscr{H}^j_{\Sc{v}}(\mc{O}_X^H)=i_{v\ast}\mc{O}^H_{O_v}(-j)[-j]$, we obtain the desired result using $k^j_{\ast}\circ i^j_{v\ast}=i_{v\ast}$.
\end{proof}

\begin{proof}[Proof of Theorem \ref{GCaComplex}]
Since $\mathcal{GC}^{\bullet}_{\{Z\}}$ is a resolution of $\mc{O}_X^H$, to prove (1) and (2) it suffices to show that
$$
\mathscr{H}^0_{\Sv{w}}( \mathscr{H}^j_{Z_j\setminus Z_{j+1}}(\mc{O}_X^H)\big)=\bigoplus_{\substack{v \leq w \\ c(v)=j}}\mathscr{H}^j_{\Sc{v}}(\mc{O}_X^H),
$$	
and $\mathscr{H}^i_{\Sv{w}}(\mathscr{H}^{j}_{Z_j\setminus Z_{j+1}}(\mc{O}_X^H))=0$ for $i>0$. By Lemma \ref{Lem:codimSplit} we have that
$$
\mathscr{H}^j_{Z_j\setminus Z_{j+1}}(\mc{O}_X^H)=\bigoplus_{\substack{v \in W^I\\ c(v)=j}} \mathscr{H}^j_{\Sc{v}}(\mc{O}_X^H).
$$
Let $v\in W^I$ with $c(v)=j$. If $v\leq w$, then $\Sc{v}\cap \Sv{w}=\Sc{v}$. Let $\beta:\Sc{v}\hookrightarrow \Sv{w}$, $\alpha:\Sc{v}\hookrightarrow X$, $\gamma:\Sv{w}\hookrightarrow X$, so that $\alpha=\gamma\circ \beta$. By base change \cite[(4.4.3)]{saito90} we have $\gamma_{\ast}\gamma^!\alpha_{\ast}\alpha^!=\gamma_{\ast}\beta_{\ast}\alpha^!=\alpha_{\ast}\alpha^!$. Thus, there is a Grothendieck spectral sequence
$$
E^{p,q}_2=\mathscr{H}^p_{\Sv{w}}\big(\mathscr{H}^q_{\Sc{v}}(\mc{O}_X^H)\big) \implies \mathscr{H}^{p+q}_{\Sc{v}\cap \Sv{w}}(\mc{O}_X^H)=\mathscr{H}^{p+q}_{\Sc{v}}(\mc{O}_X^H).
$$
Since $\mathscr{H}^q_{\Sc{v}}(\mc{O}_X^H)\neq 0$ only if $q=j$, this spectral sequence has only one nonzero column on the $E_2$-page, and has only one nonzero row on the $E_{\infty}$-page. Thus, the spectral sequence degenerates at $E_2$, so we have $\mathscr{H}^0_{\Sv{w}}(\mathscr{H}^j_{\Sc{v}}(\mc{O}_X^H))=\mathscr{H}^{j}_{\Sc{v}}(\mc{O}_X^H)$, and $\mathscr{H}^i_{\Sv{w}}(\mathscr{H}^{j}_{\Sc{v}}(\mc{O}_X^H))=0$ for $i>0$. On the other hand, if $v\nleq w$, then $O_v\cap Z_w=\emptyset$, so by base change we obtain a spectral sequence 
$$
E^{p,q}_2=\mathscr{H}^p_{\Sv{w}}\big(\mathscr{H}^q_{\Sc{v}}(\mc{O}_X^H)\big) \implies 0.
$$
Since $\mathscr{H}^q_{\Sc{v}}(\mc{O}_X^H)\neq 0$ only if $q=j$, this spectral sequence is degenerate, yielding $\mathscr{H}^i_{\Sv{w}}(\mathscr{H}^{j}_{\Sc{v}}(\mc{O}_X^H))=0$ for all $i,j$, which completes the proof of (1) and (2).

It suffices to prove (3) in the case $w=w_{\circ}(I)$ is the longest element of $W^I$ so that $\mathcal{GC}^{\bullet}_{w}=\mathcal{GC}^{\bullet}_{\{Z\}}$, and after restricting to the following open set $$
U=\GG \setminus  \bigcup_{x \not\geq u} \Sv{x}.
$$
We note that $O_u$ is closed in $U$. By construction of $\mathcal{GC}^{\bullet}_{\{Z\}}$, the map from $\mathscr{H}^j_{\Sc{v}}(\mc{O}_U^H)$ to $\mathscr{H}^{j+1}_{\Sc{u}}(\mc{O}_U^H)$ in $\mathcal{GC}^{\bullet}_{\{Z\}}|_U$ fits into the following exact sequence (see Lemma \ref{locallyClosedSequence}):
 $$
 0\lra \mathscr{H}^j_{\overline{O}_v}(\mc{O}_U^H)\lra  \mathscr{H}^j_{\Sc{v}}(\mc{O}_U^H)\lra \mathscr{H}^{j+1}_{\Sc{u}}(\mc{O}_U^H)\lra \mathscr{H}^{j+1}_{\overline{O}_v}(\mc{O}_U^H)\lra 0.
 $$
Since $\Sv{v}$ is normal \cite[Theorem 3]{normal}, its singular locus must have dimension $\leq \ell(v)-2$, and by equivariance it must be a union of Schubert varieties. Since $\ell(v)=\ell(u)+1$, it follows that  $\overline{O}_v$ is smooth in $U$, so we have $\mathscr{H}^{j+1}_{\overline{O}_v}(\mc{O}_U^H)=0$. As $\mathscr{H}^{j+1}_{\Sc{u}}(\mc{O}_U^H)\cong \mathcal{L}(Z_u,X)|_U\neq 0$, we conclude that the map $\mathscr{H}^j_{\Sc{v}}(\mc{O}_U^H)\to \mathscr{H}^{j+1}_{\Sc{u}}(\mc{O}_U^H)$ is surjective, and therefore the desired map is nonzero, with $\mathcal{L}(Z_u,X)$ in its image.
\end{proof}

Since $\mc{GC}_w^{\bullet}$ is a complex of mixed Hodge modules, it is endowed with a Hodge filtration $F_{\bullet}$ and a weight filtration $W_{\bullet}$.
The next result gives our main tool to get information about the mixed Hodge module structure on local cohomology. The following result implies Theorem A.

\begin{theorem} \label{prop:degen}
Let $w\in W^I$, let $p\in \mathbb{Z}$, and let $q\geq 0$. The following is true about the Hodge and weight filtrations on local cohomology with support in $\Sv{w}$:
$$
F_p(\lc{Z_w}^{q}(\mc{O}_X^H))\cong \mc{H}^q(F_p(\mathcal{GC}_w^{\bullet})),\quad\tn{and}\quad W_p(\lc{Z_w}^{q}(\mc{O}_X^H))\cong \mc{H}^q(W_p(\mathcal{GC}_w^{\bullet})).
$$
Furthermore, the following is true for $i\in \mathbb{Z}$:
$$
F_pW_i(\lc{Z_w}^{q}(\mc{O}_X^H))\cong \mc{H}^q(F_pW_i(\mathcal{GC}_w^{\bullet})),
$$
where the left side is the intersection of the two filtrations.
\end{theorem}
\begin{proof}
This is an immediate consequence of the fact that morphisms in the category of mixed Hodge modules are bi-strict with respect to the Hodge and weight filtrations \cite[Section 8.3.3(m4)]{htt}.
\end{proof}


\subsection{Parabolic Verma modules, $P$-orbits, and localization}\label{sec:loc} Let $U(\mf{g})$ be the universal enveloping algebra of $\mf{g}$, and let $\mf{p}=\mf{p}_I$ be a parabolic subalgebra of $\mf{g}$ with Levi decomposition $\mf{p}=\mf{l}\oplus \mf{u}$ and adjoint group $P=P_I$. Let $\tn{mod}_P(\mathfrak{g},\chi_{0})$ denote the category of $P$-equivariant finitely-generated $U(\mathfrak{g})$-modules with trivial central character (see \cite[Section 11.5]{htt}). We write $\rho=\frac{1}{2}\sum_{\alpha\in \Phi^+}\alpha$.

For $w\in W^I$ we let $F_{w}$ denote the finite dimensional irreducible $\mf{l}$-representation with highest weight $ww_I\rho-\rho$, where $w_I$ is the longest element in $W_I$. We view $F_{w}$ as a representation of $\mf{p}$, with $\mf{u}$ acting trivially. The parabolic Verma module (sometimes called generalized Verma module) with highest weight $ww_I\rho-\rho$ is the left $U(\mathfrak{g})$-module  
\begin{equation}
M_I(w)=U(\g)\otimes _{U(\mf{p})}F_{w}.
\end{equation}
The module $M_I(w)$ is a quotient of the Verma module $M_{\emptyset}(w)$, and thus has unique simple quotient $L(w)$, the irreducible representation of $\g$ with highest weight $ww_I\rho-\rho$. We define the BGG dual of $M_I(w)$ as $N_I(w)=M_I(w)^{\vee}$ (see \cite[Section 3.2]{humphreys}). The module $N_I(w)$ is dual to $M_I(w)$ in the sense that they have the same composition factors, multiplicities of factors, and the submodule lattice of $N_I(w)$ is dual to that of $M_I(w)$ (see \cite[Section 3.2 Theorem]{humphreys}).

We now state the Beilinson--Bernstein theorem \cite{BB} for trivial central characters. Consider the natural left action of $P$ on the complete flag variety $G/B$, with orbits
\begin{equation}\label{porbits}
G/B=\bigcup_{w\in W^I}\mathbb{O}_w,\quad\tn{where}\quad \mathbb{O}_w=\bigcup_{z\in W_I} \Sc{wz},
\end{equation}
where $\Sc{y}$ denotes the $B$-orbit of $yB$ in $G/B$. Here, $\Sc{ww_I}$ is dense in $\mathbb{O}_w$, so that $\ol{O}_{ww_I}=\ol{\mathbb{O}}_w=\Sv{w}$ in $G/B$. For $w\in W^I$, we define the $\D_{G/B}$-modules:
\begin{equation}
\mathcal{N}_w=\lc{\mathbb{O}_w}^{c(w)}(\mc{O}_{G/B}),\quad \mc{M}_w=\mathbb{D}\mc{N}_w,\quad \mathcal{L}_w=\mc{L}(\mathbb{O}_w,G/B).
\end{equation}
For the following statements, we write $\tn{mod}_P(\D_{G/B})$ for the category of $P$-equivariant coherent $\D_{G/B}$-modules (see \cite[Section 2.1]{categories} or \cite[Section 11.5]{htt}), and we write $\tn{mod}_B(\D_{G/P})$ for the category of $B$-equivariant coherent $\D_{G/P}$-modules.
\begin{theorem}\label{local}
The following is true about $\tn{mod}_P(\D_{G/B})$.
\begin{enumerate}
\item The global sections functor $\Gamma(G/B,-)$ induces an equivalence of categories
$$
\tn{mod}_P(\D_{G/B})\cong \tn{mod}_P(\mathfrak{g},\chi_0).
$$	
\item Furthermore, given $w\in W^I$ we have
$$
\Gamma(G/B,\mathcal{N}_w)\cong N_I(w),\quad \Gamma(G/B,\mathcal{M}_w)\cong M_I(w),\quad \Gamma(G/B,\mathcal{L}_w)\cong L(w).
$$
\end{enumerate}
\end{theorem}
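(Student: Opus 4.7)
The plan is to reduce both parts to the Beilinson--Bernstein localization theorem \cite{BB} and its standard consequences for $P$-equivariant modules. For part (1), I would observe that the trivial central character corresponds via the Harish-Chandra isomorphism to the regular (anti-dominant) weight $-\rho$; consequently $G/B$ is $\D$-affine for $\D_{G/B}$-modules with this central character, so the global sections functor is exact and induces an equivalence between quasi-coherent $\D_{G/B}$-modules with trivial central character and $\tn{mod}(\mathfrak{g},\chi_{0})$. Since $\Gamma$ is $G$-linear, it preserves $P$-equivariant structures on both sides; restricting to finitely generated $P$-equivariant objects yields the claimed equivalence $\tn{mod}_P(\D_{G/B}) \cong \tn{mod}_P(\mathfrak{g},\chi_0)$.

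For part (2), the three identifications follow from a careful analysis of the geometry of the $P$-orbit $\mathbb{O}_w$ and its dense $B$-orbit $\Sc{ww_I}$. For $\mc{L}_w$: part (1) implies $\Gamma(G/B,\mc{L}_w)$ is an irreducible object in $\tn{mod}_P(\mathfrak{g},\chi_0)$; computing the $T$-weight of a highest weight vector, as read off from the fiber of $\mc{L}_w$ at $ww_IB/B$, identifies this irreducible as $L(w)$ with highest weight $ww_I\rho-\rho$. For $\mc{N}_w$: the $P$-orbit $\mathbb{O}_w \hookrightarrow G/B$ fibers over the Schubert cell $\Sc{w} \subseteq G/P$ as a $P/B$-bundle, and base change, together with the parabolic induction of the finite-dimensional $\mathfrak{l}$-representation $F_w$ from $\mathfrak{p}$ to $\mathfrak{g}$, realizes $\Gamma(G/B,\mc{N}_w)$ as the BGG dual parabolic Verma module $N_I(w)$. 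Finally, for $\mc{M}_w = \DD\mc{N}_w$: Verdier duality on holonomic $\D_{G/B}$-modules with trivial central character is intertwined by $\Gamma$ with the BGG duality functor on $\tn{mod}(\mathfrak{g},\chi_0)$, so $\Gamma(G/B,\mc{M}_w) \cong N_I(w)^{\vee} = M_I(w)$.

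The main obstacle is the direct identification of $\Gamma(G/B,\mc{N}_w)$ with $N_I(w)$: although part (1) together with the highest weight computation already pins $\Gamma(G/B,\mc{N}_w)$ down up to a module with the prescribed composition series, producing exactly the dual parabolic Verma module requires the explicit base-change/induction analysis that exploits the fibration $\mathbb{O}_w \to \Sc{w}$ with fiber $P/B$. Once this identification is secured, the other two cases follow formally: $\mc{M}_w \cong M_I(w)$ via Verdier/BGG duality, and $\mc{L}_w \cong L(w)$ because intersection cohomology modules are the unique irreducible quotients of costandard objects on both sides.
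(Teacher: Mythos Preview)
Your proposal is correct and follows essentially the same approach as the paper. The paper's proof is little more than a sequence of citations: part (1) is a special case of the Beilinson--Bernstein theorem \cite{BB} (as recorded in \cite[Theorem 11.5.3]{htt}), the first two isomorphisms in part (2) are \cite[(3.27) Lemma]{CC}, and the identification of $\Gamma(G/B,\mathcal{L}_w)$ with $L(w)$ is \cite[Proposition]{BB} (or \cite[Prop.~12.3.2]{htt}) together with the observation $\mathcal{L}(\mathbb{O}_w,G/B)=\mathcal{L}(O_{ww_I},G/B)$. Your outline---the fibration $\mathbb{O}_w\to O_w$ with fiber $P/B$ and base change for $\mathcal{N}_w$, Verdier/BGG duality for $\mathcal{M}_w$, and irreducibility plus a highest-weight computation for $\mathcal{L}_w$---is precisely the content of those references, so you have correctly unpacked what the paper leaves to citation.
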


\begin{proof}
The equivalence of categories in (1) is a special case of the Beilinson--Bernstein theorem \cite{BB} (see \cite[Theorem 11.5.3]{htt} for a textbook account). For the first two isomorphisms in (2), see \cite[(3.27) Lemma]{CC} and subsequent discussion. The final isomorphism follows from \cite[Proposition]{BB} (alternatively, 	\cite[Prop.12.3.2]{htt}, noting that $\mc{L}(\mathbb{O}_w,G/B)=\mc{L}(O_{ww_I},G/B)$). 
\end{proof}

For the remainder of the subsection we write $\pi^{\ast}$ for the (un-shifted) pullback functor of $\D$-modules \cite[Section 1.3]{htt} and we write $\pi_{\ast}$ for the direct image functor of quasi-coherent $\mc{O}$-modules.

\begin{theorem}\label{basechange}
Consider the natural projection $\pi: G/B \to G/P$. Then $\pi^{\ast}$ is exact and  induces an equivalence of categories 
$$
\tn{mod}_B(\D_{G/P})\cong \tn{mod}_P(\D_{G/B}).
$$	
Furthermore, given $w\in W^I$ we have
$$
\pi^{\ast}\lc{\Sc{w}}^{c(w)}(\mc{O}_{G/P})\cong \mc{N}_w,\quad \pi^{\ast}\big(\mathbb{D}\big(\lc{\Sc{w}}^{c(w)}(\mc{O}_{G/P})\big)\big)\cong \mc{M}_w,\quad \pi^{\ast}\mc{L}(\Sc{w},G/P)\cong \mathcal{L}_w.
$$

\end{theorem}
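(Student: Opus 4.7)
My plan is to verify exactness and equivariance, then the equivalence of categories by reduction to the Lie algebra side, and finally the three specific identifications.

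The projection $\pi:G/B\to G/P$ is a smooth $G$-equivariant fibration whose fibers are isomorphic to the Levi flag variety $P/B$ (of dimension $\ell(w_I)$). Smoothness of $\pi$ makes the non-shifted pullback $\pi^*$ exact on $\D$-modules and preserving of holonomicity (see \cite[Section 1.5]{htt}), while $G$-equivariance of $\pi$ ensures that $\pi^*$ intertwines $H$-equivariant structures for any closed subgroup $H\subseteq G$. So $\pi^*$ certainly carries $B$-equivariant $\D_{G/P}$-modules to $B$-equivariant $\D_{G/B}$-modules; the real content of the equivalence claim is that this $B$-equivariance on the pullback is automatically upgraded to $P$-equivariance.

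My preferred route for the equivalence is to reduce to the Lie algebra side rather than construct a quasi-inverse to $\pi^*$ by hand. Theorem \ref{local}(1) identifies $\tn{mod}_P(\D_{G/B})\cong \tn{mod}_P(\g,\chi_0)$, and the parabolic Beilinson--Bernstein equivalence (\cite{BB}; cf.\ \cite[Chapter 11]{htt}) identifies $\tn{mod}_B(\D_{G/P})\cong \tn{mod}_P(\g,\chi_0)$, where the $P$-equivariance on the Lie algebra side reflects local nilpotence of $\mf{p}/\mf{b}$ on $\Gamma(G/P,M)$ for $M\in\tn{mod}_B(\D_{G/P})$. Since $\pi_{\ast}\mc{O}_{G/B}=\mc{O}_{G/P}$ (using $H^0(P/B,\mc{O})=\C$ together with Kempf vanishing for the higher direct images), the projection formula gives $\Gamma(G/B,\pi^{\ast} M)=\Gamma(G/P,M)$, so $\pi^*$ intertwines the two global-sections equivalences and is therefore itself an equivalence of categories.

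For the three specific identifications, the key geometric input is the identity $\pi^{-1}(\Sc{w})=BwP/B=\mathbb{O}_w$, immediate from \eqref{porbits}, together with smoothness of $\pi$ and the codimension equality $\codim(\mathbb{O}_w,G/B)=\codim(\Sc{w},G/P)=c(w)$. Base change for the induced cartesian square of inclusions then yields $\pi^{\ast}\lc{\Sc{w}}^{c(w)}(\mc{O}_{G/P})\cong\lc{\mathbb{O}_w}^{c(w)}(\mc{O}_{G/B})=\mc{N}_w$. Combining this with compatibility of smooth pullback and Verdier duality (the shift by the relative dimension $\ell(w_I)$ and accompanying Tate twist are absorbed here because $\pi^*$ is exact on holonomic modules and both sides live in a single cohomological degree) gives the isomorphism for $\mc{M}_w=\DD\mc{N}_w$. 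The third isomorphism is the standard behavior of intersection cohomology under smooth pullback: $\pi^{\ast}\mc{L}(\Sc{w},G/P)$ is a simple $\D_{G/B}$-module with support $\mathbb{O}_w$ and generic rank one, hence equals $\mc{L}(\mathbb{O}_w,G/B)=\mc{L}_w$ by uniqueness. The main obstacle is the equivalence of categories; the Lie algebra detour side-steps the more delicate direct descent along the $P/B$-fibration, and any subtlety about central characters for parabolic localization is absorbed by the blanket restriction to the trivial central character $\chi_0$.
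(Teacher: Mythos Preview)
Your treatment of exactness and of the three specific isomorphisms is essentially fine and close to the paper's: the first uses $\pi^{-1}(\Sc{w})=\mathbb{O}_w$ and base change just as the paper does, your duality argument for $\mc{M}_w$ is a valid alternative to a second application of base change (the shift bookkeeping you wave away really does cancel since both sides are concentrated in a single degree), and your direct argument for $\mc{L}_w$ is a reasonable substitute for the paper's use of the already-established equivalence.

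The real issue is your route to the equivalence of categories. You invoke a parabolic Beilinson--Bernstein equivalence $\tn{mod}_B(\D_{G/P})\cong \tn{mod}_P(\mf g,\chi_0)$ as a black box, citing \cite{BB} and \cite[Chapter 11]{htt}. But \cite{BB} treats $G/B$, and the equivariant statement for $G/P$ that you need is exactly Corollary \ref{locP}(1) of the paper, which is \emph{deduced from} Theorem \ref{basechange}. So within the paper's logical structure your argument is circular: you are using the consequence of the theorem to prove the theorem. The global-sections computation $\Gamma(G/B,\pi^\ast M)\cong\Gamma(G/P,M)$ that you sketch is correct and is in fact how the paper proves Corollary \ref{locP}, but by itself it does not give the equivalence of the two equivariant $\D$-module categories; it only shows compatibility once both equivalences are already in hand. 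In particular, the step you flag as ``the real content'', namely the automatic upgrade of $B$-equivariance on $\pi^\ast M$ to $P$-equivariance, is never actually established in your argument.

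The paper avoids all of this by citing \cite[Proposition 4.5(b)]{categories}, a general statement about equivariant $\D$-modules under pullback along a homogeneous fibration, which gives the equivalence $\tn{mod}_B(\D_{G/P})\cong\tn{mod}_P(\D_{G/B})$ directly without passing through the Lie algebra side. Only afterwards, in Corollary \ref{locP}, does the paper compose with Theorem \ref{local} to obtain the parabolic Beilinson--Bernstein equivalence you tried to start from.
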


\begin{proof}
Since $\pi$ is smooth, the functor $\pi^{\ast}$ is exact \cite[Proposition 1.5.13]{htt}. By \cite[Proposition 4.5(b)]{categories}, the functor $\pi^{\ast}$ induces the desired equivalence of categories.

It follows from (\ref{porbits}) that $\pi^{-1}(O_w)=\mathbb{O}_w$, so base change \cite[Theorem 1.7.3]{htt} yields the first two isomorphisms. The third isomorphism follows from the first and the equivalence of categories.
\end{proof}

Combining Theorem \ref{local} and Theorem \ref{basechange}, we obtain the following.

\begin{corollary}\label{locP}
The following is true about $\tn{mod}_B(\D_{G/P})$.
\begin{enumerate}
\item The global sections functor $\Gamma(G/P,-)$ induces an equivalence of categories
$$
\tn{mod}_B(\D_{G/P})\cong \tn{mod}_P(\mathfrak{g},\chi_0).
$$	
\item Furthermore, given $w\in W^I$ we have
$$
\Gamma\big(G/P,\lc{\Sc{w}}^{c(w)}(\mc{O}_{G/P})\big)\cong N_I(w),\quad \Gamma\big(G/P,\mathbb{D}\big(\lc{\Sc{w}}^{c(w)}(\mc{O}_{G/P})\big)\big)\cong M_I(w),\quad \Gamma(G/P,\mathcal{L}_w)\cong L(w).
$$
\end{enumerate}

\end{corollary}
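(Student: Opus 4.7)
The plan is to derive the corollary by composing the two equivalences already established, so no new theorem is really needed; the content is the bookkeeping that identifies $\Gamma(G/P,-)$ with $\Gamma(G/B,\pi^{\ast}(-))$.

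For part (1), Theorem \ref{basechange} supplies an exact equivalence
$\pi^{\ast}:\tn{mod}_B(\D_{G/P})\xrightarrow{\sim}\tn{mod}_P(\D_{G/B})$,
and Theorem \ref{local}(1) supplies an exact equivalence
$\Gamma(G/B,-):\tn{mod}_P(\D_{G/B})\xrightarrow{\sim}\tn{mod}_P(\g,\chi_0)$.
Their composition is the functor $\Gamma(G/B,\pi^{\ast}(-))$, so the task reduces to identifying this with $\Gamma(G/P,-)$. The map $\pi$ is smooth and proper with connected fibers $P/B$, a complete homogeneous variety, and hence $\pi_{\ast}\mc{O}_{G/B}=\mc{O}_{G/P}$. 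The projection formula then gives $\pi_{\ast}\pi^{\ast}\mc{M}=\mc{M}$ for every quasi-coherent $\mc{M}$ on $G/P$, and in particular
$$
\Gamma(G/B,\pi^{\ast}\mc{M})=\Gamma(G/P,\pi_{\ast}\pi^{\ast}\mc{M})=\Gamma(G/P,\mc{M}).
$$
The $U(\g)$-module structures on the two sides coincide because $\pi$ is $G$-equivariant, so the $G$-action on both global sections functors is compatible with $\pi^{\ast}$.

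For part (2), apply the identification $\Gamma(G/P,-)=\Gamma(G/B,\pi^{\ast}(-))$ termwise. Theorem \ref{basechange} computes $\pi^{\ast}$ of each of the three $\D_{G/P}$-modules of interest as $\mc{N}_w$, $\mc{M}_w$, and $\mathcal{L}_w$ respectively, and Theorem \ref{local}(2) then computes their global sections on $G/B$ as $N_I(w)$, $M_I(w)$, and $L(w)$. Thus, for instance,
$$
\Gamma\big(G/P,\lc{\Sc{w}}^{c(w)}(\mc{O}_{G/P})\big)=\Gamma(G/B,\mc{N}_w)\cong N_I(w),
$$
and the other two isomorphisms follow identically.

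There is no genuine obstacle here; the only point that needs care is verifying that the projection-formula identification $\pi_{\ast}\pi^{\ast}=\tn{id}$ is compatible with the $\g$-module structures coming from Beilinson--Bernstein localization, which follows from the $G$-equivariance of $\pi$. Everything else is a formal consequence of the two previously established theorems.
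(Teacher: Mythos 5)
Your proposal is correct and follows essentially the same route as the paper: compose the equivalences of Theorems \ref{local} and \ref{basechange} and identify $\Gamma(G/P,-)$ with $\Gamma(G/B,\pi^{\ast}(-))$, then read off part (2) termwise. The only (immaterial) difference is how the identification $\Gamma(G/P,\mc{M})\cong\Gamma(G/B,\pi^{\ast}\mc{M})$ is justified: you use $\pi_{\ast}\mc{O}_{G/B}=\mc{O}_{G/P}$ and the projection formula for the locally trivial proper fibration $\pi$, whereas the paper cites the vanishing $\mathbb{R}^j\pi_{\ast}(\pi^{\ast}\mc{M})=0$ for $j>0$ from Holland--Polo together with degeneration of the Leray--Serre spectral sequence.
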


\begin{proof}
We follow the strategy described in \cite[footnote pg. 14]{borhoBry} (see also \cite[Proposition 2.14]{holland}). Let $\pi:G/B\to G/P$ be the  projection. For $\mc{M}\in \tn{mod}_B(\D_{G/P})$ we have that $\mathbb{R}^j\pi_{\ast}(\pi^{\ast}\mc{M})=0$ for $j>0$ (see \cite[Proposition 2.14(a)]{holland}). By Theorem \ref{local}(1) we have that the Leray--Serre spectral sequence
$$
H^i(G/P,\mathbb{R}^j\pi_{\ast}(\pi^{\ast}\mc{M}))\implies H^{i+j}(G/B,\pi^{\ast}\mc{M}),
$$
degenerates to yield an isomorphism $\Gamma(G/P,\mc{M})\cong \Gamma(G/B, \pi^{\ast}\mc{M})$ as $U(\mathfrak{g})$-modules. By Theorem \ref{local} and Theorem \ref{basechange} this proves both (1) and (2).
	\end{proof}

For $w\in W^I$ we consider the \defi{global Grothendieck--Cousin complex} $GC_w^{\bullet}=\Gamma(X,\mathcal{GC}^{\bullet}_w)$:
\begin{equation}
GC^{\bullet}_w:\quad 0  \longrightarrow N_I(w) \longrightarrow \bigoplus_{\substack{v\leq w\\ c(v)=c(w)+1}} N_I(v)\longrightarrow \cdots \longrightarrow \bigoplus_{\substack{v\leq w\\ c(w)=\ell(w_I)-1}} N_I(v) \longrightarrow N_I(e)\longrightarrow  0,
\end{equation}
We define the Hodge filtration $F_{\bullet}$ and weight filtration $W_{\bullet}$ on $N_I(w)$ via (see \cite[Theorem 1.1]{davis}):
\begin{equation}
F_p(N_I(w))=\Gamma(F_p(\lc{\Sc{w}}^{c(w)}(\mc{O}_{X}))),\quad \tn{and}\quad W_p(N_I(w))=\Gamma(W_p(\lc{\Sc{w}}^{c(w)}(\mc{O}_{X}))).
\end{equation} 
We endow $M_I(w)$ with filtrations in similar manner. The weight filtration on $M_I(w)$ is called the Jantzen filtration (see \cite{jantzen, humphreys}). Similarly, we define Hodge and weight filtrations on the global local cohomology modules for $q\geq 0$:
\begin{equation}
F_p(H_{Z_w}^{q}(X,\mathcal{O}_X))=\Gamma(F_p(\mathscr{H}^q_{Z_w}(\mc{O}_X))),\quad\tn{and}\quad W_p(H_{Z_w}^{q}(X,\mathcal{O}_X))=\Gamma(W_p(\mathscr{H}^q_{Z_w}(\mc{O}_X))).	
\end{equation}

By Theorem \ref{GCaComplex}, Corollary \ref{locP}, and \cite[Theorem 1.1]{davis} we obtain:

\begin{corollary}\label{globalGC}
\begin{enumerate}
\item For $w\in W^I$ we have that the morphisms in $GC^{\bullet}_w$ are strict with respect to the Hodge and weight filtrations. In particular,
$$
\tn{gr}^F_p(H_{Z_w}^{q}(X,\mathcal{O}_X))\cong \mc{H}^q(\tn{gr}^F_p(GC_w^{\bullet})),\quad \tn{and}\quad\tn{gr}^W_p(H_{Z_w}^{q}(X,\mathcal{O}_X))\cong \mc{H}^q(\tn{gr}^W_p(GC_w^{\bullet})).
$$

\item For $u<v \leq w$ with $j=c(v)=c(u)-1$, the differential in $GC^{\bullet}_{w}$ induces a nonzero morphism 
$$
N_I(v)\to N_I(u).
$$	

\item For $v\leq w$ the multiplicity of $L(v)$ as a composition factor of $\tn{gr}^W_p(H_{Z_w}^{q}(X,\mathcal{O}_X))$ is equal to the multiplicity of $\mc{L}(O_v,X)$ as a composition factor of $\tn{gr}^W_p(\mathscr{H}^q_{Z_w}(\mc{O}_X^H))$.

\end{enumerate}

\end{corollary}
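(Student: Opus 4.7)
The plan is to derive the three parts by transferring the corresponding sheaf-level results across the exact equivalence of categories $\Gamma(X,-): \tn{mod}_B(\D_X) \to \tn{mod}_P(\mathfrak{g}, \chi_0)$ of Corollary \ref{locP}, combined with the Hodge-filtered exactness of $\Gamma(X,-)$ supplied by \cite[Theorem 1.1]{davis}. Every term in $\mc{GC}^{\bullet}_w$ is a $B$-equivariant mixed Hodge module, so each $W_p(\mc{GC}^{\bullet}_w)$ is a subcomplex in $\tn{mod}_B(\D_X)$, on which $\Gamma(X,-)$ is exact, and each graded piece $\tn{gr}^F_p(\mc{GC}^{\bullet}_w)$ is a complex of coherent $\mc{O}_X$-modules, on which \cite{davis} applies.

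For part (1), the starting point is that morphisms in the category of mixed Hodge modules are strict with respect to $F_\bullet$ and $W_\bullet$ \cite[Section 8.3.3(m4)]{htt}, so the differentials in $\mc{GC}^{\bullet}_w$ are strict; this is what underlies Theorem \ref{prop:degen}. Since $\Gamma(X,-)$ is exact --- on weight pieces via Corollary \ref{locP} and on Hodge pieces via \cite[Theorem 1.1]{davis} --- it commutes with kernels, images, and intersections, so strictness transfers to the global complex $GC^{\bullet}_w$, and $\Gamma$ commutes with $\tn{gr}^F_p$, with $\tn{gr}^W_p$, and with $\mc{H}^q$. Chasing these identifications through Theorem \ref{prop:degen} yields
\[
\tn{gr}^F_p H^q_{Z_w}(X, \mc{O}_X) \cong \mc{H}^q(\tn{gr}^F_p GC^{\bullet}_w), \qquad \tn{gr}^W_p H^q_{Z_w}(X, \mc{O}_X) \cong \mc{H}^q(\tn{gr}^W_p GC^{\bullet}_w).
\]

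Part (2) reduces immediately to Theorem \ref{GCaComplex}(3): the differential in $\mc{GC}^{\bullet}_w$ induces a nonzero map $\lc{\Sc{v}}^{j}(\mc{O}_X^H) \to \lc{\Sc{u}}^{j+1}(\mc{O}_X^H)$, and applying the equivalence $\Gamma(X,-)$, which is in particular faithful, preserves non-vanishing and yields the claimed nonzero map $N_I(v) \to N_I(u)$. For part (3), the same equivalence carries $\mc{L}(\Sc{v}, X)$ to $L(v)$ and preserves Jordan--H\"older multiplicities because $\Gamma(X,-)$ is exact on $\tn{mod}_B(\D_X)$; combining this with the commutation of $\Gamma(X,-)$ and $\tn{gr}^W_p$ established in part (1) identifies the multiplicity of $L(v)$ in $\tn{gr}^W_p H^q_{Z_w}(X, \mc{O}_X)$ with the multiplicity of $\mc{L}(\Sc{v}, X)$ in $\tn{gr}^W_p \lc{Z_w}^q(\mc{O}_X^H)$.

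The main non-formal ingredient, and thus the expected obstacle, is the Hodge-filtered exactness of $\Gamma(X,-)$ invoked in part (1): without \cite[Theorem 1.1]{davis} one only obtains the analogous statement for the weight filtration, since the terms $F_p(\mc{GC}^{\bullet}_w)$ are merely coherent $\mc{O}_X$-modules and could in principle carry higher cohomology. Everything else is a routine transfer across the Beilinson--Bernstein equivalence once the sheaf-level strictness encoded in Theorem \ref{prop:degen} is in hand.
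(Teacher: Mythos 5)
Your proposal is correct and follows the same route as the paper, which deduces the corollary precisely from Theorem \ref{GCaComplex}, the exact equivalence of Corollary \ref{locP}, and the Hodge-filtered exactness of $\Gamma(X,-)$ from \cite[Theorem 1.1]{davis}; you have simply spelled out the transfer of strictness, non-vanishing, and multiplicities across the equivalence, which the paper leaves implicit. You also correctly identify the Davis--Vilonen result as the only non-formal ingredient needed beyond the weight-filtration case.
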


We will not pursue the Hodge filtration further in this work.

\section{Calculations for the Grassmannian} \label{sec:Grass}

Let $X=\tn{Gr}(k,n)$ be the Grassmannian of $k$-dimensional subspaces of an $n$-dimensional complex vector space, so that $d_X=k(n-k)$. We prove Theorem \ref{mainthm} and apply it in Section \ref{sec:det} to calculate local cohomology with support in determinantal varieties. We freely use notation from Section \ref{sec:state}. 

\subsection{The Grassmannian and minimal length coset representatives}\label{sec:young}  Let $\mf{g}=\mf{sl}_n(\C)$  with simple roots $\Delta=\{\alpha_1,\cdots,\alpha_{n-1}\}$, where $\alpha_i=\epsilon_i-\epsilon_{i+1}$. Let $I=\Delta\setminus \{\alpha_k\}$ and let $\mf{p}=\mf{p}_I$. We write $G=\operatorname{SL}_n(\C)$ and $P$ for the groups corresponding to $\mf{g}$ and $\mf{p}$, so that $X\cong G/P$.

The set of minimal length coset representatives of $W/W_I$ is:
\begin{equation}
W^I=\{(i_1,\cdots ,i_n)\in \mathfrak{S}_n\mid i_1<\cdots <i_k,\quad i_{k+1}<\cdots<i_n\},
\end{equation}
where $\mf{S}_n$ is the symmetric group on $n$ elements. In our notation, the identity $e$ is represented by $(1,2,\cdots, n)$, and the longest element $w_{\circ}(I)$ is represented by $(n-k+1,\cdots, n,1,\cdots ,n-k)$. To each $w=(i_1,\cdots,i_n)\in W^I$ we associate a Young diagram $\a=\a(w)$:
$$
a_1=i_k-k,\quad a_2=i_{k-1}-(k-1),\quad \cdots, \quad a_k=i_1-1,
$$
i.e. $a_j=i_{k+1-j}-(k+1-j)$ for $1\leq j\leq k$ and $a_j=0$ for $j>k$. For instance, we have
$$
e \;\longleftrightarrow \;(0,\cdots,0),\quad \tn{and}\quad w_{\circ}(I)\; \longleftrightarrow \; (n-k,\cdots,n-k).
$$
We obtain a bijection between $W^I$ and the set
\begin{equation}
\mathcal{R}_{k,n}=\{\a \mid n-k\geq a_1\geq a_2\geq \cdots \geq a_k\geq 0\}.	
\end{equation}
We think of $\mathcal{R}_{k,n}$ as the set of Young diagrams that fit into a $k\times (n-k)$ rectangle.

Given $w\in W^I$ with $\a=\a(w)$ we write $\Sc{\a}=\Sc{w}$ and $\Sv{\a}=\Sv{w}$ for the corresponding Schubert cell and Schubert variety. We have $\dim \Sv{\a}=|\a|=\ell(w)$. We write $M(\a)=M_I(w)$, $N(\a)=N_I(w)$, $L(\a)=L_I(w)$, so that $M(\a)$ and $L(\a)$ have highest weight
\begin{equation}
\lambda_{\a,n}:=ww_I\rho-\rho=(-a_k^c,-a_{k-1}^c,\cdots, -a_1^c\mid (a^c)_1',(a^c)_2',\cdots, (a^c)_{n-k}'),
\end{equation}
where $\underline{a}^c=(n-k-a_k,\cdots,n-k-a_1)$ is the complementary partition of $\a$ in $\mathcal{R}_{k,n}$ and $(\a^c)'$ is the conjugate partition to $\a^c$. We use the notation $\lambda_{\a,n}$ to emphasize the dependence on $n$.

\subsection{Properties of the parabolic Verma modules}

Let $\a\in \mc{R}_{k,n}$. We freely identify the modules $N(\a)$, $M(\a)$, $L(\a)$ with their $\D_X$-module theoretic counterparts via Corollary \ref{locP}, i.e.
$$
 N(\a)\;\longleftrightarrow \;\mathscr{H}^{c(\a)}_{\Sc{\a}}(\mc{O}_X),\quad M(\a)\;\longleftrightarrow \; \mathbb{D}\big(\mathscr{H}^{c(\a)}_{\Sc{\a}}(\mc{O}_X)\big),\quad L(\a)\;\longleftrightarrow \; \mathcal{L}(\Sc{\a},X),
$$
and we recall that $M(\a)$ and $N(\a)$ are endowed with weight filtrations $W_{\bullet}$ via this equivalence.

A Loewy filtration on a finite length $U(\mf{g})$-module $M$ is a filtration of minimal length such that the subquotients are semi-simple. Two examples of Loewy filtrations are the socle and radical filtrations (see \cite[Section 8.14]{humphreys}). We say that $M$ is rigid if the socle and radical filtrations coincide, in which case there is a unique Loewy filtration on $M$. For $p\in \mathbb{Z}$ we write
\begin{equation}
\mathcal{Z}_p(\a)=\{\mathbb{D}\in Dyck(\a) \mid |\mathbb{D}|=p+|\a|-2d_{X} \}.
\end{equation}
We emphasize that elements of $\mathcal{Z}_p(\a)$ may have Dyck paths of length one, and they have no bullets. Recall that $\a^{\mathbb{D}}$ is the partition obtained from $\a$ by removing the boxes in the support of $\mathbb{D}$ (see (\ref{defDa})).

The following result is a collection of facts from the literature. 

\begin{theorem}\label{keylemma}
The modules $M(\a)$ and $N(\a)$ are rigid, and the weight filtration $W_{\bullet}$ is the unique Loewy filtration. For $p\in \mathbb{Z}$ the graded pieces of the weight filtration are given by
$$
 \tn{gr}_p^W(M(\a))=\bigoplus_{\mathbb{D}\in \mathcal{Z}_{-p}(\a)} L(\a^{\mathbb{D}}),\quad\tn{and}\quad \tn{gr}_p^W(N(\a))=\bigoplus_{\mathbb{D}\in \mathcal{Z}_p(\a)} L(\a^{\mathbb{D}}).
$$
In particular, each composition factor of $M(\a)$ and $N(\a)$ appears with multiplicity one. 
\end{theorem}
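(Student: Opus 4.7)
The plan is to combine three classical inputs: Kazhdan--Lusztig theory for the weight (equivalently, Jantzen) filtration, the explicit combinatorics of KL polynomials for the Grassmannian, and rigidity of parabolic Verma modules in the Hermitian symmetric setting. I would assemble them as follows.

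First, by the results invoked as \cite{CC, KT}, the graded pieces $\tn{gr}^W_p M(\aa)$ and $\tn{gr}^W_p N(\aa)$ are semi-simple, and the multiplicity of $L(\bb)$ in each is controlled by a coefficient of the inverse parabolic Kazhdan--Lusztig polynomial $\tilde{n}_{\aa,\bb}(q)$. Second, for the Grassmannian these inverse parabolic KL polynomials are known in closed form: each $\tilde{n}_{\aa,\bb}(q)$ is either zero or a monomial $q^r$ with coefficient one, where $r$ is the number of Dyck paths in the unique admissible bulletless Dyck pattern $\mathbb{D}\in Dyck(\aa)$ satisfying $\aa^{\mathbb{D}}=\bb$; this monomial description of KL polynomials for Grassmannian permutations goes back to Lascoux--Sch\"utzenberger and is compatible with the Dyck pattern reformulation of \cite{thick} in the conventions used here. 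Third, parabolic Verma modules attached to Hermitian symmetric pairs, of which the Grassmannian case is one, are rigid in the sense that their socle and radical filtrations coincide, by a theorem of Irving.

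Assembling these, the explicit KL formula produces a semi-simple decomposition of $\tn{gr}^W_p N(\aa)$ whose summands are indexed by bulletless Dyck patterns $\mathbb{D}$ in $\aa$, with the composition factor $L(\aa^{\mathbb{D}})$ appearing in a degree determined by $|\mathbb{D}|$. To pin down the shift matching $\mc{Z}_p(\aa)$, I would use the extreme case $\mathbb{D}=\emptyset$: then $\aa^{\mathbb{D}}=\aa$ and the corresponding composition factor is the pure IC piece, which by the normalization of $N(\aa)=\lc{\Sc{\aa}}^{c(\aa)}(\mc{O}_X^H)$ sits in the top weight $2d_X-|\aa|$. This forces the relation $p=2d_X-|\aa|+|\mathbb{D}|$, exactly the defining condition of $\mc{Z}_p(\aa)$. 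The dual formula for $M(\aa)$ follows from $M(\aa)=N(\aa)^{\vee}$ together with the fact that BGG duality reverses the weight filtration around the pure weight, producing the sign flip $\mc{Z}_p\leftrightarrow \mc{Z}_{-p}$. Multiplicity one across the whole module then reduces to a purely combinatorial claim: distinct admissible bulletless Dyck patterns in $Dyck(\aa)$ yield distinct complements $\aa^{\mathbb{D}}$. Finally, Irving's rigidity theorem, combined with the fact that $W_\bullet$ has semi-simple subquotients and is therefore a Loewy filtration, forces $W_\bullet$ to coincide with the unique Loewy filtration on both $M(\aa)$ and $N(\aa)$.

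The main obstacle I anticipate is bookkeeping: cross-calibrating the conventions of \cite{CC, KT} for the inverse parabolic KL polynomials with the Dyck pattern combinatorics of \cite{thick}, and verifying the small combinatorial lemma that two distinct admissible bulletless Dyck patterns in $Dyck(\aa)$ cannot have the same support (so that the summands of $\tn{gr}^W_p N(\aa)$ are genuinely non-isomorphic). The substantive mathematical inputs --- the KL description of the weight filtration, the monomial form of the KL polynomials for Grassmannian permutations, and rigidity for Hermitian symmetric pairs --- are all classical and can be quoted from the literature.
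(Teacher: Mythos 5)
Your proposal is correct and follows essentially the same route as the paper: both assemble the Kazhdan--Lusztig description of the weight filtration from \cite{CC, KT}, the monomial/Dyck-pattern form of the relevant inverse parabolic KL polynomials for the Grassmannian (the paper cites \cite{path}, with \cite{lascoux} as the original source), rigidity from Collingwood--Irving--Shelton \cite{cisWeight}, the normalization via the top factor $L(\a)$, and duality to pass from $N(\a)$ to $M(\a)$. The one imprecision is the step ``$W_{\bullet}$ has semi-simple subquotients and is therefore a Loewy filtration'' --- semisimplicity of the layers does not by itself give minimality of length --- but this is harmless because the weight layers you compute from \cite{KT} coincide with the radical/socle layers described by the same KL data in \cite[Corollary 7.1.3]{irvingBook}, which is exactly how the paper concludes that $W_{\bullet}$ is the unique Loewy filtration.
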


\begin{proof}
The composition factors of $M(\a)$ (and hence $N(\a)$) are described in \cite[Theorem 1.3]{cisWeight}. The authors also show that the socle, radical, and \textit{$\ell$-adic weight} filtrations coincide and are the unique Loewy filtration \cite[Corollary 1.7]{cisWeight}.  The graded pieces of the Loewy filtration are interpreted in terms of inverse parabolic Kazhdan--Lusztig polynomials in \cite[Corollary 7.1.3]{irvingBook}. For the Dyck pattern interpretation of these inverse Kazhdan--Lusztig polynomials, see \cite[Corollary 1]{path} (see also \cite[Section 3.1]{path}). In \cite[Theorem 1.4]{KT} it is shown that these inverse Kazhdan--Lusztig polynomials also describe the weight filtration on $N(\a)$ (and hence $M(\a)$). Thus, the weight filtration is the unique Loewy filtration. We have normalized the shift on $W_{\bullet}$ so that $L(\a)$ lives in weight $p=2d_{\GG}-|\a|$ of $N(\a)$. 
\end{proof}

We remark that the original combinatorial description of the relevant Kazhdan--Lusztig polynomials  may be found in \cite{lascoux}.

\begin{example}\label{ex:comp} Let $\a=(3,2,2)$. The following example works equally well for any $k,n$ for which $\a\in \mc{R}_{k,n}$. For concreteness, let $k=3$ and $n=6$, so that $d_X=9$. The elements of $Dyck(\a)$ are:\\

\begin{center}
\begin{minipage}{.18\textwidth}
\centering 
\begin{tikzpicture}[x=\unitsize,y=\unitsize,baseline=0]
\tikzset{vertex/.style={}}%
\tikzset{edge/.style={  thick}}%
\draw[edge] (0,0) -- (4,0);
\draw[edge] (0,2) -- (4,2);
\draw[edge] (0,4) -- (6,4);
\draw[edge] (0,6) -- (6,6);

\draw[edge] (0,0) -- (0,6);
\draw[edge] (2,0) -- (2,6);
\draw[edge] (4,0) -- (4,6);
\draw[edge] (6,4) -- (6,6);
\end{tikzpicture}
\captionsetup{labelformat=empty}	
\captionof{figure}{$(3,2,2)$}	
\end{minipage}
\quad\quad
\begin{minipage}{.18\textwidth}
\centering 
\begin{tikzpicture}[x=\unitsize,y=\unitsize,baseline=0]
\tikzset{vertex/.style={}}%
\tikzset{edge/.style={  thick}}%
\draw[edge] (0,0) -- (4,0);
\draw[edge] (0,2) -- (4,2);
\draw[edge] (0,4) -- (6,4);
\draw[edge] (0,6) -- (6,6);
\draw[color=red,fill=red] (4.5,4.5) rectangle (5.5,5.5);
\draw[edge] (0,0) -- (0,6);
\draw[edge] (2,0) -- (2,6);
\draw[edge] (4,0) -- (4,6);
\draw[edge] (6,4) -- (6,6);
\end{tikzpicture}
\captionsetup{labelformat=empty}	
\captionof{figure}{$(2,2,2)$}
\end{minipage}
\quad\quad
\begin{minipage}{.18\textwidth}
\centering 
\begin{tikzpicture}[x=\unitsize,y=\unitsize,baseline=0]
\tikzset{vertex/.style={}}%
\tikzset{edge/.style={  thick}}%
\draw[edge] (0,0) -- (4,0);
\draw[edge] (0,2) -- (4,2);
\draw[edge] (0,4) -- (6,4);
\draw[edge] (0,6) -- (6,6);
\draw[color=red,fill=red] (2.5,0.5) rectangle (3.5,1.5);
\draw[edge] (0,0) -- (0,6);
\draw[edge] (2,0) -- (2,6);
\draw[edge] (4,0) -- (4,6);
\draw[edge] (6,4) -- (6,6);
\end{tikzpicture}
\captionsetup{labelformat=empty}	
\captionof{figure}{$(3,2,1)$}
\end{minipage}
\medskip
\medskip

\begin{minipage}{.18\textwidth}
\centering 
\begin{tikzpicture}[x=\unitsize,y=\unitsize,baseline=0]
\tikzset{vertex/.style={}}%
\tikzset{edge/.style={  thick}}%
\draw[edge] (0,0) -- (4,0);
\draw[edge] (0,2) -- (4,2);
\draw[edge] (0,4) -- (6,4);
\draw[edge] (0,6) -- (6,6);
\draw[color=red, fill=red] (2.5,0.5) rectangle (3.5,1.5);
\draw[edge] (0,0) -- (0,6);
\draw[edge] (2,0) -- (2,6);
\draw[edge] (4,0) -- (4,6);
\draw[edge] (6,4) -- (6,6);
\draw[red, line width=6pt] (1,0) -- (1,3) -- (4,3) ;
\end{tikzpicture}
\captionsetup{labelformat=empty}	
\captionof{figure}{$(3,0,0)$}	
\end{minipage}
\quad\quad
\begin{minipage}{.18\textwidth}
\centering 
\begin{tikzpicture}[x=\unitsize,y=\unitsize,baseline=0]
\tikzset{vertex/.style={}}%
\tikzset{edge/.style={  thick}}%
\draw[edge] (0,0) -- (4,0);
\draw[edge] (0,2) -- (4,2);
\draw[edge] (0,4) -- (6,4);
\draw[edge] (0,6) -- (6,6);
\draw[color=red,fill=red] (2.5,0.5) rectangle (3.5,1.5);
\draw[color=red,fill=red] (4.5,4.5) rectangle (5.5,5.5);
\draw[edge] (0,0) -- (0,6);
\draw[edge] (2,0) -- (2,6);
\draw[edge] (4,0) -- (4,6);
\draw[edge] (6,4) -- (6,6);
\end{tikzpicture}
\captionsetup{labelformat=empty}	
\captionof{figure}{$(2,2,1)$}
\end{minipage}
\quad\quad
\begin{minipage}{.18\textwidth}
\centering 
\begin{tikzpicture}[x=\unitsize,y=\unitsize,baseline=0]
\tikzset{vertex/.style={}}%
\tikzset{edge/.style={  thick}}%
\draw[edge] (0,0) -- (4,0);
\draw[edge] (0,2) -- (4,2);
\draw[edge] (0,4) -- (6,4);
\draw[edge] (0,6) -- (6,6);
\draw[color=red,fill=red] (2.5,0.5) rectangle (3.5,1.5);
\draw[edge] (0,0) -- (0,6);
\draw[edge] (2,0) -- (2,6);
\draw[edge] (4,0) -- (4,6);
\draw[edge] (6,4) -- (6,6);
\draw[red, line width=6pt] (1,0) -- (1,3) -- (3,3) -- (3,5) -- (6,5);
\end{tikzpicture}
\captionsetup{labelformat=empty}	
\captionof{figure}{$(1,0,0)$}
\end{minipage}
\quad\quad
\begin{minipage}{.18\textwidth}
\centering 
\begin{tikzpicture}[x=\unitsize,y=\unitsize,baseline=0]
\tikzset{vertex/.style={}}%
\tikzset{edge/.style={  thick}}%
\draw[edge] (0,0) -- (4,0);
\draw[edge] (0,2) -- (4,2);
\draw[edge] (0,4) -- (6,4);
\draw[edge] (0,6) -- (6,6);
\draw[color=red,fill=red] (2.5,0.5) rectangle (3.5,1.5);
\draw[color=red,fill=red] (4.5,4.5) rectangle (5.5,5.5);

\draw[edge] (0,0) -- (0,6);
\draw[edge] (2,0) -- (2,6);
\draw[edge] (4,0) -- (4,6);
\draw[edge] (6,4) -- (6,6);
\draw[red, line width=6pt] (1,0) -- (1,3) -- (4,3) ;

\end{tikzpicture}
\captionsetup{labelformat=empty}	
\captionof{figure}{$(2,0,0)$}
\end{minipage}

\end{center}

\medskip
It follows from Theorem \ref{keylemma} that the weight filtration on $N(\a)$ is given by:
$$
\operatorname{gr}^W_{11}(N(\a))=L(3,2,2),\quad\quad \operatorname{gr}^W_{12}(N(\a))=L(2,2,2)\oplus L(3,2,1), 
$$
$$
\operatorname{gr}^W_{13}(N(\a))=L(3,0,0)\oplus L(2,2,1)\oplus L(1,0,0),\quad \quad \operatorname{gr}^W_{14}(N(\a))=L(2,0,0).
$$
The composition factors of $M(\a)$ are the same and $\operatorname{gr}_p^W(M(\a))=\operatorname{gr}^W_{-p}(N(\a))$ for all $p\in \mathbb{Z}$.\hfill\mbox{$\diamond$}		
\end{example}

For a larger example, see \cite[Example 2.19]{amy} for composition factors of $N(5,4,2,1)$ when $k=4$, $n= 9$, noting that loc. cit. uses the complementary convention for partitions. In particular, the simple $L(2,2)$ (shown in the bottom right) has weight $p=32$, which is maximal in $N(5,4,2,1)$.

Given $\mathbb{D}\in Dyck(\a)$, we write $v_{\a}^{\mathbb{D}}$ for any vector $v$ in $M(\a)$ such that the class of $v$ in a quotient of $M(\a)$ is a highest weight vector for $L(\a^{\mathbb{D}})$. Such vectors are called primitive vectors. For two Dyck patterns with no bullets $\mathbb{D},\mathbb{D}'\in Dyck(\a)$, we write $\mathbb{D}\subseteq \mathbb{D}'$ if all paths in $\mathbb{D}$ are also paths in $\mathbb{D}'$.

\begin{theorem}\label{thm:submod}
Let $\mathbb{D}$, $\mathbb{D}'\in Dyck(\a)$. The following is true about $M(\a)$.
\begin{enumerate}
\item If $\mathbb{D}\subseteq \mathbb{D}'$ then $v^{\mathbb{D}'}_{\a}\in U(\mf{g})	v_{\a}^{\mathbb{D}}$.
\item If $\mathbb{D}=\{(x,y)\}$ and $v^{\mathbb{D}'}_{\a}\in U(\mf{g})	v_{\a}^{\mathbb{D}}$, then $\mathbb{D}\subseteq \mathbb{D}'$.
\end{enumerate}
\end{theorem}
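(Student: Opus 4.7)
The plan is to deduce both parts from the rigidity of $M(\a)$ and the multiplicity-one property of its composition factors established in Theorem~\ref{keylemma}. Since $L(\a^{\mathbb{D}'})$ occurs with multiplicity one in $M(\a)$, the primitive vector $v_\a^{\mathbb{D}'}$ is determined up to scalar (modulo submodules not containing $L(\a^{\mathbb{D}'})$), and the containment $v_\a^{\mathbb{D}'}\in U(\mf{g})v_\a^{\mathbb{D}}$ is equivalent to $L(\a^{\mathbb{D}'})$ appearing as a composition factor of the submodule $U(\mf{g})v_\a^{\mathbb{D}}$. Both parts therefore reduce to identifying composition factors of submodules generated by primitive vectors, which in view of rigidity is controlled by the $\operatorname{Ext}^1$-quiver of category $\mc{O}$ as encoded via Dyck-pattern combinatorics in \cite{path, irvingBook}.

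For Part~(1), I would induct on $|\mathbb{D}'|-|\mathbb{D}|$. The base case $\mathbb{D}=\mathbb{D}'$ is trivial. For the inductive step, one chooses $\widetilde{\mathbb{D}}$ covering $\mathbb{D}$ in the poset $(Dyck(\a),\subseteq)$ with $\widetilde{\mathbb{D}}\subseteq\mathbb{D}'$, so that it suffices to show $v_\a^{\widetilde{\mathbb{D}}}\in U(\mf{g})v_\a^{\mathbb{D}}$. Such a cover has $|\widetilde{\mathbb{D}}|=|\mathbb{D}|+1$, which by Theorem~\ref{keylemma} places $L(\a^{\widetilde{\mathbb{D}}})$ exactly one Loewy level below $L(\a^{\mathbb{D}})$ in $M(\a)$. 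The relevant $\operatorname{Ext}^1$ is witnessed by the inverse parabolic Kazhdan--Lusztig coefficient at $q^1$ along this cover, and by rigidity of $M(\a)$ the resulting nontrivial extension is realized inside $M(\a)$ itself, so $L(\a^{\widetilde{\mathbb{D}}})$ sits in the radical of $U(\mf{g})v_\a^{\mathbb{D}}$. Chaining these single-step containments across a maximal chain from $\mathbb{D}$ to $\mathbb{D}'$ in $(Dyck(\a),\subseteq)$ proves the claim.

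For Part~(2), I would analyze the quotient $Q=M(\a)/U(\mf{g})v_\a^{\mathbb{D}}$ and aim to show its composition factors are exactly $\{L(\a^{\mathbb{D}'}) : (x,y)\notin \operatorname{supp}(\mathbb{D}')\}$. Since $\mathbb{D}=\{(x,y)\}$ consists of a single box, this amounts to the combinatorial assertion that in the Loewy filtration of $M(\a)$, starting from the head $L(\a)$ one cannot reach $L(\a^{\mathbb{D}'})$ without passing through $L(\a^{\mathbb{D}})$ whenever $(x,y)\in\operatorname{supp}(\mathbb{D}')$. This reachability is dictated by the covering condition in the definition of an admissible Dyck pattern, which forces any admissible $\mathbb{D}'$ containing $(x,y)$ in its support to admit an intermediate admissible pattern refining the single-box pattern $\{(x,y)\}$. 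Granted the identification of composition factors of $Q$, the hypothesis $v_\a^{\mathbb{D}'}\in U(\mf{g})v_\a^{\mathbb{D}}$ forces $L(\a^{\mathbb{D}'})\notin Q$, whence $(x,y)\in\mathbb{D}'$ and $\mathbb{D}\subseteq\mathbb{D}'$.

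The principal obstacle is the precise matching of the submodule lattice of $M(\a)$ with the lattice of order filters in $(Dyck(\a),\subseteq)$. For Part~(1), this requires verifying that every cover in the Dyck poset corresponds to a nonzero $\operatorname{Ext}^1$ realized in adjacent Loewy layers of $M(\a)$; for Part~(2), restricting to single-box $\mathbb{D}$ sidesteps the full classification, since one only needs to separate Dyck patterns by whether or not $(x,y)$ lies in their support, an asymmetry enforced directly by admissibility. I expect the most delicate point to be the careful translation between the combinatorial covering relation and the representation-theoretic one, drawing on the Dyck-pattern interpretation of inverse Kazhdan--Lusztig polynomials in \cite{path} and the rigidity/submodule-lattice analysis of \cite{irvingBook}.
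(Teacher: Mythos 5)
There is a genuine gap, and it is precisely the point the theorem exists to address. Your strategy reduces everything to rigidity plus multiplicity-freeness (Theorem \ref{keylemma}) together with $\Ext^1$/Kazhdan--Lusztig data between adjacent Loewy layers, i.e. it presumes that the submodule lattice of $M(\a)$ — which primitive vectors generate which — can be read off from its unique Loewy filtration. That presumption fails once the Loewy length exceeds three. The warning immediately after the statement of Theorem \ref{thm:submod} is a concrete counterexample to the heuristic you rely on: in Example \ref{ex:comp} a primitive vector for $L(2,0,0)$ generates a primitive vector for $L(1,0,0)$ even though the corresponding Dyck patterns are not nested (a ``path with bridges''), so reachability in the poset $(Dyck(\a),\subseteq)$ does not match the generation relation. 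This breaks your Part (2): the composition factors of $M(\a)/U(\mf{g})v_{\a}^{\mathbb{D}}$ are \emph{not} simply those $L(\a^{\mathbb{D}'})$ with $(x,y)\notin\operatorname{supp}(\mathbb{D}')$ ``reached through'' $\{(x,y)\}$, and your claimed identification would give wrong answers for general patterns. It also undermines Part (1): the step ``nonzero $\Ext^1$ between adjacent Loewy layers, hence by rigidity $L(\a^{\widetilde{\mathbb{D}}})$ lies in $U(\mf{g})v_{\a}^{\mathbb{D}}$'' is not a valid inference — a nonsplit extension between two layers of $M(\a)$ need not be realized below the particular primitive vector $v_{\a}^{\mathbb{D}}$. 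The closing remark of Section \ref{sec:maincalc} makes the obstruction explicit: the Loewy filtration determines the submodule lattice only when the Loewy length is at most three, which is exactly why no analogue of Theorem \ref{thm:submod} is currently available for all Hermitian pairs. A further unaddressed point in Part (1) is the existence of your chain: it is not shown that every inclusion $\mathbb{D}\subseteq\mathbb{D}'$ of admissible patterns refines to a chain of \emph{admissible} patterns adding one Dyck path at a time.

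For contrast, the paper does not argue inside $M(\a)$ at all: it passes to the infinite-rank parabolic Verma module via truncation functors, applies super duality to identify it with a Kac module over $\mf{gl}_{k|\infty}$, translates admissible Dyck patterns into the ``left paths'' of \cite{super}, and then imports the known submodule lattice of Kac modules from \cite[Theorem 5.18]{super}. Part (1) is the subpath statement there; Part (2) is an induction on $|\mathbb{D}'|$ exploiting that a single box is a ``path without bridges'', and it must handle the alternative — invisible in your analysis — that a generated primitive vector arises by splitting a path of the intermediate pattern into two. To salvage a route avoiding super duality you would need an independent determination of the submodule lattice of $M(\a)$ (or at least of the submodules generated by primitive vectors), not just its Loewy filtration and the $\Ext^1$-quiver.
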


 We emphasize that the converse to (1) is false in general, see \cite[Theorem 5.18]{super} or \cite{beMorphisms}. For instance, in Example \ref{ex:comp}, a primitive vector for $L(2,0,0)$ generates a primitive vector for $L(1,0,0)$. In the terminology of \cite{super}, the Dyck pattern corresponding to $L(1,0,0)$ is a ``path with bridges".

\begin{proof}
The best reference for this information is obtained via \textit{super duality} (see, for example, \cite[Section 3.3]{super}) which relates parabolic Verma modules over an infinite-dimensional general linear Lie algebra to Kac modules over an infinite-dimensional general linear Lie superalgebra. Once we have set up notation, Theorem \ref{thm:submod} will follow readily from \cite[Theorem 5.18]{super}.

For each $r\geq n-k$ we consider the general linear Lie algebra $\mf{gl}_{k+r}$ with the parabolic subalgebra $\mf{p}_{k+r}$ with Levi subalgebra $\mf{gl}_{k}\times \mf{gl}_{r}$. Let $\a$ be as above, and consider the parabolic Verma module $M^{k+r}(\lambda_{\a})$ over $\mf{gl}_{k+r}$ with highest weight (see Section \ref{sec:young})
$$
\lambda_{\a,n}=(-a_k^c,-a_{k-1}^c,\cdots, -a_1^c\mid (a^c)_1',(a^c)_2',\cdots, (a^c)_r'),
$$ 
so that $M^{n}(\lambda_{\a,n})=M(\a)$.
 We consider the irreducible $\mf{gl}_{k+r}$-module $L^{k+r}(\lambda_{\a,n})$ of highest weight $\lambda_{\a,n}$, so that $L^{n}(\lambda_{\a,n})=L(\a)$. Taking the limit as $r\to \infty$, we obtain $\mf{gl}_{k+\infty}$ and the modules $M^{k+\infty}(\lambda_{\a,n})$, $L^{k+\infty}(\lambda_{\a,n })$. Write $\mc{O}^{\mf{p}_{k+r}}_{\tn{int}}$, $\mc{O}^{\mf{p}_{k+\infty}}_{\tn{int}}$ for the corresponding parabolic categories $\mc{O}$ with integral weights. Similar to Theorem \ref{keylemma}, $M^{k+\infty}(\lambda_{\a,n})$ has composition factors $L^{k+\infty}(-b_k,-b_{k-1},\cdots,-b_1|b_1',b_2',\cdots)$, where $\b=(b_1,\cdots,b_k)$ is obtained from $\a^c$ by \textit{adding} an admissible Dyck pattern with no bullets (over an infinite dimensional space, it is more convenient to work with $\a^c$ rather than $\a$). We have an exact truncation functor $\operatorname{tr}_{n-k}:\mc{O}^{\mf{p}_{k+\infty}}_{\tn{int}}\to \mc{O}^{\mf{p}_{n}}_{\tn{int}}$ sending $M^{k+\infty}(\lambda_{\a,n})$ to $M(\a)$ and $L^{k+\infty}(\lambda_{\a,n})$ to $L(\a)$. In particular, $\operatorname{tr}_{n-k}L^{k+\infty}(-b_k,-b_{k-1},\cdots,-b_1|b_1',b_2',\cdots)$ is nonzero if and only if $\b\in \mc{R}_{k,n}$, in which case $\operatorname{tr}_{n-k}L^{k+\infty}(-b_k,-b_{k-1},\cdots,-b_1|b_1',b_2',\cdots)=L(n-k-b_k,\cdots,n-k-b_1)=L(\b^c).$
 
 For $s\geq r$ we consider the general linear Lie superalgebra $\mf{gl}_{k|s}$ with parabolic subalgebra $\mf{p}_{k|s}$ and with Kac module $K^{k|s}(\lambda_{\a,n}^{\sharp})$ and irreducible module $L^{k|r}(\lambda_{\a,n}^{\sharp})$, where 
$$
\lambda^{\sharp}_{\a,n}=(-a_k^c,-a_{k-1}^c,\cdots, -a_1^c\mid a_1^c, a_2^c,\cdots,a_k^c,0^{s-k}).
$$
 We write $\mf{gl}_{k|\infty}$, $\mf{p}_{k|\infty}$, $K^{k|\infty}(\lambda_{\a,n}^{\sharp})$, and $L^{k|\infty}(\lambda_{\a,n}^{\sharp})$ for their limits, and we write $\mc{O}^{\mf{p}_{k|s}}_{\tn{int}}$, $\mc{O}^{\mf{p}_{k|\infty}}_{\tn{int}}$ for the corresponding parabolic categories $\mc{O}$. There is an exact truncation functor $\operatorname{tr}_s:\mc{O}^{\mf{p}_{k|\infty}}_{\tn{int}}\to \mc{O}^{\mf{p}_{k|s}}_{\tn{int}}$ sending $M^{k|\infty}(\lambda_{\a,n}^{\sharp})$ to $M^{k|s}(\lambda_{\a,n}^{\sharp})$ and $L^{k|\infty}(\lambda_{\a,n}^{\sharp})$ to $L^{k|s}(\lambda_{\a,n}^{\sharp})$. 

As a consequence of super duality (see \cite[Theorem 3.13]{super}), there is an equivalence of categories $\mc{O}^{\mf{p}_{k+\infty}}_{\tn{int}}\overset{\sim}\to \mc{O}^{\mf{p}_{k|\infty}}_{\tn{int}}$ identifying the pairs
$$
M^{k+\infty}(\lambda_{\a,n})\;\longleftrightarrow \;K^{k|\infty}(\lambda_{\a,n}^{\sharp}),\quad\quad L^{k+\infty}(\lambda_{\a,n})\;\longleftrightarrow \;L^{k|\infty}(\lambda_{\a,n}^{\sharp}).
$$
In particular, the composition factors of $K^{k|\infty}(\lambda_{\a,n}^{\sharp})$ are given by $L^{k|\infty}(-b_k,\cdots,-b_1|b_1,\cdots,b_k)$, where $\b=(b_1,\cdots, b_k)$ is obtained from $\a^c$ by adding an admissible Dyck pattern with no bullets. 
Given $\mathbb{D}\in Dyck(\a)$ we write $v_{\a}^{\mathbb{D}}$ for any vector in $M(\a)$ that becomes a highest weight vector for $L(\a^{\mathbb{D}})$ in a quotient of $M(\a)$, and we write $v_{\a}^{\mathbb{D},\infty}$ for any vector in $M^{k+\infty}(\lambda_{\a,n})$ that becomes a highest weight vector for $L^{k+\infty}(\lambda_{\a^{\mathbb{D}},n})$ in a quotient of $M^{k+\infty}(\lambda_{\a,n})$. Similarly, we write $\prescript{\sharp}{}{v_{\a}^{\mathbb{D},\infty}}$ for any vector in $K^{k|\infty}(\lambda_{\a,n}^{\sharp})$ that becomes a highest weight vector for $L^{k|\infty}(\lambda^{\sharp}_{\a^{\mathbb{D}},n})$ in a quotient of $K^{k|\infty}(\lambda_{\a,n}^{\sharp})$.

With this setup, we now proceed with the proof of Theorem \ref{thm:submod}.

(1) It suffices to show that $v^{\mathbb{D}'}_{\a}\in U(\mf{gl}_n)	v_{\a}^{\mathbb{D}}$ in $M(\a)$ when $\mathbb{D}\subseteq \mathbb{D}'$ and $|\mathbb{D}'|=|\mathbb{D}|+1$. Since the socle filtration on $M^{k+\infty}(\lambda_{\a,n})$ is determined by the number of paths in the Dyck pattern, it suffices to prove that $v^{\mathbb{D}',\infty}_{\a}\in U(\mf{gl}_{k+\infty})	v_{\a}^{\mathbb{D},\infty}$ in $M^{k+\infty}(\lambda_{\a,n})$. By super duality, this is equivalent to $\prescript{\sharp}{}{v^{\mathbb{D}',\infty}_{\a}}\in U(\mf{gl}_{k|\infty})	\prescript{\sharp}{}{v_{\a}^{\mathbb{D},\infty}}$ in $K^{k|\infty}(\lambda_{\a,n}^{\sharp})$, which follows from \cite[Theorem 5.18]{super}.

(2) Suppose that $\mathbb{D}=\{(x,y)\}$ and that $v^{\mathbb{D}'}_{\a}\in U(\mf{gl}_n)	v_{\a}^{\mathbb{D}}$ in $M(\a)$. By super duality we have that $\prescript{\sharp}{}{v^{\mathbb{D}',\infty}_{\a}}\in U(\mf{gl}_{k|\infty})	\prescript{\sharp}{}{v_{\a}^{\mathbb{D},\infty}}$ in $K^{k|\infty}(\lambda_{\a,n}^{\sharp})$. We induct on $|\mathbb{D}'|$. If $|\mathbb{D}'|=2$, then since $\mathbb{D}=\{(x,y)\}$ is a ``path without bridges", it follows from \cite[Theorem 5.18]{super} that $\mathbb{D}\subseteq \mathbb{D}'$. If $|\mathbb{D}'|>2$, then by inductive hypothesis there exists $\mathbb{D}''$ with $\mathbb{D}\subseteq \mathbb{D}''$, $\prescript{\sharp}{}{v^{\mathbb{D}'',\infty}_{\a}}\in U(\mf{gl}_{k|\infty})	\prescript{\sharp}{}{v_{\a}^{\mathbb{D},\infty}}$, $\prescript{\sharp}{}{v^{\mathbb{D}',\infty}_{\a}}\in U(\mf{gl}_{k|\infty})	\prescript{\sharp}{}{v_{\a}^{\mathbb{D}'',\infty}}$, and $|\mathbb{D}''|=|\mathbb{D}'|-1$. Then by \cite[Theorem 5.18]{super} we have that either $\mathbb{D}''\subseteq \mathbb{D}'$ or $\mathbb{D}'$ is obtained from $\mathbb{D}''$ by replacing a path $P\neq \{(x,y)\}$ in $\mathbb{D}''$ with two paths. In either case, we have that $\{(x,y)\} \in \mathbb{D}'$. 
\end{proof}

Let $\a,\b\in \mc{R}_{k,n}$ with $\b=\a^{(\{(x,y)\})}$, where $(x,y)$ is a corner of $\a$. By \cite{beMorphisms}, there is a unique (up to nonzero scalar) nonzero map $\phi^{\a}_{\b}:N(\a)\to N(\b)$. In particular, $\phi^{\a}_{\b}$ appears in the global Grothendieck--Cousin complex (Corollary \ref{globalGC}), and thus is strict with respect to the weight filtration $W_{\bullet}$.

\begin{corollary}\label{lem:keymorphismlemma}
Let $\a$ and $\b$ with $\b=\a^{(\{(x,y)\})}$, where $(x,y)$ is a corner of $\a$. Let $\phi^{\a}_{\b}$ be the natural map from $N(\a)$ to $N(\b)$. Let $p\in \mathbb{Z}$ and $\underline{c}$ be such that $L(\underline{c})$ is a composition factor of both $\operatorname{gr}^W_pN(\a)$ and $\operatorname{gr}^W_pN(\b)$, so that $\underline{c}=\a^{\mathbb{D}'}=\b^{\mathbb{D}}$ where $\mathbb{D}'$ is an $\a$-admissible Dyck pattern with $|\mathbb{D}'|=p+|\a|-2d_X$, and $\mathbb{D}$ is a $\b$-admissible Dyck pattern $|\mathbb{D}|=p+|\b|-2d_X$.

\begin{enumerate}
\item If $\mathbb{D}'=\mathbb{D}\cup (\{(x,y)\})$ then $(\operatorname{gr}^W_p\phi^{\a}_{\b})(L(\a^{\mathbb{D}'}))\cong L(\b^{\mathbb{D}})$.

\item If $\mathbb{D}'\neq \mathbb{D}\cup (\{(x,y)\})$ then $(\operatorname{gr}^W_p\phi^{\a}_{\b})(L(\a^{\mathbb{D}'}))=0$.
\end{enumerate}
\end{corollary}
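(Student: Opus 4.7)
The plan is to transfer the question to the dual picture via BGG duality and then invoke Theorem \ref{thm:submod}. Since $\phi^{\a}_{\b}$ occurs as a differential in the global Grothendieck--Cousin complex (Corollary \ref{globalGC}), it is strict with respect to $W_{\bullet}$. Applying BGG duality yields a nonzero morphism $\psi := (\phi^{\a}_{\b})^{\vee}\colon M(\b) \to M(\a)$, which is again strict, and the identification $\operatorname{gr}^W_p M(\a) = \operatorname{gr}^W_{-p} N(\a)$ noted after Example \ref{ex:comp} ensures that the restriction of $\operatorname{gr}^W_p \phi^{\a}_{\b}$ to a simple summand $L(\underline{c})$ is zero if and only if the restriction of $\operatorname{gr}^W_{-p}\psi$ to the corresponding $L(\underline{c})$ is zero (by Schur, each such restriction is either zero or an isomorphism, and dualizing preserves this dichotomy). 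Since $\psi$ must send the highest weight vector of $M(\b)$ to a nonzero multiple of the primitive vector $v^{\{(x,y)\}}_{\a} \in M(\a)$, I conclude that $\operatorname{im}(\psi) = U(\mf{g})\, v^{\{(x,y)\}}_{\a}$.

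The next step is to apply Theorem \ref{thm:submod} with the single-path pattern $\mathbb{D} = \{(x,y)\}$: the primitive vector $v^{\mathbb{E}}_{\a}$ lies in $\operatorname{im}(\psi)$ if and only if the length-one Dyck path $\{(x,y)\}$ occurs as a sub-pattern of $\mathbb{E}$. I then need to verify a combinatorial uniqueness statement: among patterns $\mathbb{D}' \in \mc{Z}_p(\a)$ with $\a^{\mathbb{D}'} = \underline{c}$, the condition $\{(x,y)\} \in \mathbb{D}'$ is equivalent to $\mathbb{D}' = \mathbb{D} \cup \{(x,y)\}$. Indeed, if $\{(x,y)\}$ is a path of $\mathbb{D}'$, then $\mathbb{D}' \setminus \{(x,y)\}$ is an admissible $\b$-Dyck pattern with the same number of paths as $\mathbb{D}$ and with $\b^{\mathbb{D}' \setminus \{(x,y)\}} = \underline{c}$; by the multiplicity-one part of Theorem \ref{keylemma}, this pattern must coincide with $\mathbb{D}$.

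With these ingredients, both cases follow. In case (1), $\mathbb{D}' = \mathbb{D} \cup \{(x,y)\}$ implies that $L(\a^{\mathbb{D}'})$ is a composition factor of $\operatorname{im}(\psi)$. Strictness of $\psi$ gives $\operatorname{im}(\operatorname{gr}^W_{-p}\psi) = \operatorname{gr}^W_{-p}(\operatorname{im}\psi)$, and since $L(\b^{\mathbb{D}})$ is the unique summand of $\operatorname{gr}^W_{-p} M(\b)$ that can map nontrivially onto $L(\a^{\mathbb{D}'})$ (both sides have multiplicity one), the restriction of $\operatorname{gr}^W_{-p}\psi$ to this summand is an isomorphism; dualizing recovers the isomorphism claimed in (1). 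In case (2), the combinatorial equivalence forces $\{(x,y)\} \notin \mathbb{D}'$, whence $L(\a^{\mathbb{D}'})$ is not a composition factor of $\operatorname{im}(\psi)$ and the map in question is zero.

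The main obstacle is the combinatorial equivalence of the second paragraph; it relies crucially on the multiplicity-one refinement of Theorem \ref{keylemma}, which is what rules out alternative Dyck-path decompositions of $\operatorname{supp}(\mathbb{D}) \cup \{(x,y)\}$ and ties the algebraic statement about primitive vectors (via Theorem \ref{thm:submod}) to the specific pattern-level condition appearing in the corollary.
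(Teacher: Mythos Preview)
Your proposal is correct and follows essentially the same route as the paper's proof: both dualize via BGG duality to the map $\psi\colon M(\b)\to M(\a)$, identify $\operatorname{im}(\psi)$ with $U(\mf{g})\,v^{\{(x,y)\}}_{\a}$, and then combine Theorem~\ref{thm:submod} with the multiplicity-one statement of Theorem~\ref{keylemma}. Your version is somewhat more explicit about strictness and the identification $\operatorname{im}(\operatorname{gr}^W_{-p}\psi)=\operatorname{gr}^W_{-p}(\operatorname{im}\psi)$, whereas the paper phrases case~(2) by tracking $\psi(v^{\mathbb{D}}_{\b})$ directly as a primitive vector in $M(\a)$; but these are minor differences in presentation rather than substance.
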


\begin{proof}
We prove the dual statement for the natural map $\psi_{\a}^{\b}:M(\b)\to M(\a)$.

(1) The image of $\psi^{\a}_{\b}$ is the submodule of $M(\a)$ generated by $v_{\a}^{\{(x,y)\}}$. If $\mathbb{D}'$ is $\a$-admissible, then by Theorem \ref{thm:submod}(1) we have that $\psi^{\a}_{\b}(v_{\b}^{\mathbb{E}})=v_{\a}^{\mathbb{D}'}$ for some $\mathbb{E}$ with $\underline{b}^{\mathbb{E}}=\underline{c}$. Since $L(\underline{c})$ is a composition factor of $M(\b)$ with multiplicity one, we conclude that $\mathbb{E}=\mathbb{D}$, as required.

(2) If $\mathbb{D}'\neq \mathbb{D}\cup (\{(x,y)\})$ and $\psi^{\a}_{\b}(v_{\b}^{\mathbb{D}})\neq 0$, then $\psi^{\a}_{\b}(v_{\b}^{\mathbb{D}})=v_{\a}^{\mathbb{E}'}$ where $\{(x,y)\}\in \mathbb{E}'$ by Theorem \ref{thm:submod}(2). Since $\mathbb{D}'\neq \mathbb{E}'$ and $L(\underline{c})$ has multiplicity one as a composition factor of $M(\a)$, we obtain a contradiction.
\end{proof}

\subsection{Cohomology of the Grothendieck--Cousin complex}\label{sec:maincalc} The argument in this section is based on \cite[Section 3]{amy}, though functoriality and strictness of the weight filtration simplifies things in our setting (see Section \ref{amyremark}). We fix $\a\in \mc{R}_{k,n}$.

\begin{lemma}\label{lem:addingboxes}
Let $\b\subseteq \a$, and let $\mathbb{D}\in Dyck(\b)$. We define the following set of boxes
$$
\mathcal{I}(\a,\b,\mathbb{D})=\big\{(i,j)\in \a / \b \mid \tn{$\b\cup (i,j)$ is a partition, $\mathbb{D}\cup \{(i,j)\}\in Dyck(\b \cup (i,j))$}\big\}.
$$
 For all $J\subseteq 	\mathcal{I}(\a,\b,\mathbb{D})$ we have that $\b \cup J$ is a partition and $\mathbb{D}\cup J\in Dyck(\b\cup J)$.
\end{lemma}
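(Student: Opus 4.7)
The plan is to treat the two conclusions separately, and to extract at the outset one combinatorial observation about outside corners of $\b$ that will drive everything. That observation is that the addable corners of $\b$ form a strict NE--SW staircase: if $(i,j)$ and $(i',j')$ are distinct boxes with both $\b\cup\{(i,j)\}$ and $\b\cup\{(i',j')\}$ partitions, and if $i<i'$, then $j>j'$. This drops out immediately from the description of addable positions as the pairs $(i,b_i+1)$ with $i=1$ or $b_{i-1}>b_i$, together with monotonicity of $b_\bullet$.

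First I would establish that $\b\cup J$ is a partition, by induction on $|J|$: the staircase observation ensures that adding one addable corner never destroys the addability of another, so the remaining elements of $J$ remain addable after each step, and each $(i,j)\in J\subseteq \a$ keeps the diagram inside the $k\times(n-k)$ rectangle. Next I would verify admissibility of $\mathbb{D}\cup J$ as a Dyck pattern in $\b\cup J$. Each $(i,j)\in J$ is a length-one Dyck path lying in $\a/\b$, so the paths in $\mathbb{D}\cup J$ are pairwise disjoint, and the bullet set is empty. Condition~(1) follows from
\[ (\b\cup J)^{\mathbb{D}\cup J}=(\b\cup J)\setminus(\operatorname{supp}(\mathbb{D})\cup J)=\b\setminus\operatorname{supp}(\mathbb{D})=\b^{\mathbb{D}}, \]
which is a partition by admissibility of $\mathbb{D}$ in $\b$; condition~(3) is vacuous.

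The heart of the argument is the covering condition~(2), which I would verify by splitting distinct pairs $(D_p,D_q)$ in $\mathbb{D}\cup J$ into four cases: both in $\mathbb{D}$; $D_p\in\mathbb{D}$ with $D_q=\{(i,j)\}\in J$; $D_p=\{(i,j)\}\in J$ with $D_q\in\mathbb{D}$; and both singletons from $J$. The first case is the admissibility of $\mathbb{D}$ in $\b$. The third case follows from the hypothesis $\mathbb{D}\cup\{(i,j)\}\in Dyck(\b\cup(i,j))$, since the three N/NW/W neighbors of $(i,j)$ all lie in $\b$ and their path-membership is unaffected by the other additions from $J$. The remaining two cases I expect to be the main technical point, but both should collapse to vacuity. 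For the second case, using $j=b_i+1$ together with the partition inequalities $b_{i+1}\leq b_i$ and (when applicable) $b_{i-1}\geq b_i+1$ shows that the addable corner $(i,j)$ cannot lie directly N, NW, or W of any box of $\b$, so in particular not of any box of $D_p$. For the last case, the staircase observation applied to the distinct addable corners $(i,j)$ and $(i',j')$ gives that neither lies N/NW/W of the other; the subtlety to double-check is that with a singleton $D_p=\{(i,j)\}$ the covering condition, if triggered, would actually force all three N/NW/W neighbors of $(i,j)$ to coincide with $(i',j')$, which is impossible---so confirming that the hypothesis itself never fires is what secures the vacuity, and the proof is complete.
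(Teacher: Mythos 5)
Your proposal is correct and takes essentially the same approach as the paper: the paper's (much terser) proof likewise observes that the addable corners in $J$ cannot lie directly North, South, East, or West of one another, deduces that $\b\cup J$ is a partition, and then concludes admissibility of $\mathbb{D}\cup J$ from the admissibility of each individual $\mathbb{D}\cup\{(i,j)\}$. Your staircase observation and four-case check of the covering condition simply make explicit what the paper leaves implicit.
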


\begin{proof}
Let $J\subseteq \mathcal{I}(\a,\b,\mathbb{D})$. If $\b \cup (i,j)$ is a partition for all $(i,j)\in J$, then no box in $J$ lies directly North, South, East, nor West of another. Thus, $b\cup J$ is a partition as well. Since no box in $J$ lies directly North, South, East, nor West of another, and $\mathbb{D}\cup\{(i,j)\}$ is an admissible Dyck pattern for all $(i,j)\in J$, it follows that $\mathbb{D}\cup J$ is an admissible Dyck pattern.
\end{proof}

Let $GC^{\bullet}_{\a}$ denote the global Grothendieck--Cousin complex for $\a$, so that for $j,p\in \mathbb{Z}$ we have 
\begin{equation}\label{decompsemisimple}
GC_{\a}^j=\bigoplus_{\b\subseteq \a,\;|\b|=d_X-j}N(\b),\quad\quad \operatorname{gr}^W_p(GC^j_{\a})=\bigoplus_{\b\subseteq \a,\;|\b|=d_X-j} \bigoplus_{\mathbb{D}\in \mathcal{Z}_p(\b)} L(\b^{\mathbb{D}}).
\end{equation}
Using Corollary \ref{lem:keymorphismlemma} we break $\operatorname{gr}^W_p GC^{\bullet}_{\a}$ into a direct sum of Koszul subcomplexes.

\begin{theorem}\label{Koszulsums}
Given $p,q\in \mathbb{Z}$ we define
$$
\mathcal{Y}_p(\a;q)=\left\{(\b,\mathbb{D})\middle\vert \begin{array}{l}
    \tn{$b\subseteq \a$ and  $q=d_{X}-|\b|$, }\\
    \tn{$\mathbb{D}\in Dyck(\b)$,} \\
    \tn{$|D|\geq 3$ for all $D\in \mathbb{D}$ and $|\mathbb{D}|=p-q-d_{X}$ }
    \end{array} \right\}.
$$
The following is true about the complex $\operatorname{gr}^W_pGC^{\bullet}_{\a}$.
\begin{enumerate}
\item We have that $\operatorname{gr}^W_pGC^{\bullet}_{\a}$ is a direct sum of subcomplexes
$$
\operatorname{gr}^W_p\big(GC^{\bullet}_{\a}\big)=\bigoplus_{0\leq q\leq |\a|}\bigoplus_{(\b,\mathbb{D})\in \mathcal{Y}_p(\a;q)} \mathcal{K}(\a,\b,\mathbb{D}),
$$
where $\mathcal{K}(\a,\b,\mathbb{D})$ is a complex concentrated in degrees $q-|\mathcal{I}(\a,\b,\mathbb{D})|,\cdots , q$, with
$$
\mathcal{K}(\a,\b,\mathbb{D})_j=\bigoplus_{J\subseteq \mathcal{I}(\a,\b,\mathbb{D}),\; |J|=q-j} L((\b\cup J)^{\mathbb{D}\cup J}),\quad q-|\mathcal{I}(\a,\b,\mathbb{D})|\leq j \leq q,
$$

\item For $0\leq q\leq d_{X}$ we have
$$
\mathcal{H}^q\big(\operatorname{gr}^W_p\big(GC^{\bullet}_{\a}\big)\big) =\bigoplus_{\substack{(\b,\mathbb{D})\in \mathcal{Y}_p(\a;q)\\ \mathcal{I}(\a,\b,\mathbb{D})=\emptyset}} L(\b^{\mathbb{D}}).
$$	
\end{enumerate}

\end{theorem}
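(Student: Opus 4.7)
The plan is to decompose $\operatorname{gr}^W_p GC^\bullet_\a$ as a direct sum of ``Koszul-type'' subcomplexes, each supported on a single simple module, and to show that every such subcomplex is either acyclic or concentrated in a single cohomological degree. The starting observation is that each term $L(\b^\mathbb{D})$ appearing in $\operatorname{gr}^W_p GC^\bullet_\a$ can be tagged by a pair consisting of a ``base'' $(\b_0,\mathbb{D}_0)\in \mathcal{Y}_p(\a;q)$ (with $q = d_X-|\b_0|$) together with a subset $J\subseteq \mathcal{I}(\a,\b_0,\mathbb{D}_0)$: namely, $J$ is the set of length-one paths of $\mathbb{D}$, $\mathbb{D}_0 = \mathbb{D}\setminus J$ collects the paths of length at least three, and $\b_0 = \b\setminus J$. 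Lemma \ref{lem:addingboxes} supplies the map in the other direction, and together they give the desired bijection. Under this indexing, the summand $L((\b_0\cup J)^{\mathbb{D}_0\cup J})=L(\b_0^{\mathbb{D}_0})$ sits in cohomological degree $j = q-|J|$, which matches the degree prescribed in the statement.

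With the indexing in place, I would next show that the differential of $\operatorname{gr}^W_p GC^\bullet_\a$ preserves each subcomplex $\mathcal{K}(\a,\b_0,\mathbb{D}_0)$. By construction, the differential is a direct sum over codimension-one pairs $\b_2 = \b_1\setminus\{(x,y)\}$ of the associated graded maps $\operatorname{gr}^W_p\phi^{\b_1}_{\b_2}$. Corollary \ref{lem:keymorphismlemma} pins down exactly when one of these induces a nonzero morphism on a given simple summand: the relevant Dyck patterns must differ by a length-one path at $(x,y)$. Translated through the bijection of the previous paragraph, this is precisely the condition that $(\b_0,\mathbb{D}_0)$ is preserved and that $J$ loses the element $(x,y)$. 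This proves that the differential restricts to each $\mathcal{K}(\a,\b_0,\mathbb{D}_0)$, giving the decomposition in~(1).

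For part~(2), I would compute the cohomology of each $\mathcal{K}(\a,\b_0,\mathbb{D}_0)$ separately. Since the simple module $L(\b_0^{\mathbb{D}_0})$ occurs with multiplicity one in each term of the subcomplex, Schur's lemma lets me write $\mathcal{K}(\a,\b_0,\mathbb{D}_0)\cong L(\b_0^{\mathbb{D}_0})\otimes_\C K^\bullet$, where $K^\bullet$ is a complex of $\C$-vector spaces whose basis at degree $j$ is indexed by subsets $J\subseteq \mathcal{I}(\a,\b_0,\mathbb{D}_0)$ with $|J| = q-j$, and whose differential is a signed sum of the face maps $[J]\mapsto [J\setminus\{(x,y)\}]$. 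Each scalar coefficient is nonzero by Corollary \ref{lem:keymorphismlemma}(1), and the identity $d^2=0$ inherited from $GC^\bullet_\a$ forces the usual cocycle relations among them, so a standard rescaling identifies $K^\bullet$ with the augmented simplicial cochain complex of the simplex on vertex set $\mathcal{I}(\a,\b_0,\mathbb{D}_0)$. That simplex is contractible, so $K^\bullet$ is acyclic whenever $\mathcal{I}(\a,\b_0,\mathbb{D}_0)\neq\emptyset$; otherwise it is the single copy of $\C$ at degree $q$. Summing over bases then yields (2).

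The main obstacle is the combinatorial bookkeeping behind the bijection: given admissible $(\b',\mathbb{D}')$, one must check that stripping out the length-one paths $L\subseteq \mathbb{D}'$ leaves a partition $\b'\setminus L$, with $\mathbb{D}'\setminus L$ admissible in it, and moreover that $L\subseteq \mathcal{I}(\a,\b'\setminus L,\mathbb{D}'\setminus L)$. This uses admissibility condition (1) (forcing $\b'^{\mathbb{D}'}$ to be a partition) together with the covering condition (2) applied to the interaction of the length-one paths with the paths of length at least three. Once this combinatorial check is in place, Paragraph~2 is immediate from the dichotomy of Corollary \ref{lem:keymorphismlemma}, and Paragraph~3 is the standard acyclicity of the augmented simplicial cochain complex of a nonempty simplex, which can alternatively be verified by writing down an explicit contracting homotopy using any fixed basepoint of $\mathcal{I}(\a,\b_0,\mathbb{D}_0)$.
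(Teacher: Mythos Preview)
Your proposal is correct and follows essentially the same approach as the paper: strip the length-one paths from each $(\underline{x},\mathbb{X})$ to obtain a base $(\b,\mathbb{D})\in\mathcal{Y}_p(\a;q)$, use Corollary~\ref{lem:keymorphismlemma} to see that the differential only connects summands with the same base, and then observe that each $\mathcal{K}(\a,\b,\mathbb{D})$ is a Koszul complex on the set $\mathcal{I}(\a,\b,\mathbb{D})$. The paper dispatches part~(2) by citing a Koszul-complex lemma, whereas you spell out the identification with the augmented simplicial cochain complex of a simplex; both amount to the same computation.
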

We note that all summands of $\mathcal{K}(\a,\b,\mathbb{D})$ are isomorphic to $L(\b^{\mathbb{D}})$. 

\begin{proof}
Let $0\leq j\leq d_{X}$ and let $N(\underline{x})$ be a summand of $GC^j_{\a}$. Let $\mathbb{X}\in Dyck(\underline{x})$ with $|\mathbb{X}|=p-j-d_{X}$, so that $L(\underline{x}^{\mathbb{X}})$ is a composition factor of $\operatorname{gr}^W_pN(\underline{x})$. Let $J\subseteq \underline{x}$ be the set of boxes in $\underline{x}$ corresponding to paths of length one in $\mathbb{X}$. Set $\underline{b}=\underline{x}\setminus J$ and $\mathbb{D}=\mathbb{X}\setminus J$. Then $(\underline{b},\mathbb{D})\in \mathcal{Y}_p(\a;j+|J|)$. Thus, $L(\underline{x}^{\mathbb{X}})$ is a summand of $\mathcal{K}(\a,\b,\mathbb{D})_j$, and there is no other pair $(\underline{b}',\mathbb{D}')\in\mathcal{Y}_p(\a;j+|J|)$ for which $L(\underline{x}^{\mathbb{X}})$ is a summand of $\mathcal{K}(\a,\b',\mathbb{D}')_j$.

By Corollary \ref{globalGC}, the morphism $\partial^j:GC^j_{\a}\to GC^{j+1}_{\a}$ induces a nonzero morphism from $N(\underline{x})$ to $N(\underline{x}\setminus k)$ for all $k\in J$ (and such a morphism is unique up to a nonzero scalar \cite{beMorphisms}). By Corollary \ref{lem:keymorphismlemma} it follows that $\partial^j$ maps $L(\underline{x}^{\mathbb{X}})$ diagonally to
$$
\bigoplus_{k\in J} L((\underline{x}\setminus k)^{\mathbb{X}\setminus k}).
$$
This proves (1). To see (2), note that $\mathcal{K}(\a,\b,\mathbb{D})$ is a Koszul complex (see \cite[Lemma 3.6]{amy}) and hence is exact unless it has only one term, which is equivalent to $\mathcal{I}(\a,\b,\mathbb{D})=\emptyset$.
\end{proof}

\begin{example}
Let $X=\operatorname{Gr}(2,4)$ and $\a=(2,1)$. Then $GC^{\bullet}_{\a}$ is the complex
$$
GC^{\bullet}_{\a}:\quad \quad 0\longrightarrow 0\longrightarrow \textnormal{$\large{N}\left(\tiny{\yng(2,1)}\right)$}\longrightarrow \textnormal{$\large{N}\left(\tiny{\yng(2)}\right)\oplus \large{N}\left(\tiny{\yng(1,1)}\right)$}\longrightarrow \textnormal{$\large{N}\left(\tiny{\yng(1)}\right)$}\longrightarrow \textnormal{$\large{N}\left(0\right)$}\longrightarrow 0.
$$
In the following table, the column indices are the cohomological degree and the row indices indicate the graded pieces of the weight filtration:\\
\begin{center}
\scalebox{1.0}{
{\tabulinesep=1.7mm
\begin{tabu}{|c |  c | c | c | c |} 
 \hline

  &  $1$ & $2$ & $3$ & $4$ \\ 
  \hline

 $5$ &  $\large{L}\left(\tiny{\yng(2,1)}\right)$ &  &  &  \\  
 \hline
 
  $6$ &  $\quad\large{L}\left(\tiny{\yng(2)}\right)\oplus \large{L}\left(\tiny{\yng(1,1)}\right)\oplus \large{L}\left(0\right)\quad$ & $\large{L}\left(\tiny{\yng(2)}\right)\oplus \large{L}\left(\tiny{\yng(1,1)}\right)$ &  &  \\  
 \hline
 
  $7$ &  $\large{L}\left(\tiny{\yng(1)}\right)$ & $\quad\large{L}\left(\tiny{\yng(1)}\right)\oplus \large{L}\left(\tiny{\yng(1)}\right)\quad$ & $\large{L}\left(\tiny{\yng(1)}\right)$ &  \\  
 \hline
 
  $8$ &   &  & $\quad L\left(0\right)\quad$ & $\quad L\left(0\right)\quad$ \\  
 \hline
\end{tabu}}}
\end{center}

\medskip

\noindent To create the sets $\mathcal{Y}_p(\a;q)$ we select all composition factors of $N(\b)$ in cohomological degree $q$ that were obtained by removing Dyck patterns for which all paths have length $\geq 3$ (including the empty pattern):
$$
\mathcal{Y}_5(\a;1)=\left\{\left(\tiny{\yng(2,1)},\emptyset\right) \right\},\quad \mathcal{Y}_6(\a;1)=\left\{\left(\tiny{\yng(2,1)},\tiny{\yng(2,1)}\right) \right\},\quad \mathcal{Y}_6(\a;2)=\left\{\left(\tiny{\yng(2)},\emptyset\right),\left(\tiny{\yng(1,1)},\emptyset\right) \right\},
$$
$$
\mathcal{Y}_7(\a;3)=\left\{\left(\tiny{\yng(1)},\emptyset\right) \right\},\quad \mathcal{Y}_8(\a;4)=\left\{\left((0),\emptyset\right) \right\}.
$$
Therefore, we obtain direct sum decompositions:
$$
\operatorname{gr}_5^W(GC^{\bullet}_{\a})=\mc{K}\left(\a, \tiny{\yng(2,1)},\emptyset\right),
$$
$$
\operatorname{gr}_6^W(GC^{\bullet}_{\a})=\mc{K}\left(\a, \tiny{\yng(2,1)},\tiny{\yng(2,1)}\right)\oplus \mc{K}\left(\a, \tiny{\yng(2)},\emptyset\right)\oplus \mc{K}\left(\a, \tiny{\yng(1,1)},\emptyset\right),
$$
$$
\operatorname{gr}_7^W(GC^{\bullet}_{\a})=\mc{K}\left(\a, \tiny{\yng(1)},\emptyset\right),\quad \quad 
\operatorname{gr}_8^W(GC^{\bullet}_{\a})=\mc{K}\left(\a, (0),\emptyset\right).
$$
For each pair $(\b,\mathbb{D})$ as above, we have $\mathcal{I}(\a,\b,\mathbb{D})=\emptyset$ if and only if $(\b,\mathbb{D})$ is one of the following:
$$
\left(\tiny{\yng(2,1)},\emptyset\right)\quad \tn{and} \quad \left(\tiny{\yng(2,1)},\tiny{\yng(2,1)}\right).
$$
These correspond to the Koszul complexes with one term.
In particular, we obtain a non-split exact sequence
$$
0\longrightarrow \textnormal{$\large{L}\left(\tiny{\yng(2,1)}\right)$}\longrightarrow H^1_{Z_{\a}}(X,\mc{O}_X)\longrightarrow \textnormal{$\large{L}\left(0\right)$}\longrightarrow 0,
$$
and $H^q_{Z_{\a}}(X,\mc{O}_X)=0$ for $q\neq 1$. We note that $Z_{(0)}$ is the singular locus of $Z_{\a}$.
\hfill\mbox{$\diamond$}		
\end{example}

We provide a combinatorial bijection relating Theorem \ref{Koszulsums} to Theorem \ref{mainthm}.
\begin{lemma}\label{finallemma}
There is a bijection between the sets
$$
\{(\b,\mathbb{D})\in \mathcal{Y}_p(\a;q)\mid \mathcal{I}(\a,\b,\mathbb{D})=\emptyset\}\quad \leftrightarrow \quad \mathcal{A}_p(\aa;q),
$$	
given by sending $(\b,\mathbb{D}=(D_1,\cdots,D_{p-q-d_{X}}))$ to
$$
\mathbb{D}'=(D_1,\cdots,D_{p-q-d_{X}};\mathbb{B}),
$$
where $\mathbb{B}$ is the set of boxes in $\a /\b$.
\end{lemma}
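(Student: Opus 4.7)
The strategy is to construct the inverse map $\psi:(\mathbb{D};\mathbb{B})\mapsto(\a\setminus\mathbb{B},\mathbb{D})$ explicitly and check that both $\phi$ and $\psi$ are well-defined; the compositions $\phi\circ\psi$ and $\psi\circ\phi$ are then identity by construction. Cardinality bookkeeping is immediate: $|\b|=d_X-q$ forces $|\a/\b|=q+|\a|-d_X$, matching the bullet count in $\mathcal{A}(\a;q)$, and $|\mathbb{D}|=p-q-d_X$ carries over unchanged. So the content of the lemma reduces to (i) verifying that $\phi$ lands in $\mathcal{A}_p(\a;q)$, and (ii) verifying that $\psi$ lands in the stated subset of $\mathcal{Y}_p(\a;q)$.

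For (i), the identity $\a^{\mathbb{D}'}=\a\setminus(\operatorname{supp}(\mathbb{D})\cup(\a/\b))=\b^{\mathbb{D}}$ makes the partition condition on the complement automatic, and the length-$\geq 3$ condition on paths transfers verbatim. The substantive step is producing an augmented-Dyck-path decomposition $\a/\b=B_1\cup\cdots\cup B_r$ so that each $(D_i,B_i)$ is an augmented Dyck path. I plan to construct $B_i$ greedily: for each path $D_i$, take the maximal horizontal strip of $\a/\b$ extending East from the endpoint of $D_i$ together with the maximal vertical strip extending South from its start. The claim that these strips exhaust $\a/\b$ is the heart of the matter and uses $\mathcal{I}(\a,\b,\mathbb{D})=\emptyset$ crucially: any outside corner $(x,y)$ of $\b$ lying in $\a/\b$ that is not accounted for by such a strip would be isolated from every existing path in the covering-condition sense, so $\mathbb{D}\cup\{(x,y)\}$ would be admissible in $\b\cup\{(x,y)\}$, placing $(x,y)\in\mathcal{I}$, a contradiction. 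Peeling outside corners iteratively sweeps up all of $\a/\b$. With the bullet decomposition in place, the remaining admissibility conditions for $(\mathbb{D};\a/\b)$ in $\a$ -- the covering condition and the no-bullet-adjacency condition -- follow from the corresponding conditions for $\mathbb{D}$ in $\b$ combined with the structural fact that every element of $\a/\b$ is a South or East extension of a path endpoint.

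For (ii), the key point is that $\b=\a\setminus\mathbb{B}$ is a partition: since $\mathbb{B}$ is by definition a union of horizontal East-strips and vertical South-strips hanging off Dyck-path endpoints, removing these from the partition $\a$ leaves the partition $\b=\a^{\mathbb{D}'}\cup\operatorname{supp}(\mathbb{D})$. Admissibility of $\mathbb{D}$ in $\b$ is then immediate since $\b^{\mathbb{D}}=\a^{\mathbb{D}'}$ is a partition and the covering condition in $\b$ restricts from the one in $\a$. The condition $\mathcal{I}(\a,\b,\mathbb{D})=\emptyset$ is verified directly: each $(x,y)\in\mathbb{B}$ sits immediately East of a path endpoint or South of a path start, and checking cases shows this forces either $\b\cup\{(x,y)\}$ to fail the partition property or the singleton $\{(x,y)\}$ to violate the covering condition with the adjacent longer path in $\b\cup\{(x,y)\}$. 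The main obstacle throughout is the forward-direction claim that $\mathcal{I}=\emptyset$ suffices to force every box of $\a/\b$ to appear as a bullet in some augmented-Dyck-path extension of $\mathbb{D}$; this is a delicate combinatorial statement about outside corners of $\b$ failing to be promotable to admissible singleton paths, and requires careful case analysis based on the position of each corner relative to the endpoints and starts of nearby Dyck paths.
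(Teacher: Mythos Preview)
Your overall architecture matches the paper's proof exactly: define the forward map $\phi$, check well-definedness, define the inverse $\psi:(\mathbb{D};\mathbb{B})\mapsto(\a\setminus\mathbb{B},\mathbb{D})$, and verify it lands in the correct subset. The cardinality bookkeeping and the identifications $\a^{\mathbb{D}'}=\b^{\mathbb{D}}$ are handled identically in both.

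There is one point on which you are more careful than the paper. The paper's well-definedness argument verifies the three admissibility conditions (partition complement, covering condition, no bullet N/NW/W of a path box) but never explicitly checks that $\mathbb{B}=\a/\b$ admits a decomposition $B_1\cup\cdots\cup B_r$ making each $(D_i,B_i)$ an augmented Dyck path, which is part of the definition of an augmented Dyck pattern. You correctly flag this as the substantive step and observe that $\mathcal{I}(\a,\b,\mathbb{D})=\emptyset$ is exactly what forces every outside corner of $\b$ lying in $\a/\b$ to be immediately adjacent (East or South) to a path endpoint; peeling such corners iteratively then produces the required head/tail strips. This is the right argument, and your sketch of it is sound, though as you note it requires some case analysis to execute fully. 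Your verification in direction (ii) that $\b=\a\setminus\mathbb{B}$ is a partition (via $\b=\a^{\mathbb{D}'}\cup\operatorname{supp}(\mathbb{D})$ and the no-bullet-N/NW/W condition) is also a detail the paper leaves implicit.

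One small imprecision: in (ii) you write that each $(x,y)\in\mathbb{B}$ sits immediately East of an endpoint or South of a start. Strictly this holds only for those $(x,y)$ with $\b\cup\{(x,y)\}$ a partition, but since those are the only boxes relevant to $\mathcal{I}$, the argument goes through.
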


\begin{proof}
We note that the number of paths and bullets in $\mathbb{D}'$ is correct. To show that the map is well-defined, we need to verify that $\mathbb{D}'\in Dyck^{\bullet}(\a)$. First, $\a^{\mathbb{D}'}=\b^{\mathbb{D}}$ is a partition, as $\mathbb{D}$ is $\b$-admissible. Since $\mathbb{D}$ is a Dyck pattern, we have that $(D_1,\cdots,D_{p-q-d_{X}})$ satisfies the covering condition. Since the support of $\mathbb{D}$ is contained in $\b$, and the support of $\mathbb{B}$ is $\a/\b$, it follows that no box of $\mathbb{B}$ is located directly North, Northwest, nor West from a box in $\mathbb{D}$. Therefore, the map is well-defined.

The map is clearly injective, as $\mathbb{B}$ completely determines $\b$, and $(D_1,\cdots,D_{p-q-d_{X}})$ completely determines $\mathbb{D}$. To show surjectivity, let $\mathbb{D}'=(D_1,\cdots,D_{p-q-d_{X}};\mathbb{B})\in \mathcal{A}_p(\aa;q)$, $\b=\a\setminus \mathbb{B}$, and $\mathbb{D}=(D_1,\cdots,D_{p-q-d_{X}})$. We need to show that $\mathcal{I}(\a,\b,\mathbb{D})=\emptyset$. All boxes of $\a/\b$ belong to the heads or tails of paths in $\mathbb{D}$. Thus, for every box $(i,j)$ in $\mathbb{B}$ such that $\b\cup (i,j)$ is a partition, there exists a path $D\in \mathbb{D}$ such that $(i,j)$ belongs directly to the head or tail of $D$. In particular, the covering condition fails for $D$ and $(i,j)$. Thus, $(i,j)$ does not belong to $\mathcal{I}(\a,\b,\mathbb{D})$, and thus $\mathcal{I}(\a,\b,\mathbb{D})=\emptyset$.
\end{proof}

Combining Corollary \ref{globalGC}, Theorem \ref{Koszulsums}(2), Lemma \ref{finallemma}, we obtain the descriptions of $[\mathscr{H}^q_{Z_{\underline{a}}}(\mc{O}_X)]$ and $[H^q_{Z_{\underline{a}}}(X,\mc{O}_X)]$ in Theorem \ref{mainthm}. Via Theorem \ref{GCaComplex}, the following result implies the assertion in Theorem \ref{mainthm} about the graded pieces of the weight filtration.

\begin{corollary}
For all $\underline{a}\in \mathcal{R}_{k,n}$, $0\leq q\leq d_X$, and $p\in \mathbb{Z}$ we have that 
$$
\mathcal{H}^q(\operatorname{gr}^W_p\mathcal{GC}^{\bullet}_{\a})\cong \bigoplus_{\mathbb{D}\in \mathcal{A}_p(\aa;q)} IC^H_{Z_{\a^{\mathbb{D}}}}\left(\frac{\left(\left|\a^{\mathbb{D}}\right|-p\right)}{2}\right).
$$
\end{corollary}

\begin{proof}
By multiplicity-freeness of the modules $N(\underline{b})$ (Theorem \ref{keylemma}), we have that, for all $\b\in \mathcal{R}_{k,n}$ and all $p\in \mathbb{Z}$, the Hodge modules $\operatorname{gr}^W_p( \mathscr{H}^{c(\b)}_{O_{\b}}(\mathcal{O}_X^H))$ are direct sums of intersection cohomology Hodge modules of the form $IC^H_{Z_{\b^{\mathbb{D}}}}(k)$, where $k=(|\underline{b}^{\mathbb{D}}|-p)/2$, so that the summands have weight $p$. It follows that the direct sum decompositions in (\ref{decompsemisimple}) and the conclusions of Theorem \ref{Koszulsums} may be lifted to the category of mixed Hodge modules. In particular, the complex $\operatorname{gr}^W_p\mathcal{GC}^{\bullet}_{\a}$ of Hodge modules is a direct sum of Koszul subcomplexes of intersection cohomology Hodge modules of the form $IC^H_{Z_{\b^{\mathbb{D}}}}(k)$, and $\mathcal{H}^q(\operatorname{gr}^W_p\mathcal{GC}^{\bullet}_{\a})$ is a direct sum over $\mathbb{D}\in \mathcal{A}_p(\aa;q)$ of modules $IC^H_{Z_{\a^{\mathbb{D}}}}(k)$, where $k=(|\a^{\mathbb{D}}|-p)/2$, so that the summands have weight $p$.
\end{proof}

\begin{remark}
One could attempt to carry out a similar argument for the remaining Hermitian pairs:	
$$
(\mathsf{B}_n,\mathsf{B}_{n-1}),\quad (\mathsf{C}_n,\mathsf{A}_{n-1}),\quad (\mathsf{D}_n,\mathsf{A}_{n-1}),\quad (\mathsf{D}_n,\mathsf{D}_{n-1}),\quad (\mathsf{E}_6,\mathsf{D}_5),\quad (\mathsf{E}_7,\mathsf{E}_6).
$$
The corresponding parabolic Verma modules are again rigid and multiplicity-free \cite{cisWeight}, and the composition factors are described by parabolic Kazhdan--Lusztig polynomials \cite{CC} (see also \cite{Boe, brenti}). Unfortunately, the submodule lattices are not known in full generality, so that we do not currently have analogues of Theorem \ref{thm:submod} for all Hermitian pairs. One may use our techniques in any setting where the relevant parabolic Verma modules have Loewy length at most three (because then the submodule lattice is determined by the Loewy filtration), which covers the cases $(\mathsf{B}_n,\mathsf{B}_{n-1})$ and $(\mathsf{D}_n,\mathsf{D}_{n-1})$. In the cases $(\mathsf{E}_6,\mathsf{D}_5)$, $(\mathsf{E}_7,\mathsf{E}_6)$, many of the parabolic Verma modules have Loewy length at most three (see \cite[Tables 7.1, 7.2]{cisWeight}), so our techniques are applicable to some of the Schubert varieties. In the major cases $(\mathsf{C}_n,\mathsf{A}_{n-1})$, $(\mathsf{D}_n,\mathsf{A}_{n-1})$, the weight filtrations are dictated by shifted Dyck patterns \cite{brenti}. We expect a combinatorial formula for local cohomology in these cases similar to ours, via ``augmented shifted Dyck patterns". Formulating and proving such a result would allow one to calculate the weight filtration on local cohomology with support in determinantal varieties of symmetric matrices and Pfaffian varieties of skew-symmetric matrices as in Section \ref{sec:det}, upon restricting to the opposite big cell. The composition factors are known \cite{raicu2016local}, but the Hodge and weight filtrations are not.  
\end{remark}

\subsection{Equivalence to syzygies of determinantal thickenings}\label{amyremark}

A determinantal thickening is a $\GL_m(\C)\times \GL_n(\C)$ invariant ideal in the polynomial ring $S=\operatorname{Sym}(\C^m\otimes \C^n)\cong \C[x_{i,j}]_{1\leq i\leq m,1\leq j\leq n}$ on the space of $m\times n$ matrices, with $m\geq n$ (see \cite{DEP}). Given a partition $\lambda=(\lambda_1\geq \cdots \geq \lambda_n\geq 0)$, there is a basic thickening $I_{\lambda}\subseteq S$ generated by the subrepresentation $\bS_{\lambda}\C^m\otimes \bS_{\lambda}\C^n\subseteq S$. Via the BGG correspondence, the linear strands of the free resolutions of all determinantal thickenings admit actions of the general linear Lie superalgebra $\mathfrak{gl}(m|n)$ (see \cite{thick} and references therein). In \cite{thick}, Raicu--Weyman make a conjecture on the structure of the linear strands of the basic thickenings $I_{\lambda}$, in terms of admissible augmented Dyck patterns \cite[Conjecture 4.1]{thick}, and prove it for the first strand \cite[Theorem 5.1]{thick}. Huang proves their full conjecture in \cite[Theorem 3.7]{amy}.

Via super duality (see \cite[Section 3.3]{super} or the proof of Theorem \ref{thm:submod} above), our Theorem \ref{mainthm} is equivalent to \cite[Theorem 3.7]{amy}. Indeed, over infinite-dimensional Lie (super)algebras, our complex $GC^{\bullet}_{\a}$ is identified with dual of the complex $\tilde{\mathbf{R}}(I_{\a^c})$ in \cite{amy}. The cohomology of $GC^{\bullet}_{\a}$ calculates local cohomology for us, while the cohomology of $\tilde{\mathbf{R}}(I_{\a^c})$ calculates the linear strands of the free resolution of $I_{\a^c}$. This explains the appearance of admissible augmented Dyck patterns in both works. Our argument above is closely modeled after Huang's. There are two advantages to our treatment, however. Firstly, from our point of view, it is clear that the weight filtration is functorial and that 	$GC^{\bullet}_{\a}$ is a filtered complex, whereas it is not immediate that the Loewy filtration in \cite{amy} is compatible with the morphisms in $\tilde{\mathbf{R}}(I_{\a^c})$. Secondly, strictness of the weight filtration immediately implies that the spectral sequence for $(GC^{\bullet}_{\a}, W_{\bullet})$ degenerates, whereas it requires justification for $\tilde{\mathbf{R}}(I_{\a^c})$.

It would be interesting to use duality results for other Lie (super)algebras (see \cite{dualities}) to relate local cohomology on other flag varieties to linear strands of free resolutions of interesting classes of ideals. A natural case to investigate next would be the Hermitian pairs $(\mathsf{C}_n,\mathsf{A}_{n-1})$, $(\mathsf{D}_n,\mathsf{A}_{n-1})$.

\subsection{Application to local cohomology supported in determinantal varieties}\label{sec:det}
Let $\mathscr{X}=\C^{m\times n}$ be the space of $m\times n$ matrices over the complex numbers with $m\geq n$, and for $0\leq p \leq n$ let $\mathscr{Z}_p\subseteq \X$ denote the determinantal variety of matrices of rank $\leq p$. In recent past, a great deal of information has been learned about the local cohomology modules $H^j_{\Z_p}(\X,\mc{O}_{\X})$, see \cite{witt, rw, characters, raicu2016local, categories, iterated, perlmanraicu, MHM}. 

In this section, we use Theorem \ref{mainthm} to recover the composition factors and weight filtration on $H^j_{\Z_p}(\X,\mc{O}_{\X})$, obtained in full generality in \cite{raicu2016local} and \cite{MHM}. We set $D_p=\mathcal{L}(\Z_p,\X)$ for the intersection cohomology $\D$-module associated to the trivial local system on $\Z_p\setminus \Z_{p-1}$. 
\begin{theorem}\label{thm:detthm}
The following is true about the local cohomology modules $H^j_{\Z_p}(\X,\mc{O}_{\X})$:
\begin{enumerate}
\item \cite[Main Theorem]{raicu2016local} We have the following in the Grothendieck group of $\D$-modules on $\X$:
$$
\sum_{j\geq 0} \big[H^j_{\Z_p}(\X,\mc{O}_{\X})\big]\cdot q^j=\sum_{s=0}^p [D_s]\cdot q^{(n-p)^2+(n-s)\cdot (m-n)}\cdot \binom{n-s-1}{p-s}_{q^2}.
$$

\item \cite[Theorem 1.1]{MHM} Each copy of $D_s$ in $H^j_{\Z_p}(\X,\mc{O}_{\X})$ underlies a pure Hodge module of weight $mn+p-s+j$.
\end{enumerate}
\end{theorem}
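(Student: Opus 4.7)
The plan is to realize $(\X,\Z_p)$ as the restriction of a Schubert pair on the Grassmannian $X=\operatorname{Gr}(n,m+n)$ to its opposite big cell, and then to apply Theorem~\ref{mainthm} together with a combinatorial enumeration.

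I would take $X=\operatorname{Gr}(n,m+n)$ (so $k=n$ and $N-k=m$ in the paper's notation), with standard flag $V_\bullet$, and choose a complementary $m$-plane $V^{op}\subseteq V=\C^{m+n}$ with $V=V_n\oplus V^{op}$. The opposite big cell
$$
U=\{W\in X : W\cap V^{op}=0\}
$$
is an affine open chart on which $W$ is parameterized as the graph of an $m\times n$ matrix, giving an identification $U\cong\X=\C^{m\times n}$. A direct Schubert-cell calculation (using $\rk(A)\leq p \Leftrightarrow \dim(V_n\cap W)\geq n-p$) then identifies $\Z_p=Z_{\a_p}\cap U$, where
$$
\a_p=(\underbrace{m,\ldots,m}_p,\underbrace{p,\ldots,p}_{n-p})\in\mc{R}_{n,m+n}
$$
is a hook partition with $|\a_p|=p(m+n-p)$ and $\operatorname{codim}(Z_{\a_p},X)=(m-p)(n-p)$. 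Writing $\iota\colon U\hookrightarrow X$ for the open immersion, base change gives
$$
\iota^*\lc{Z_{\a_p}}^j(\mc{O}_X^H)\;\cong\;\lc{\Z_p}^j(\mc{O}_\X^H)
$$
as mixed Hodge modules, compatibly with composition factors and the weight filtration.

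The next step is to apply Theorem~\ref{mainthm}. The first key combinatorial observation to establish is that, because $\a_p$ is itself a hook, \emph{every} admissible augmented Dyck pattern $\mathbb{D}\in\mc{A}(\a_p;j)$ satisfies $\a_p^{\mathbb{D}}=\b_s:=(m^s,s^{n-s})$ for some $0\leq s\leq p$: removing an admissible pattern from a hook yields a smaller hook. The skew shape $\a_p\setminus\b_s$ is then the L-shape formed by a ``leg'' rectangle on rows $s+1,\ldots,n$ and columns $s+1,\ldots,p$ together with an ``arm'' rectangle on rows $s+1,\ldots,p$ and columns $p+1,\ldots,m$, meeting at the inside corner of $\a_p$. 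Since $\mc{L}(\b_s)|_U=D_s$ and $IC^H_{Z_{\b_s}}(k)|_U$ is pure of weight $|\b_s|-2k=s(m+n-s)-2k$ supported on $\Z_s$, Theorem~\ref{mainthm} specializes to
$$
\big[\lc{\Z_p}^j(\mc{O}_\X)\big]=\sum_{s=0}^{p}N_{p,s,j}\cdot[D_s],\qquad N_{p,s,j}:=\#\{\mathbb{D}\in\mc{A}(\a_p;j) : \a_p^{\mathbb{D}}=\b_s\},
$$
with each $\mathbb{D}\in\mc{A}_r(\a_p;j)$ satisfying $\a_p^{\mathbb{D}}=\b_s$ contributing one copy of $IC^H_{Z_{\b_s}}((|\b_s|-r)/2)|_U$, pure of weight $r=j+mn+|\mathbb{D}|$, to $\tn{gr}^W_r\lc{\Z_p}^j(\mc{O}_\X^H)$.

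Finally, I would enumerate the admissible patterns covering $\a_p\setminus\b_s$. The second combinatorial observation to verify is that every such pattern has exactly $|\mathbb{D}|=p-s$ Dyck paths: each of the $p-s$ columns $s+1,\ldots,p$ that spans both leg and arm must be ``bridged'' by one path in order to satisfy the covering condition. Inserting $|\mathbb{D}|=p-s$ into $r=j+mn+|\mathbb{D}|$ gives $r=mn+p-s+j$, proving part~(2). For part~(1), the remaining degrees of freedom---how far each path's head extends into the arm and its tail into the leg, with surplus boxes of the L-shape absorbed as bullets (one verifies $|\mathbb{B}|=j-(m-p)(n-p)$)---should assemble into a bijection between admissible patterns and nested lattice paths in a $(p-s)\times(n-p-1)$ rectangle, giving the Gaussian binomial
$$
\sum_{j\geq 0}N_{p,s,j}\cdot q^j=q^{(n-p)^2+(n-s)(m-n)}\,\binom{n-s-1}{p-s}_{q^2}.
$$
The main technical obstacle will be this enumeration: constructing the bijection and verifying that the bullet statistic matches the area statistic counted by the $q^2$-binomial, with the covering condition matching the non-crossing condition on lattice paths. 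By contrast, the weight statement (2) follows essentially for free once the uniform path count $|\mathbb{D}|=p-s$ of the second observation has been established.
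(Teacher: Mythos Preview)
Your proposal is correct and follows essentially the same route as the paper: identify $\Z_p$ as the restriction to the opposite big cell $U\subset\operatorname{Gr}(n,m+n)$ of the Schubert variety for the hook partition $\underline{p}=(m^p,p^{n-p})$, apply Theorem~\ref{mainthm}, observe that every admissible pattern on $\underline{p}$ has exactly $p-s$ Dyck paths (giving the weight $mn+p-s+j$ immediately), and then enumerate the bullets to obtain the Gaussian binomial.

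The only substantive difference is in the enumeration step. The paper first strips off the forced $(p-s)\times(m-n)$ rectangle of head bullets (coming from the asymmetry $m\geq n$) to reduce to the square case $m=n$, and then argues directly that the remaining head bullets in rows $s+1,\ldots,p$ form a partition inside a $(p-s)\times(n-p-1)$ box, with the tail bullets determined by transpose symmetry; this makes the $q^2$ and the binomial $\binom{n-s-1}{p-s}_{q^2}$ transparent. Your plan to go straight for a bijection with nested lattice paths is equivalent but would benefit from this same reduction, since it isolates exactly where the free choices live and explains the exponent $(n-s)(m-n)$ as the size of the forced bullet block.
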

Let $X=\operatorname{Gr}(n,m+n)$	 and let $U\subseteq X$ be the opposite big cell with respect to the fixed flag $\mathcal{V}_{\bullet}$. Then $U\cong \X$ and via this identification we have that $\Sv{\underline{p}}|_U=\Z_p$,
where $\underline{p}=(m^p,p^{n-p})$ is the partition obtained from the $n\times m$ rectangle by removing the bottom-right $(n-p)\times (m-p)$ rectangle (see \cite[Chapter 5]{standard}). For instance, when $m=5$ and $n=4$, the following Young diagrams correspond to Schubert varieties in $\operatorname{Gr}(4,9)$ that restrict to the determinantal varieties in $\C^{5\times 4}$:\\

\begin{center}
\begin{minipage}{.18\textwidth}
\centering 
\begin{tikzpicture}[x=\unitsize,y=\unitsize,baseline=0]
\tikzset{vertex/.style={}}%
\tikzset{edge/.style={  thick}}%
\draw[edge] (0,2) -- (10,2);
\draw[edge] (0,4) -- (10,4);
\draw[edge] (0,6) -- (10,6);
\draw[edge] (0,8) -- (10,8);
\draw[edge] (0,10) -- (10,10);
\draw[edge] (0,2) -- (0,10);
\draw[edge] (2,2) -- (2,10);
\draw[edge] (4,2) -- (4,10);
\draw[edge] (6,2) -- (6,10);
\draw[edge] (8,2) -- (8,10);
\draw[edge] (10,2) -- (10,10);
\end{tikzpicture}
\vspace{-.5cm}
\captionsetup{labelformat=empty}	
\captionof{figure}{$\Z_4$}	
\end{minipage}
\begin{minipage}{.18\textwidth}
\centering 
\begin{tikzpicture}[x=\unitsize,y=\unitsize,baseline=0]
\tikzset{vertex/.style={}}%
\tikzset{edge/.style={  thick}}%
\draw[edge] (0,2) -- (6,2);
\draw[edge] (0,4) -- (10,4);
\draw[edge] (0,6) -- (10,6);
\draw[edge] (0,8) -- (10,8);
\draw[edge] (0,10) -- (10,10);
\draw[edge] (0,2) -- (0,10);
\draw[edge] (2,2) -- (2,10);
\draw[edge] (4,2) -- (4,10);
\draw[edge] (6,2) -- (6,10);
\draw[edge] (8,4) -- (8,10);
\draw[edge] (10,4) -- (10,10);
\draw[dotted] (10,2) -- (10,10);
\draw[dotted] (0,2) -- (10,2);
\end{tikzpicture}
\vspace{-.5cm}
\captionsetup{labelformat=empty}	
\captionof{figure}{$\Z_3$}	
\end{minipage}
\begin{minipage}{.18\textwidth}
\centering 
\begin{tikzpicture}[x=\unitsize,y=\unitsize,baseline=0]
\tikzset{vertex/.style={}}%
\tikzset{edge/.style={  thick}}%
\draw[edge] (0,2) -- (4,2);
\draw[edge] (0,4) -- (4,4);
\draw[edge] (0,6) -- (10,6);
\draw[edge] (0,8) -- (10,8);
\draw[edge] (0,10) -- (10,10);
\draw[edge] (0,2) -- (0,10);
\draw[edge] (2,2) -- (2,10);
\draw[edge] (4,2) -- (4,10);
\draw[edge] (6,6) -- (6,10);
\draw[edge] (8,6) -- (8,10);
\draw[edge] (10,6) -- (10,10);
\draw[dotted] (10,2) -- (10,10);
\draw[dotted] (0,2) -- (10,2);
\end{tikzpicture}
\vspace{-.5cm}
\captionsetup{labelformat=empty}	
\captionof{figure}{$\Z_2$}	
\end{minipage}
\begin{minipage}{.18\textwidth}
\centering 
\begin{tikzpicture}[x=\unitsize,y=\unitsize,baseline=0]
\tikzset{vertex/.style={}}%
\tikzset{edge/.style={  thick}}%
\draw[edge] (0,2) -- (2,2);
\draw[edge] (0,4) -- (2,4);
\draw[edge] (0,6) -- (2,6);
\draw[edge] (0,8) -- (10,8);
\draw[edge] (0,10) -- (10,10);
\draw[edge] (0,2) -- (0,10);
\draw[edge] (2,2) -- (2,10);
\draw[edge] (4,8) -- (4,10);
\draw[edge] (6,8) -- (6,10);
\draw[edge] (8,8) -- (8,10);
\draw[edge] (10,8) -- (10,10);
\draw[dotted] (10,2) -- (10,10);
\draw[dotted] (0,2) -- (10,2);
\end{tikzpicture}	
\vspace{-.5cm}
\captionsetup{labelformat=empty}	
\captionof{figure}{$\Z_1$}
\end{minipage}
\begin{minipage}{.18\textwidth}
\centering 
\begin{tikzpicture}[x=\unitsize,y=\unitsize,baseline=0]
\tikzset{vertex/.style={}}%
\tikzset{edge/.style={  thick}}%
\draw[dotted] (0,10) -- (10,10);
\draw[dotted] (0,2) -- (0,10);
\draw[dotted] (10,2) -- (10,10);
\draw[dotted] (0,2) -- (10,2);
\end{tikzpicture}
\vspace{-.5cm}
\captionsetup{labelformat=empty}	
\captionof{figure}{$\Z_0$}
\end{minipage}
\end{center}

\medskip

Furthermore, via this identification, we have that 
\begin{equation}
\mathcal{L}(\Sc{\underline{p}},X)|_U=D_p,\quad IC^H_{\Sv{\underline{p}}}|_U\cong IC^H_{\Z_p},\quad \mathscr{H}^j_{\Sv{\underline{p}}}(\mathcal{O}_X^H)|_U\cong H^j_{\Z_p}(\X,\mc{O}_{\X}^H).
\end{equation}

We observe that the only elements of $Dyck^{\bullet}(\underline{p})$ consisting of paths of length $\geq 3$ are obtained by removing hooks supported on the inside corner of $\underline{p}$. For instance, continuing the above example, we obtain the following elements of $Dyck^{\bullet}(\underline{2})$ when $m=5$ and $n=4$:\\

\begin{center}
\begin{minipage}{.18\textwidth}
\centering 
\begin{tikzpicture}[x=\unitsize,y=\unitsize,baseline=0]
\tikzset{vertex/.style={}}%
\tikzset{edge/.style={  thick}}%
\draw[edge] (0,2) -- (4,2);
\draw[edge] (0,4) -- (4,4);
\draw[edge] (0,6) -- (10,6);
\draw[edge] (0,8) -- (10,8);
\draw[edge] (0,10) -- (10,10);
\draw[edge] (10,6) -- (10,10);
\draw[edge] (0,2) -- (0,10);
\draw[edge] (2,2) -- (2,10);
\draw[edge] (4,2) -- (4,10);
\draw[edge] (6,6) -- (6,10);
\draw[edge] (8,6) -- (8,10);
\draw[dotted] (10,2) -- (10,10);
\draw[dotted] (0,2) -- (10,2);
\end{tikzpicture}
\vspace{-.5cm}
\captionsetup{labelformat=empty}	
\captionof{figure}{$D_2$}	
\end{minipage}
\quad\quad
\begin{minipage}{.18\textwidth}
\centering 
\begin{tikzpicture}[x=\unitsize,y=\unitsize,baseline=0]
\tikzset{vertex/.style={}}%
\tikzset{edge/.style={  thick}}%
\draw[edge] (0,2) -- (4,2);
\draw[edge] (0,4) -- (4,4);
\draw[edge] (0,6) -- (10,6);
\draw[edge] (0,8) -- (10,8);
\draw[edge] (0,10) -- (10,10);
\draw[edge] (0,2) -- (0,10);
\draw[edge] (2,2) -- (2,10);
\draw[edge] (4,2) -- (4,10);
\draw[edge] (6,6) -- (6,10);
\draw[edge] (8,6) -- (8,10);
\draw[edge] (10,6) -- (10,10);
\draw[red, line width=6pt]  (3,2) -- (3,7) -- (8,7);
\draw[fill=green] (9,7) circle [radius=0.5];
\draw[dotted] (10,2) -- (10,10);
\draw[dotted] (0,2) -- (10,2);
\end{tikzpicture}
\vspace{-.5cm}
\captionsetup{labelformat=empty}	
\captionof{figure}{$D_1$}
\end{minipage}
\quad\quad
\begin{minipage}{.18\textwidth}
\centering 
\begin{tikzpicture}[x=\unitsize,y=\unitsize,baseline=0]
\tikzset{vertex/.style={}}%
\tikzset{edge/.style={  thick}}%
\draw[edge] (0,2) -- (4,2);
\draw[edge] (0,4) -- (4,4);
\draw[edge] (0,6) -- (10,6);
\draw[edge] (0,8) -- (10,8);
\draw[edge] (0,10) -- (10,10);
\draw[edge] (0,2) -- (0,10);
\draw[edge] (2,2) -- (2,10);
\draw[edge] (4,2) -- (4,10);
\draw[edge] (6,6) -- (6,10);
\draw[edge] (8,6) -- (8,10);
\draw[edge] (10,6) -- (10,10);
\draw[red, line width=6pt]  (3,2) -- (3,7) -- (8,7);
\draw[red, line width=6pt]  (1,2) -- (1,9) -- (8,9);
\draw[fill=green] (9,7) circle [radius=0.5];
\draw[fill=green] (9,9) circle [radius=0.5];
\draw[dotted] (10,2) -- (10,10);
\draw[dotted] (0,2) -- (10,2);
\end{tikzpicture}
\vspace{-.5cm}
\captionsetup{labelformat=empty}	
\captionof{figure}{$D_0$}
\end{minipage}

\medskip
\medskip
\begin{minipage}{.18\textwidth}
\centering 
\begin{tikzpicture}[x=\unitsize,y=\unitsize,baseline=0]
\tikzset{vertex/.style={}}%
\tikzset{edge/.style={  thick}}%
\draw[edge] (0,2) -- (4,2);
\draw[edge] (0,4) -- (4,4);
\draw[edge] (0,6) -- (10,6);
\draw[edge] (0,8) -- (10,8);
\draw[edge] (0,10) -- (10,10);
\draw[edge] (0,2) -- (0,10);
\draw[edge] (2,2) -- (2,10);
\draw[edge] (4,2) -- (4,10);
\draw[edge] (6,6) -- (6,10);
\draw[edge] (8,6) -- (8,10);
\draw[edge] (10,6) -- (10,10);
\draw[red, line width=6pt]  (3,4) -- (3,7) -- (6,7);
\draw[fill=green] (3,3) circle [radius=0.5];
\draw[fill=green] (7,7) circle [radius=0.5];
\draw[fill=green] (9,7) circle [radius=0.5];
\draw[dotted] (10,2) -- (10,10);
\draw[dotted] (0,2) -- (10,2);
\end{tikzpicture}
\vspace{-.5cm}
\captionsetup{labelformat=empty}	
\captionof{figure}{$D_1$}	
\end{minipage}
\quad\quad
\begin{minipage}{.18\textwidth}
\centering 
\begin{tikzpicture}[x=\unitsize,y=\unitsize,baseline=0]
\tikzset{vertex/.style={}}%
\tikzset{edge/.style={  thick}}%
\draw[edge] (0,2) -- (4,2);
\draw[edge] (0,4) -- (4,4);
\draw[edge] (0,6) -- (10,6);
\draw[edge] (0,8) -- (10,8);
\draw[edge] (0,10) -- (10,10);
\draw[edge] (0,2) -- (0,10);
\draw[edge] (2,2) -- (2,10);
\draw[edge] (4,2) -- (4,10);
\draw[edge] (6,6) -- (6,10);
\draw[edge] (8,6) -- (8,10);
\draw[edge] (10,6) -- (10,10);
\draw[red, line width=6pt]  (3,4) -- (3,7) -- (6,7);
\draw[red, line width=6pt]  (1,2) -- (1,9) -- (8,9);
\draw[fill=green] (3,3) circle [radius=0.5];
\draw[fill=green] (7,7) circle [radius=0.5];
\draw[fill=green] (9,7) circle [radius=0.5];
\draw[fill=green] (9,9) circle [radius=0.5];
\draw[dotted] (10,2) -- (10,10);
\draw[dotted] (0,2) -- (10,2);
\end{tikzpicture}
\vspace{-.5cm}
\captionsetup{labelformat=empty}	
\captionof{figure}{$D_0$}	
\end{minipage}
\quad\quad
\begin{minipage}{.18\textwidth}
\centering 
\begin{tikzpicture}[x=\unitsize,y=\unitsize,baseline=0]
\tikzset{vertex/.style={}}%
\tikzset{edge/.style={  thick}}%
\draw[edge] (0,2) -- (4,2);
\draw[edge] (0,4) -- (4,4);
\draw[edge] (0,6) -- (10,6);
\draw[edge] (0,8) -- (10,8);
\draw[edge] (0,10) -- (10,10);
\draw[edge] (0,2) -- (0,10);
\draw[edge] (2,2) -- (2,10);
\draw[edge] (4,2) -- (4,10);
\draw[edge] (6,6) -- (6,10);
\draw[edge] (8,6) -- (8,10);
\draw[edge] (10,6) -- (10,10);
\draw[red, line width=6pt]  (3,4) -- (3,7) -- (6,7);
\draw[red, line width=6pt]  (1,4) -- (1,9) -- (6,9);
\draw[fill=green] (3,3) circle [radius=0.5];
\draw[fill=green] (7,7) circle [radius=0.5];
\draw[fill=green] (1,3) circle [radius=0.5];
\draw[fill=green] (7,9) circle [radius=0.5];
\draw[fill=green] (9,7) circle [radius=0.5];
\draw[fill=green] (9,9) circle [radius=0.5];
\draw[dotted] (10,2) -- (10,10);
\draw[dotted] (0,2) -- (10,2);
\end{tikzpicture}
\vspace{-.5cm}
\captionsetup{labelformat=empty}	
\captionof{figure}{$D_0$}	
\end{minipage}

\end{center}

\medskip

In particular, by Theorem \ref{mainthm}, the multiplicity of $D_s$ in $H^{(m-p)(n-p)+j}_{\Z_p}(\X,\mc{O}_{\X})$ is the number of ways to remove $p-s$ hooks from $\underline{p}$ with $j$ bullets. Since said hooks must start and end on the same antidiagonal, and $m\geq n$, there will always be a $(p-s)\times (m-n)$ rectangle of bullets in rows $s+1,\cdots,p$ of $\underline{p}$ and columns $n+1,\cdots,m$ (these are the bullets depicted in the fifth columns of the patterns above). Thus, we may reduce to the case $m=n$, and assume that $\underline{p}$ and its Dyck patterns are symmetric with respect to the operation of transposing the diagram.

Let $m=n$ and let $0\leq p\leq n$. Let $\mathbb{D}\in Dyck^{\bullet}(\underline{p})$ consist of paths of length $\geq 3$ and set $\underline{s}=\underline{p}^{\mathbb{D}}$ for $0\leq s\leq p$. The union of the support of the paths in $\mathbb{D}$ will contain all boxes $(x,y)$ such that $s+1\leq x,y\leq p+1$ (except the box $(p+1,p+1)$, that lies just outside the inside corner of $\underline{p}$). Due to the covering condition for Dyck patterns, the number of bullets in row $s+i$ must be less than or equal to the number of bullets in row $s+i+1$ for $1\leq i\leq p-1$. Thus, there is a partition shape of bullets in rows $s+1,\cdots,p$, and the transpose partition of bullets in columns $s+1,\cdots,p$. 

Here is a picture when $m=n=9$, $p=4$, and $s=1$:\\ 

\begin{center}
\begin{minipage}{.18\textwidth}
\centering 
\begin{tikzpicture}[x=\unitsize,y=\unitsize,baseline=0]
\tikzset{vertex/.style={}}%
\tikzset{edge/.style={  thick}}%
\draw[edge] (0,18) -- (18,18);
\draw[edge] (0,16) -- (18,16);
\draw[edge] (0,14) -- (18,14);
\draw[edge] (0,12) -- (18,12);
\draw[edge] (0,10) -- (18,10);
\draw[edge] (0,8) -- (8,8);
\draw[edge] (0,6) -- (8,6);
\draw[edge] (0,4) -- (8,4);
\draw[edge] (0,2) -- (8,2);
\draw[edge] (0,0) -- (8,0);

\draw[edge] (0,0) -- (0,18);
\draw[edge] (2,0) -- (2,18);
\draw[edge] (4,0) -- (4,18);
\draw[edge] (6,0) -- (6,18);
\draw[edge] (8,0) -- (8,18);
\draw[edge] (10,10) -- (10,18);
\draw[edge] (12,10) -- (12,18);
\draw[edge] (14,10) -- (14,18);
\draw[edge] (16,10) -- (16,18);
\draw[edge] (18,10) -- (18,18);

\draw[dotted] (0,0) -- (18,0);
\draw[dotted] (18,0) -- (18,18);

\draw[red, line width=6pt] (3,2) -- (3,15) -- (16,15);
\draw[red, line width=6pt] (5,6) -- (5,13) -- (12,13);
\draw[red, line width=6pt] (7,8) -- (7,11) -- (10,11);

\draw[fill=green] (3,1) circle [radius=0.5];
\draw[fill=green] (5,1) circle [radius=0.5];
\draw[fill=green] (7,1) circle [radius=0.5];

\draw[fill=green] (5,3) circle [radius=0.5];
\draw[fill=green] (7,3) circle [radius=0.5];

\draw[fill=green] (5,5) circle [radius=0.5];
\draw[fill=green] (7,5) circle [radius=0.5];

\draw[fill=green] (7,7) circle [radius=0.5];

\draw[fill=green] (17,15) circle [radius=0.5];
\draw[fill=green] (17,13) circle [radius=0.5];
\draw[fill=green] (17,11) circle [radius=0.5];

\draw[fill=green] (15,13) circle [radius=0.5];
\draw[fill=green] (15,11) circle [radius=0.5];

\draw[fill=green] (13,13) circle [radius=0.5];
\draw[fill=green] (13,11) circle [radius=0.5];

\draw[fill=green] (11,11) circle [radius=0.5];

\end{tikzpicture}
\end{minipage}
\end{center}

\medskip

The partition formed by the bullets is $(4,3,1)$. We conclude that, for $m=n$, the multiplicity of $D_s$ in $H^{(n-p)^2+2j}_{\Z_p}(\X,\mc{O}_{\X})$ is the number of partitions of size $j$ that fit inside a $(p-s)\times (n-p-1)$ rectangle, which is the coefficient of $q^j$ in the Gaussian binomial coefficient $\binom{n-s-1}{p-s}_q$.

Putting it all together, we obtain for $m\geq n$ the following multiplicity formula:
$$
\sum_{j \geq 0}\big[ H^j_{\Z_p}(\X,\mc{O}_{\X}): D_s \big] \cdot q^j=q^{(m-p)(n-p)+(p-s)(m-n)}\cdot \binom{n-s-1}{p-s}_{q^2}.
$$
Since $(m-p)(n-p)+(p-s)(m-n)=(n-p)^2+(n-s)(m-n)$, we recover Theorem \ref{thm:detthm}(1). 

By Theorem \ref{mainthm}, the weight of $D_s=\mc{L}(\underline{p}^{\mathbb{D}})|_U$ in $H^j_{\Z_p}(\X,\mc{O}_{\X}^H)$ is $mn+|\mathbb{D}|+j=mn+p-s+j$, recovering Theorem \ref{thm:detthm}(2).

\section*{Acknowledgments}

We are grateful to Sam Evens, Andr\'{a}s L\H{o}rincz, Claudiu Raicu, and Vic Reiner for advice and useful conversations regarding this project. We thank anonymous referees for numerous helpful comments.

\begin{bibdiv}
\begin{biblist}

\bib{BB}{article}{
   title={Localisation de $\mathfrak{g}$-modules},
   author={Beilinson, A.},
   author={Bernstein, J.},
   journal={C. R. Acad. Sci. Paris S\'{e}r. I Math.},
   volume={292},
   number={1},
   year={1981},
   pages={15--18}
}

\bib{jantzen}{article}{
   author={Beilinson, A.},
   author={Bernstein, J.},
   title={A proof of Jantzen conjectures},
   conference={
      title={I. M. Gelfand Seminar},
   },
   book={
      series={Adv. Soviet Math.},
      volume={16, Part 1},
      publisher={Amer. Math. Soc., Providence, RI},
   },
   isbn={0-8218-4118-1},
   date={1993},
   pages={1--50},
}

\bib{Boe}{article}{
   author={Boe, Brian D.},
   title={Kazhdan-Lusztig polynomials for Hermitian symmetric spaces},
   journal={Trans. Amer. Math. Soc.},
   volume={309},
   date={1988},
   number={1},
   pages={279--294},
   issn={0002-9947},
}

\bib{intertwining}{article}{
   author={Boe, Brian D.},
   author={Enright, Thomas J.},
   author={Shelton, Brad},
   title={Determination of the intertwining operators for holomorphically
   induced representations of Hermitian symmetric pairs},
   journal={Pacific J. Math.},
   volume={131},
   date={1988},
   number={1},
   pages={39--50},
   issn={0030-8730},
}

\bib{beMorphisms}{article}{
 title={Determination of the interwining operators for holomorphically induced representations of $SU (p, q)$},
   author={Boe, Brian D.},
  author={Enright, Thomas J.},
  journal={Mathematische Annalen},
  volume={275},
  number={3},
  pages={401--404},
  year={1986},
  publisher={Springer}
}

\bib{borhoBry}{article}{
  title={Differential operators on homogeneous spaces. III},
  author={Borho, Walter},
  author={Brylinski, J-L},
  journal={Inventiones Mathematicae},
  volume={80},
  number={1},
  pages={1--68},
  year={1985},
  publisher={Springer}
}

\bib{brenti2}{article}{
   author={Brenti, Francesco},
   title={Kazhdan-Lusztig and $R$-polynomials, Young's lattice, and Dyck
   partitions},
   journal={Pacific J. Math.},
   volume={207},
   date={2002},
   number={2},
   pages={257--286},
   issn={0030-8730},
}

\bib{brenti}{article}{
   author={Brenti, Francesco},
   title={Parabolic Kazhdan-Lusztig polynomials for Hermitian symmetric
   pairs},
   journal={Trans. Amer. Math. Soc.},
   volume={361},
   date={2009},
   number={4},
   pages={1703--1729},
   issn={0002-9947},
}

\bib{CC}{article}{
   author={Casian, Luis G.},
   author={Collingwood, David H.},
   title={The Kazhdan-Lusztig conjecture for generalized Verma modules},
   journal={Math. Z.},
   volume={195},
   date={1987},
   number={4},
   pages={581--600},
   issn={0025-5874},
}

\bib{dualities}{book}{
   author={Cheng, Shun-Jen},
   author={Wang, Weiqiang},
   title={Dualities and representations of Lie superalgebras},
   series={Graduate Studies in Mathematics},
   volume={144},
   publisher={American Mathematical Society, Providence, RI},
   date={2012},
   pages={xviii+302},
   isbn={978-0-8218-9118-6},
}


\bib{cisWeight}{article}{
  title={Filtrations on generalized Verma modules for Hermitian symmetric pairs},
  author={Collingwood, David H.},
  author={Irving, Ronald S},
  author={Shelton, Brad},
  year={1988},
  journal={J. Reine Angew. Math.},
  volume={383},
  pages={54--86}
}

\bib{davis}{article}{
  title={Unitary representations of real groups and localization theory for Hodge modules},
  author={Davis, Dougal},
  author={Vilonen, Kari},
  journal={arXiv preprint arXiv:2309.13215},
  year={2023}
}

\bib{DEP}{article}{
   author={de Concini, Corrado},
   author={Eisenbud, David},
   author={Procesi, Claudio},
   title={Young diagrams and determinantal varieties},
   journal={Invent. Math.},
   volume={56},
   date={1980},
   number={2},
   pages={129--165},
   issn={0020-9910},
}

\bib{diagrams}{article}{
   author={Enright, Thomas J.},
   author={Hunziker, Markus},
   author={Pruett, W. Andrew},
   title={Diagrams of Hermitian type, highest weight modules, and syzygies
   of determinantal varieties},
   conference={
      title={Symmetry: representation theory and its applications},
   },
   book={
      series={Progr. Math.},
      volume={257},
      publisher={Birkh\"{a}user/Springer, New York},
   },
   date={2014},
   pages={121--184},
}


\bib{holland}{article}{
  title={K-theory of twisted differential operators on flag varieties},
  author={Holland, Martin P.},
  author={Polo, Patrick},
  journal={Inventiones Mathematicae},
  volume={123},
  number={1},
  pages={377--414},
  year={1996},
  publisher={Springer}
}

\bib{htt}{book}{
   author={Hotta, Ryoshi},
   author={Takeuchi, Kiyoshi},
   author={Tanisaki, Toshiyuki},
   title={$D$-modules, perverse sheaves, and representation theory},
   series={Progress in Mathematics},
   volume={236},
   note={Translated from the 1995 Japanese edition by Takeuchi},
   publisher={Birkh\"{a}user Boston, Inc., Boston, MA},
   date={2008},
   pages={xii+407},
}

\bib{amy}{article}{
 title={Syzygies of Determinantal Thickenings},
  author={Huang, Hang},
  journal={arXiv preprint arXiv:2008.02690},
  year={2020}
}

\bib{humphreys}{book}{
  title={Representations of Semisimple Lie Algebras in the BGG Category $\mathcal{O}$},
  author={Humphreys, James E},
  series={Graduate Studies in Mathematics},
  volume={94},
  year={2008},
  publisher={AMS}
}

\bib{irvingBook}{book}{
  title={A Filtered Category $\mathcal{O}_S$ and Applications},
  author={Irving, Ronald S.},
  volume={83, no. 419},
  number={419},
  year={1990},
  series={Memoirs of the American Mathematical Society}
}

\bib{KL}{article}{
   author={Kashiwara, Masaki},
   author={Lauritzen, Niels},
   title={Local cohomology and $D$-affinity in positive characteristic},
   language={English, with English and French summaries},
   journal={C. R. Math. Acad. Sci. Paris},
   volume={335},
   date={2002},
   number={12},
   pages={993--996},
   issn={1631-073X},
}

\bib{KT}{article}{
   author={Kashiwara, Masaki},
   author={Tanisaki, Toshiyuki},
   title={Parabolic Kazhdan-Lusztig polynomials and Schubert varieties},
   journal={J. Algebra},
   volume={249},
   date={2002},
   number={2},
   pages={306--325},
   issn={0021-8693},

}

\bib{kempf}{article}{
   author={Kempf, George},
   title={The Grothendieck-Cousin complex of an induced representation},
   journal={Adv. in Math.},
   volume={29},
   date={1978},
   number={3},
   pages={310--396},
   issn={0001-8708},
}

\bib{standard}{book}{
   author={Lakshmibai, V.},
   author={Raghavan, K. N.},
   title={Standard monomial theory},
   series={Encyclopaedia of Mathematical Sciences},
   volume={137},
   note={Invariant theoretic approach;
   Invariant Theory and Algebraic Transformation Groups, 8},
   publisher={Springer-Verlag, Berlin},
   date={2008},
   pages={xiv+265},
}

\bib{lak}{article}{
   author={Lakshmibai, V.},
   author={Weyman, J.},
   title={Multiplicities of points on a Schubert variety in a minuscule
   $G/P$},
   journal={Adv. Math.},
   volume={84},
   date={1990},
   number={2},
   pages={179--208},
   issn={0001-8708},
}

\bib{lascoux}{article}{
   author={Lascoux, Alain},
   author={Sch\"{u}tzenberger, Marcel-Paul},
   title={Polyn\^{o}mes de Kazhdan \& Lusztig pour les grassmanniennes},
   language={French},
   conference={
      title={Young tableaux and Schur functors in algebra and geometry},
      address={Toru\'{n}},
      date={1980},
   },
   book={
      series={Ast\'{e}risque},
      volume={87-88},
      publisher={Soc. Math. France, Paris},
   },
   date={1981},
   pages={249--266},
}

\bib{lauritzen}{article}{
   author={Lauritzen, Niels},
   author={Raben-Pedersen, Ulf},
   author={Thomsen, Jesper Funch},
   title={Global $F$-regularity of Schubert varieties with applications to
   $\mathcal{D}$-modules},
   journal={J. Amer. Math. Soc.},
   volume={19},
   date={2006},
   number={2},
   pages={345--355},
   issn={0894-0347},
}

\bib{iterated}{article}{
   author={L\H{o}rincz, Andr\'{a}s C.},
   author={Raicu, Claudiu},
   title={Iterated local cohomology groups and Lyubeznik numbers for
   determinantal rings},
   journal={Algebra Number Theory},
   volume={14},
   date={2020},
   number={9},
   pages={2533--2569},
   issn={1937-0652},
}

\bib{categories}{article}{
    AUTHOR = {L\H{o}rincz, Andr\'{a}s C.}
    author= {Walther, Uli},
     TITLE = {On categories of equivariant {$D$}-modules},
   JOURNAL = {Adv. Math.},
  FJOURNAL = {Advances in Mathematics},
    VOLUME = {351},
      YEAR = {2019},
     PAGES = {429--478},
      ISSN = {0001-8708},
   MRCLASS = {14F10 (14L30 14M27 16G20)},
  MRNUMBER = {3952575},
MRREVIEWER = {P. E. Newstead},
}

\bib{MP1}{article}{
    AUTHOR = {Musta\c{t}\u{a}, Mircea},
    author= {Popa, Mihnea},
     TITLE = {Hodge ideals},
   JOURNAL = {Mem. Amer. Math. Soc.},
  FJOURNAL = {Memoirs of the American Mathematical Society},
    VOLUME = {262},
      YEAR = {2019},
    NUMBER = {1268},
      ISSN = {0065-9266},
      ISBN = {978-1-4704-3781-7; 978-1-4704-5509-5},
   MRCLASS = {14D07 (14F17 14J17 32S25)},
  MRNUMBER = {4044463},
MRREVIEWER = {Matthias Wendt},
       URL = {https://doi-org.proxy.queensu.ca/10.1090/memo/1268},
}

\bib{MP4}{article}{
   author={Musta\c{t}\u{a}, Mircea},
   author={Popa, Mihnea},
   title={Hodge filtration on local cohomology, Du Bois complex and local
   cohomological dimension},
   journal={Forum Math. Pi},
   volume={10},
   date={2022},
   pages={Paper No. e22, 58},
}

\bib{MHM}{article}{
  author={Perlman, Michael},
   title={Mixed Hodge structure on local cohomology with support in
   determinantal varieties},
   journal={Int. Math. Res. Not. IMRN},
   date={2024},
   number={1},
   pages={331--358},
   issn={1073-7928},
}

\bib{perlmanraicu}{article}{
  title={Hodge ideals for the determinant hypersurface},
  author={Perlman, Michael},
  author={Raicu, Claudiu},
  journal={Selecta Mathematica New Series},
  volume={27},
  number={1},
  pages={1--22},
  year={2021},
  publisher={Springer}
}

\bib{peterssteen}{book}{
   author={Peters, Chris A. M.},
   author={Steenbrink, Joseph H. M.},
   title={Mixed Hodge structures},
   series={Ergebnisse der Mathematik und ihrer Grenzgebiete. 3. Folge. A
   Series of Modern Surveys in Mathematics},
   volume={52},
   publisher={Springer-Verlag, Berlin},
   date={2008},
   pages={xiv+470},
   }
   
 \bib{raben}{article}{
   author={Raben-Pedersen, Ulf},
   title={Local Cohomology of Schubert Varieties},
   journal={Ph.D. Thesis, Aarhus University},
   year={2005}
}

\bib{characters}{article}{
   author={Raicu, Claudiu},
   title={Characters of equivariant $\mathcal{D}$-modules on spaces of matrices},
   journal={Compos. Math.},
   volume={152},
   date={2016},
   number={9},
   pages={1935--1965},
   issn={0010-437X},
}

\bib{rw}{article}{
   author={Raicu, Claudiu},
   author={Weyman, Jerzy},
   title={Local cohomology with support in generic determinantal ideals},
   journal={Algebra Number Theory},
   volume={8},
   date={2014},
   number={5},
   pages={1231--1257},
   issn={1937-0652},
}

\bib{raicu2016local}{article}{
  title={Local cohomology with support in ideals of symmetric minors and {P}faffians},
  author={Raicu, Claudiu},
  author={Weyman, Jerzy},
  journal={Journal of the London Mathematical Society},
  volume={94},
  number={3},
  pages={709--725},
  year={2016},
  publisher={Oxford University Press}
}  

\bib{thick}{article}{
   author={Raicu, Claudiu},
   author={Weyman, Jerzy},
   title={Syzygies of determinantal thickenings and representations of the
   general linear Lie superalgebra},
   journal={Acta Math. Vietnam.},
   volume={44},
   date={2019},
   number={1},
   pages={269--284},
   issn={0251-4184},
} 

\bib{witt}{article}{
   author={Raicu, Claudiu},
   author={Weyman, Jerzy},
   author={Witt, Emily E.},
   title={Local cohomology with support in ideals of maximal minors and
   sub-maximal Pfaffians},
   journal={Adv. Math.},
   volume={250},
   date={2014},
   pages={596--610},
   issn={0001-8708},
   }

\bib{normal}{article}{
   author={Ramanan, S.},
   author={Ramanathan, A.},
   title={Projective normality of flag varieties and Schubert varieties},
   journal={Invent. Math.},
   volume={79},
   date={1985},
   number={2},
   pages={217--224},
   issn={0020-9910},
}

\bib{saito89}{article}{
   author={Saito, Morihiko},
   title={Modules de Hodge polarisables},
   language={French},
   journal={Publ. Res. Inst. Math. Sci.},
   volume={24},
   date={1988},
   number={6},
   pages={849--995 (1989)},
   issn={0034-5318},
  }

\bib{saito90}{article}{
   author={Saito, Morihiko},
   title={Mixed Hodge modules},
   journal={Publ. Res. Inst. Math. Sci.},
   volume={26},
   date={1990},
   number={2},
   pages={221--333},
   issn={0034-5318},

}

\bib{path}{article}{
  title={Path representation of maximal parabolic Kazhdan--Lusztig polynomials},
  author={Shigechi, Keiichi}, 
  author={Zinn-Justin, Paul},
  journal={Journal of Pure and Applied Algebra},
  volume={216},
  number={11},
  pages={2533--2548},
  year={2012},
  publisher={Elsevier}
}

\bib{super}{article}{
  title={Generalised Jantzen filtration of Lie superalgebras I},
  author={Su, Yucai},
  author={Zhang, Ruibin},
  journal={Journal of the European Mathematical Society},
  volume={14},
  number={4},
  pages={1103--1133},
  year={2012}
}

\end{biblist}
\end{bibdiv}

\end{document}